\numberwithin{equation}{section}
\newtheorem{theorem}{Theorem}[section]
\newtheorem{corollary}{Corollary}
\newtheorem{lemma}[theorem]{Lemma}
\newtheorem{proposition}[theorem]{Proposition}
\theoremstyle{definition}
\newtheorem{remark}{Remark}
\newcommand{\RR}{\mathbb{R}}
\newcommand{\pd}{\partial}
\font \dsrom=dsrom10 scaled 1200
\def \indic{\textrm{\dsrom{1}}}
\title[Stability of infinite time aggregation] 
      {Stability of infinite time aggregation for the Patlak-Keller-Segel equation}
\author[T. Ghoul and N. Masmoudi]{}
\subjclass{Primary: 35K58, 35K55; Secondary: 35B40, 35B44.}
 \keywords{Infinite-time blow-up, collapse , asymptotic behavior, blow-up profile, Patlak Keller Segel system.}
 \email{masmoudi@cims.nyu.edu}
 \email{teg6@nyu.edu}
\begin{document}
\maketitle

\centerline{\scshape Tej-Eddine Ghoul}
\medskip
{\footnotesize
 \centerline{Department of Mathematics, New York University in Abu Dhabi,}
   \centerline{Saadiyat Island, P.O. Box 129188, Abu Dhabi, United Arab Emirates.}
}

\medskip

\centerline{\scshape Nader Masmoudi}
\medskip
{\footnotesize
  \centerline{Department of Mathematics, New York University in Abu Dhabi,}
   \centerline{Saadiyat Island, P.O. Box 129188, Abu Dhabi, United Arab Emirates.}
	\centerline{Courant Institute of Mathematical Science, New York University,}
   \centerline{251 Mercer Street, New York, NY 10012, USA.}
}

\bigskip

\begin{abstract} We consider the parabolic-elliptic Patlak-Keller-Segel (PKS) model of chemotactic aggregation in two space dimensions which describes the aggregation of bacteria under chemo-taxis. When the mass is equal to $8\pi$ and the second moment is finite (the doubly critical case), we give a precise description of the dynamic as time goes to infinity and extract the limiting profile and speed. The proof shows that this dynamic is stable under perturbations.
\end{abstract}

\section{Introdution}
We consider the two dimensional Patlak-Keller-Segel system,
\begin{eqnarray}\label{PKellerseguel}
\left\{
\begin{array}{ll}
\partial_t u(x,t) -\Delta u(x,t)&=-\nabla\cdot\bigl(u(x,t)\nabla c(x,t)\bigr) \\
-\Delta c(x,t) &=  u(x,t)\\
u(x,t=0)&=u_0\geq 0,
\end{array}
\right.
\end{eqnarray}
where $x\in\RR^2$ and $t>0$.
This system is generally considered the fundamental mathematical model for the study of aggregation by chemotaxis of certain microorganisms \cite{Patlak,KS,Hortsmann,HandP}. From now on we will refer to \eqref{PKellerseguel} as Patlak-Keller-Segel (PKS).
The first equation describes the motion of the microorganism ($u$ represents the density of cells) as a random walk with drift up the gradient of the \emph{chemo-attractant} $c$. The second equation describes the production and (instantaneous) diffusion of the chemo-attractant. 
PKS and related variants have received considerable mathematical attention over the years, for example, see the review \cite{Hortsmann} or some of the following representative works \cite{ChildressPercus81,JagerLuckhaus92,Nagai95,Biler95,HerreroVelazquez96,Senba02,Biler06,BlanchetEJDE06,Senba07,Blanchet08,BlanchetCalvezCarrillo08,BlanchetCarlenCarrillo10}. The equation is $L^{\frac{d}{2}}$ critical in dimension d, hence it is mass or $L^1$ critical in dimension 2, namely: if $u(t,x)$ is a solution to \eqref{PKellerseguel} then for all $a \in (0,\infty)$, so is
\begin{equation*}
u_a(t,x) = \frac{1}{a^2}u\left(\frac{t}{a^2}, \frac{x}{a}\right). 
\end{equation*}
Moreover, if $M=8\pi$ then the second moment is also conserved. This justifies the terminology doubly critical if $M=8\pi$ and yields very inetresting dynamical properties.
It has been known for some time that  \eqref{PKellerseguel} possesses a critical mass: if $\|{u_0}\|_1 \leq 8\pi$ then classical solutions exist for all time (see e.g. \cite{Senba02,BlanchetEJDE06,Blanchet08,BlanchetCarlenCarrillo10,BlanchetCalvezCarrillo08}) and if $\|u_0\|_1 > 8\pi$ then all classical solutions with finite second moment blow up in finite time \cite{JagerLuckhaus92,Nagai95,BlanchetEJDE06} and are known to concentrate at least $8\pi$ mass into a single point at blow-up \cite{Senba02} (see also \cite{HerreroVelazquez96,Senba07}). 
Another important property of \eqref{PKellerseguel} that plays a decisive role in our work is the existence (and uniqueness) of self-similar spreading solutions for all mass $M \in (0,8\pi)$. These are known to be global attractors for the dynamics if the total mass is less than $8\pi$ \cite{BlanchetEJDE06} and for the purposes of our analysis.
The mass is conserved in \eqref{PKellerseguel}:
\begin{equation}\label{consmass}
\frac{d}{dt}\int_{\RR^2}u(x,t)dx=0,
\end{equation}
then we set 
$$M_0=\int_{\RR^2}u_0(x)dx.$$
Notice that the center of mass is also conserved
$$\frac{d}{dt}\int_{\RR^2}x_iu(x,t)dx=0 \mbox{ for } i\in\{1,2\}.$$
Since the solution of the Poisson equation $-\Delta c= u$ is given by
$$c=\phi_u =-\frac{1}{2\pi}\log|\cdot|\star u,$$
hence, the system becomes:
\begin{eqnarray}\label{a}
\left\{
\begin{array}{ll}
u_t &=\nabla \cdot (\nabla u - u\nabla\phi_u) \\
\phi_u &=-\frac{1}{2\pi}\log|\cdot|\star u  \mbox{     in } [0,T]\times\RR^2\\
u(0) &= u_0\geq0.
\end{array}
\right.
\end{eqnarray}
In addition, we have that the flow dissipates the free energy
\begin{align}\label{freenergy}
\frac{d}{dt}\mathcal{F}(u)=\frac{d}{dt}\Big(\int_{\RR^2}u\log udx-\frac{1}{2}\int_{\RR^2}u\phi_u dx\Big)\leq 0,
\end{align}
where $\mathcal{F}$ is the sum of the entropy and the potential energy.
The problem is locally well-posed in the finite borel measures space \cite{BM}, but the question of global existence requires more conditions.
Indeed, we can formally compute a virial indentity for a solution of \eqref{a} 
\begin{align*}
\frac{d}{dt}\int_{\RR^2}u(x,t)|x|^2dx&=-\int_{\RR^2}2x\cdot\nabla udx+\frac{1}{2\pi}\int_{\RR^2\times\RR^2}\frac{2x\cdot(y-x)}{|x-y|^2}u(x,t)u(y,t)dydx\\
&=4M_0+\frac{1}{2\pi}\int_{\RR^2\times\RR^2}\frac{(x-y)\cdot(y-x)}{|x-y|^2}u(x,t)u(y,t)dydx\\
&=4M_0-\frac{M_0^2}{2\pi}.
\end{align*}

So that all solutions with finite second moment and mass bigger than $8\pi$ cannot be global in time, and we also have an estimation on the maximal existence time $T_{Max}$ if the second moment of the initial data $u_0$ is finite,
$$T_{Max}\leq \frac{2\pi}{M_0(M_0-8\pi)}\int_{\RR^2}|x|^2u_0(x)dx.$$
Thus, this virial computation is a heuristic explanation of the following trichotomy :
\begin{itemize}
\item If $\int_{\RR^2}u_0(x)dx<8\pi$ solutions are global in time, and the solutions are spreading \cite{BDP},
and  the density converges to a self-similar profile in rescaled variables.
\item If $\int_{\RR^2}u_0(x)dx>8\pi$ solutions blow up in finite time, and we have  aggregation in finite time\cite{BDP,JL}.
\item And if $\int_{\RR^2}u_0(x)dx=8\pi$ and $\int_{\RR^2}|x|^2u_0(x)dx<\infty$ we have global existence and the solution $u$ concentrates  at the origin in infinite time \cite{BCM}, whereas if $\int_{\RR^2}u_0(x)dx=8\pi$ and $\int_{\RR^2}|x|^2u_0(x)dx=\infty$ what could happen is not clear except if the solution initially is sufficiently close to a rescaling $Q_a$ of the stationnary solution $Q$  \cite{BCC}.
\end{itemize}
Indeed, the stationnary solution of \eqref{a} is 
$$Q(x)=\frac{8}{(1+|x|^2)^2},$$
its mass is $8\pi$, its second moment is infinite, and $Q$ is the unique minimizer up to symmetries of the free energy $\mathcal{F}(u)$
with $\int_{\RR^2}u(x,t)dx=8\pi$.

When $M_0<8\pi$ the solutions are spreading and they converge to a self-similar profile in rescaled variables with a rate $\frac{1}{t}$ in \cite{CD,BDEF,BKLN}. Whereas if $M_0>8\pi$ the solutions blow up in finite time and there are few results about it. One of them describes the dynamic of the blowup \cite{HV}.
However, there is another important one in \cite{RS} where they proved the stablity of the blowup in addition to the description of the dynamic, but all of these results are for radial solutions.
Indeed, in \cite{RS} they proved 
\begin{itemize}
\item Universality of the blow-up profile:
for all $t\in[0,T)$ $u(x,t)=\frac{1}{\lambda(t)^2}(Q+\varepsilon)(t,\frac{x}{\lambda(t)})$ with  $\|\varepsilon\|_{H^2_Q}\longrightarrow 0 $ as  $t\rightarrow T,$
where $\lambda(t)=\sqrt{T-t}e^{-\frac{\sqrt{|\log(T-t)|}}{2}+O(1)}$
\item Stability of the dynamic under small perturbation in $H^2_Q(\RR^2)\cap L^1(\RR^2)$.
\end{itemize} 

Their proof is based on the modulation theory which is a strong method for critical problems. For example, it allows people to prove the stability and to describe the dynamic of the flow for the Nonlinear Schrodinger equation when the mass is critical \cite{MR1,MR2}, for the Schrodinger map when the energy is critical \cite{MRR}. And also for the 1-corotational harmonic heat flow map when the energy is critical \cite{RS2}.

In this paper we are interested in the critical mass case namely $M_0=8\pi$. 
The problem when $M_0=8\pi$ is also energy critical
$$\mathcal{F}(u_\lambda)=\mathcal{F}(u),$$
and note that we have in this case one more conservation law 
\begin{equation}\label{eqn:cons2moment}
\frac{d}{dt}\int_{\RR^2}u(x,t)|x|^2dx=0.
\end{equation}

On one hand if $M_0=8\pi$, $\int_{\RR^2}u_0(x)|x|^2dx=+\infty$ and the initial data $u_0$ is sufficiently close to a rescaling $Q_a$ of $Q$ then the solution converges to $Q_a$ when $t\rightarrow +\infty$\cite{BCC}. On the other hand, if $M_0=8\pi$ and $\int_{\RR^2}u_0(x)|x|^2dx<+\infty$ the solution exists globally and concentrates in infinite time \cite{BCM}. The proof is based on a contradiction argument and a virial identity.
When the domain is bounded in \cite{KS}, the authors proved that radial solutions $u$ collapse in $0$  with the following rate
$$u(0,t)=8e^{\frac{5}{2}+2\sqrt{2t}}\Big(1+O(t^{-\frac{1}{2}}\log (4t))\Big),$$
as $t\longrightarrow +\infty$.
The authors in \cite{KS} used the partial mass equation which remove the nonlocal difficulity and an argument of subsolutions and supersolutions to bound the solution from above and below. 
When the domain is unbounded the infinite speed of propagation of the heat semi group will send some mass to infinity almost instantly, and the concentration will occur only when all the mass $8\pi$ is present again in the center of mass. While all the mass concentrate at the origin when $t\rightarrow +\infty$, the second moment is ejected to infinity in space. The goal of the paper is to describe the dynamic in $\RR^2$ when the mass is critical $M_0=8\pi$ and to prove the stability of that dynamic under pertubations of the initial data.

\begin{theorem}\label{theorem:main}
For all $A_0>0$, there exists $\mu^*_0>0$ such that for all $\mu_0<\mu^*_0$ and $u_0$ initial data of the form 
\begin{align}
u_0=\frac{1}{\mu_0}\Big(Q\Big(\frac{x}{\sqrt{\mu_0}}\Big)e^{-\frac{x^2}{2}}+\tilde{\varepsilon}_0\Big(\frac{x}{\sqrt{\mu_0}}\Big) \Big) \mbox{ where },  \int_{\RR^2}\frac{|\tilde{\varepsilon}_0(y)|^2}{Q}e^{\frac{|y|^2}{2}}dy<A_0\mu_0,
\end{align}
and
\begin{align}\label{cond_on_u0}  
\int_{\RR^2}u_0(x)dx=8\pi,\quad \int_{\RR^2}u_0(x)x_idx=0 \quad \mbox{ for all } i\in\{1,2\},\quad I:=\int_{\RR^2}u_0(x)|x|^2dx,
\end{align}
the corresponding solution $u$ blows up in infinite time and satisfies
\begin{itemize}
\item Universality and stability of the profile: There exists $C>0$ and $\lambda\in C^1(\RR_+)$ such that
for $t$ large enough, we have
 \begin{align}\label{decomp_u}
 u(x,t)=\frac{1}{\lambda(t)^2}\Big(Q\Big(\frac{x}{\lambda(t)}\Big)e^{-\frac{|x|^2}{2t}}+\tilde{\varepsilon}\Big(t,\frac{x}{\lambda(t)}\Big)\Big),
 \end{align}
with  $$\int_{\RR^2}\frac{|\tilde{\varepsilon}(t,y)|^2}{Q(y)}e^{\frac{|y|^2}{2t\log(2t+1)}}dy\leq \frac{C}{t\log(t)} $$ 
where $\lambda(t)=\frac{\sqrt{I}}{\sqrt{\log(2t+1)+O(\log(\log(t))}}$.
\end{itemize}
\end{theorem}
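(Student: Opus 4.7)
\emph{Plan of proof.} The strategy is a modulation analysis around the dynamical profile $P(t,y) = Q(y) e^{-\lambda^2(t)|y|^2/(2t)}$. The Gaussian cut-off is essential: $Q$ itself has infinite second moment, while \eqref{eqn:cons2moment} forces $\int u|x|^2 dx = I$ to be finite and constant, and this mechanism is precisely what drives the logarithmic modulation law for $\lambda(t)$. I would first pass to the self-similar variables $y = x/\lambda(t)$, $ds/dt = 1/\lambda^2$, and set $v(s,y) = \lambda^2 u(\lambda y, t) = P(t,y) + \tilde{\varepsilon}(s,y)$. The linearization of \eqref{a} around $Q$ is
\[
\mathcal{L}\tilde{\varepsilon} = \nabla\cdot\bigl(Q\nabla(\tilde{\varepsilon}/Q - 2\phi_{\tilde{\varepsilon}})\bigr),
\]
whose kernel inside the class of zero-mass, centered perturbations is spanned by the scaling direction $\Lambda Q = 2Q + y\cdot\nabla Q$ (translation modes are killed by the assumption $\int u_0 x_i dx = 0$). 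A single scalar parameter $\lambda(t)$ remains, which I fix by imposing orthogonality of $\tilde{\varepsilon}$ to a suitable dual element to $\Lambda Q$.

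Next, I extract the modulation ODE by differentiating the orthogonality condition along the flow and projecting the $\tilde{\varepsilon}$-equation onto the scaling direction, producing a relation of the schematic form $\lambda_t/\lambda = F(t,\lambda) + O(\|\tilde{\varepsilon}\|)$. The leading term $F$ comes from the action of $\partial_t$ on the Gaussian factor in $P$, and its size is pinned down by the second-moment constraint: evaluating
\[
I = \int_{\RR^2} u|x|^2 dx \;\approx\; \int_{\RR^2} |y|^2 Q(y) e^{-\lambda^2|y|^2/(2t)} dy \;\sim\; 8\pi \lambda^2(t)\log\bigl(2t/\lambda^2(t)\bigr),
\]
where the logarithm reflects the borderline divergence $|y|^2 Q(y)\sim 8/|y|^2$ at infinity; inverting this identity yields $\lambda^2(t) \sim I/(C\log t)$, matching the announced rate up to the $\log\log t$ correction.

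To close the bootstrap I would then run a weighted energy estimate in the norm
\[
\mathcal{N}(\tilde{\varepsilon}) := \int_{\RR^2} \frac{|\tilde{\varepsilon}(t,y)|^2}{Q(y)}\, e^{|y|^2/(2t\log(2t+1))} dy.
\]
The $1/Q$ weight makes $\mathcal{L}$ symmetric and coercive modulo its kernel (in the functional framework of \cite{BCC}), and the additional time-dependent Gaussian factor is tuned to match the truncation in $P$ so that the error generated by $\partial_t P$ contributes nonpositively rather than destructively. Once the orthogonality removes the zero mode, I expect a differential inequality of the form $\frac{d}{dt}\mathcal{N}(\tilde{\varepsilon}) \leq -\frac{c}{t}\mathcal{N}(\tilde{\varepsilon}) + \frac{C}{t^2 \log t}$, which integrates to $\mathcal{N}(\tilde{\varepsilon})(t) \leq C/(t\log t)$.

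The main obstacle is the extreme slowness of the dynamics: since $\lambda$ decays only logarithmically, every term in the modulation equation and in the energy estimate must be controlled with precision at the $\log t$ (and even $\log\log t$) level, with no slack. Three specific difficulties stand out. First, $P$ is only an approximate solution, and the remainder $\partial_t P$ (through the time dependence in $\lambda^2/t$) must be carefully split into a part driving the evolution of $\lambda$ and a genuinely small error. Second, the nonlocal Poisson term $\phi_{\tilde{\varepsilon}}$ is only logarithmically controlled by $\mathcal{N}(\tilde{\varepsilon})$ and requires sharp Hardy-type inequalities adapted to $\mathcal{L}$. Third, the spectral gap of $\mathcal{L}$ must survive the time-dependent perturbation of the Gaussian weight, which forces a delicate coupling between the choice of weight in $\mathcal{N}$ and the modulation law $\lambda(t)^2\log(2t+1) \sim I$.
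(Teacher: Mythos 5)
Your overall scaffolding (modulation, a scaling law for $\lambda$ extracted from the conserved second moment via $\int |y|^2 Q e^{-\cdot}\sim C\lambda^2\log t$, and a weighted $L^2_Q$ energy bootstrap) matches the paper in spirit, and your derivation of $\lambda^2\sim I/\log t$ is essentially the paper's Lemma \ref{lemma:mu_law}. But the core of your plan rests on two steps that the paper identifies as exactly the points where the naive approach breaks down, and your sketch does not supply a substitute. First, you linearize around the ad hoc truncated profile $P=Q\,e^{-\lambda^2|y|^2/(2t)}$. This profile appears in the statement \eqref{decomp_u}, but it is not an approximate solution to the accuracy required: the error it generates is of size $\mu\sim 1/\log t$, i.e.\ of the same order as the modulation effects you need to track, with no smallness to absorb it (this is the paper's remark that a cutoff of $Q$ ``doesn't satisfy a good equation and has an uncontrollable error''). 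The paper's central idea is precisely to replace this truncation by $Q_\mu(y)=\mu\, n_\infty^M(\sqrt{\mu}\,y)$, the rescaled \emph{exact} stationary state of the subcritical self-similar problem \eqref{eq_Q_mu}, corrected to $\tilde{Q}_\mu=Q_\mu-\tilde{\mu}\partial_\mu Q_\mu$ so that $\varepsilon$ has zero mass. All the algebraic identities of Proposition \ref{proposition:algebraic_identities} (e.g.\ ${\mathcal{M}_\mu^y}\Lambda Q_\mu=2-\frac{M}{2\pi}-\mu|y|^2$, ${\mathcal{L}_\mu^y}(\partial_\mu Q_\mu-\frac{1}{2\mu}\Lambda Q_\mu)=0$) and the resulting cancellations — the error $E$ projecting to $O(\mu^2)$ against ${\mathcal{M}_\mu^y}\varepsilon$, and the exact cancellation of the $\mu\log\mu$ terms in $(E+F,|\cdot|^2)$ — are what make the estimates closable; none of these are available for your $P$, and your hope that ``$\partial_t P$ contributes nonpositively'' after tuning the weight is not backed by any structure.

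Second, your coercivity claim — that the $1/Q$ weight makes $\mathcal{L}$ ``symmetric and coercive modulo its kernel'' — is the main difficulty of the problem, not a known input. At critical mass the quadratic form is $({\mathcal{M}_\mu^y}\varepsilon,\varepsilon)=\|\varepsilon\|_{L^2_{Q_\mu}}^2-\|\nabla\phi_{\varepsilon}\|_{L^2}^2$, which by Log-HLS is only nonnegative and does \emph{not} control $\|\varepsilon\|_{L^2_{Q_\mu}}$ uniformly in $\mu$; the paper must prove the separate bound $\|\nabla\phi_{\varepsilon}\|_{L^2}^2=O(\mu)$ (Proposition \ref{proposition:boundonphieps}), obtained by pairing the $\varepsilon$-equation with $|y|^2$ and exploiting the $E+F$ cancellation, before the energy norm means anything. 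Moreover the relevant spectral gap is the one for the subcritical linearized operator of Campos--Dolbeault, used uniformly as $M\to 8\pi$ (Corollary \ref{corollary:NT}); the eigenvalues of ${\mathcal{L}_\mu^y}$ on the relevant modes scale like $\mu\sim 1/\log t$, so the gap degenerates in time and every constant in the energy estimate must be uniform in $\mu$ — and in the nonradial case this gap currently rests on the numerical eigenvalue bounds of \cite{CD}. Without (i) a profile with exact algebraic structure replacing $P$, and (ii) an independent control of $\|\nabla\phi_\varepsilon\|_{L^2}$ restoring coercivity, your differential inequality $\frac{d}{dt}\mathcal{N}\le -\frac{c}{t}\mathcal{N}+\frac{C}{t^2\log t}$ cannot be established, so the proposal has a genuine gap at its center.
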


\begin{remark}
The proof requires a spectral gap bound ( see Corollary \ref{corollary:NT} ) which follows from the study of the spectrum of a linearized operator \cite{CD}. As of now the proof of Corollary \ref{corollary:NT} in the nonradial case requires some eigenvalue bounds which were only done numerically in \cite{CD}. We hope to address this issue rigorously elsewhere.
\end{remark}

\begin{remark}
Notice that if we select any rescaling of $u_0$ initially like $(u_0)_a(x)=\frac{1}{a^2}u_0(\frac{x}{a})$ for some $a>0$, then the solution $u_a(t,x)=\frac{1}{\lambda(\frac{t}{a^2})^2a^2}(Q(\frac{x}{a\lambda(\frac{t}{a^2})})e^{-\frac{|x|^2}{2t}}+\tilde{\varepsilon}(\frac{t}{a^2},\frac{x}{a\lambda(\frac{t}{a^2})}))$  induced by $(u_0)_a$ still behaves asymptotically as in Theorem \ref{theorem:main}.
\end{remark}
In the radial case, we can remove the smallness assumption on the initial data, using principle maximum on the partial mass equation, namely:
\begin{corollary}\label{corollary:th_on_mass}
Let $u_0$ be a radial initial data satisfying \eqref{cond_on_u0} and
\begin{align}\label{boundu_0}
\int_{\RR^2}\frac{|u_0|^2}{Q}e^{\frac{|y|^2}{2}}dy<+\infty.
\end{align}
Then there exist $C_1>0$, $C_2>0$, $\lambda_1(t)$ and $\lambda_2(t)$ such that the partial mass $m_u$ of the corresponding solution $u$ (see \eqref{pmass}) satisfies for $t$ large enough and $r=|x|$
\begin{align}\label{part_mass_rate}
-\frac{C_1\lambda_1^2e^{-\frac{C_1 r^2}{2t}}}{\lambda_1^2+r^2}-\frac{C_1\lambda_1^2}{(\lambda_1^2+r^2)t|\log t|}\leq 8\pi- m_u(r,t)\leq\frac{C_2\lambda_2^2e^{-\frac{C_2 r^2}{2t}}}{\lambda_2^2+r^2}+\frac{C_2\lambda_2^2}{(\lambda_2^2+r^2)t|\log t|},
\end{align} 
with $\lambda_i(t)=\frac{I_i}{\sqrt{\log(2t+1)+O(\log(\log(t))}}$, and $I_i>0$ for $i\in\{1,2\}$.
\end{corollary}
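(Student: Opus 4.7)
The plan is to exploit the partial-mass transform, which in the radial setting turns the nonlocal system \eqref{a} into a scalar local parabolic equation amenable to the comparison principle, and then sandwich $m_u$ between explicit barriers whose shape is suggested by Theorem~\ref{theorem:main}.

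First, for radial $u$ the identity $\partial_r \phi_u(r)=-m_u(r,t)/(2\pi r)$ turns \eqref{a} into the scalar local parabolic equation
\begin{equation*}
\partial_t m_u \;=\; \partial_r^2 m_u \;-\; \frac{1}{r}\,\partial_r m_u \;+\; \frac{m_u\,\partial_r m_u}{2\pi r},
\end{equation*}
with $m_u(0,t)=0$ and $\lim_{r\to\infty}m_u(r,t)=8\pi$, to which parabolic comparison applies. I would then feed the output of Theorem~\ref{theorem:main} through the partial-mass operator at some large matching time $t_0$: since \eqref{boundu_0} holds, a parabolic rescaling $u\mapsto u_a$ brings the datum into the smallness window of Theorem~\ref{theorem:main}, so \eqref{decomp_u} holds at $t_0$. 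Integrating in the radial variable, the stationary piece of \eqref{decomp_u} contributes the partial mass $\frac{8\pi r^2}{\lambda(t)^2+r^2}$ of a rescaled $Q$, modulated by $e^{-r^2/(2t)}$, while the weighted $L^2$ bound on $\tilde\varepsilon$ gives, by Cauchy--Schwarz against $Q^{1/2}$, a pointwise correction of order $\lambda^2/((\lambda^2+r^2)\,t|\log t|)$. This identifies the form of the barriers in \eqref{part_mass_rate} up to multiplicative constants at $t=t_0$.

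I would next build explicit super- and sub-solutions of the scalar equation of the form
\begin{equation*}
m_\pm(r,t) \;:=\; 8\pi \;-\; \frac{C_i\lambda_i^2\, e^{-C_i r^2/(2t)}}{\lambda_i^2+r^2} \;\mp\; \frac{C_i\lambda_i^2}{(\lambda_i^2+r^2)\,t|\log t|}, \quad \lambda_i(t) = \frac{I_i}{\sqrt{\log(2t+1)+O(\log\log t)}},
\end{equation*}
verify by direct substitution that $m_+$ is a supersolution and $m_-$ a subsolution of the $m_u$-equation on $[t_0,\infty)\times(0,\infty)$, tune $I_i$ and $C_i$ so that $m_-(\cdot,t_0)\le m_u(\cdot,t_0)\le m_+(\cdot,t_0)$ (which the previous step permits), and invoke parabolic comparison to conclude \eqref{part_mass_rate}.

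The hardest part is the super-/sub-solution verification. The nonlinear drift $m_u\,\partial_r m_u/(2\pi r)$ is singular at $r=0$ and competes with the linear heat operator $\partial_r^2-\partial_r/r$ differently in the three regimes $r\ll\lambda(t)$, $\lambda(t)\lesssim r\lesssim\sqrt{t}$, and $r\gg\sqrt{t}$. Near the origin one relies on the algebraic identity that $\tfrac{8\pi r^2}{\lambda^2+r^2}$ is the stationary partial mass of $Q_\lambda$, so that the linear heat part and the nonlinear drift nearly cancel, leaving only an $O(\lambda^2/((\lambda^2+r^2)\,t|\log t|))$ residual; in the outer regime the Gaussian factor produces effective-dimension-four heat terms that are almost compensated by its own $\partial_t$. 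Matching the $1/(t|\log t|)$ correction against $\partial_t\lambda_i^2$ is precisely what forces the $O(\log\log t)$ remainder in $\lambda_i(t)$, and the boundary condition $m_u(0,t)=0$ pins down the constants $C_i$.
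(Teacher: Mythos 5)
There is a genuine gap at the matching step. You claim that because \eqref{boundu_0} holds, ``a parabolic rescaling $u\mapsto u_a$ brings the datum into the smallness window of Theorem~\ref{theorem:main}, so \eqref{decomp_u} holds at $t_0$.'' This is not true: the hypotheses of the corollary only require mass $8\pi$, zero center of mass, finite second moment and the weighted $L^2$ bound \eqref{boundu_0}; they do not place $u_0$ near the prepared profile $\frac{1}{\mu_0}Q(\cdot/\sqrt{\mu_0})e^{-|x|^2/2}$ in the $L^2_{Q_{\mu_0}}$ sense, and no single rescaling $u_a(x)=a^{-2}u_0(x/a)$ can create that proximity for a general radial datum (think of data concentrated on an annulus, or with two bumps at different scales). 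Removing exactly this smallness assumption is the point of the corollary, so applying Theorem~\ref{theorem:main} to $u$ itself is circular. The paper's proof never does this: it applies Theorem~\ref{theorem:main} to two \emph{auxiliary} solutions $u^1,u^2$ whose initial data are chosen a priori to satisfy the theorem (a compactly supported profile with $u_0^1(0)>0$, and $K\mu_0^{-1}Q(\cdot/\sqrt{\mu_0})e^{-|x|^2/2}$), rescaled by $a$ and $1/a$ with $a$ large so that $m_0^2\le m_0\le m_0^1$ at $t=0$; the comparison principle for the partial mass equation then gives $m_{u^2}\le m_u\le m_{u^1}$ for all times, and the bounds \eqref{part_mass_rate} are obtained by integrating the decomposition $u^i=\lambda_i^{-2}(\tilde Q_{\mu_i}+\varepsilon_i)(\cdot/\lambda_i)$ of these auxiliary solutions, exactly as in your Cauchy--Schwarz computation for the error term.

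Your second step compounds the problem: you compare $m_u$ with explicit barriers $m_\pm$ which are \emph{not} solutions of the flow, so you must verify the super-/sub-solution inequality by hand, uniformly in $r\in(0,\infty)$ and for all large $t$, for a time-dependent profile whose scale $\lambda_i(t)$ is only specified up to an $O(\log\log t)$ ambiguity; you only assert the needed cancellations (``the linear heat part and the nonlinear drift nearly cancel'') without carrying them out, and this is precisely the delicate part of a barrier argument. The paper sidesteps this entirely: since the comparison functions are genuine solutions of \eqref{PKellerseguel}, no differential inequality needs to be checked, and all the quantitative information comes from Theorem~\ref{theorem:main} applied where its hypotheses actually hold. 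To repair your argument you would have to either (i) replace the matching-at-$t_0$ step by the paper's sandwich of initial partial masses between rescaled admissible data, or (ii) genuinely prove the super-/sub-solution inequalities for your barriers and order them against $m_u$ at some time using only $0\le m_u\le 8\pi$, which the stated form of $m_\pm$ does not allow.
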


The proof of the corollary is given in Section 8.
\subsection{Strategy of the proof}

Let's explain the main ideas of the proof that we think could be applicable to many other critical problems.
The main idea of the proof is to pass by the subcritical case to reach the critical one.
Indeed, the blowup profile of the solution $u$ of \eqref{a} is the unique stationary solution $Q=\frac{8}{(1+|x|^2)^2}$, but we cannot use the linearisation around that stationary solution because we need the second moment of $u$ to be finite and the mass to be $8\pi$. Since, $Q$ has an infinite second moment and $8\pi$ mass, it seems natural to multiply $Q$ by a cutoff function $\chi$.
However, $\chi Q$ doesn't satisfy a good equation and has an uncontrollable error.
The idea is to notice that the stationary solution of the subcritical case after rescaling converge uniformly to $Q$.
Indeed, if we rescale \eqref{a} with the self similar variables when the mass $M$ is subcritical there exists for that $M$ a unique stationary solution $n_\infty$ which depends on $M$ and decays exponentially at infinity. Actually, after another rescaling we prove that $n_\infty$ converges uniformly to $Q$ when $M\rightarrow8\pi$.

Actually, we prove that $n_\infty$ after a certain rescaling behaves as $Q$ multiplied by a cutoff function, but the advantage is that $n_\infty$ is a stationary solution of some related equation \eqref{ninfiny}.
The main difficulty now is that everything depends on the mass, i.e. the linearized operator and its spectrum, the energy and the norms that we are controlling. And since the mass is not fixed, we need that all the inequalities are uniform with respect to the mass.
Then, it appears a big difficulty in this problem, the energy that we are controlling is coercive but not uniformly with respect to the mass. To overcome this problem, we discovered an other bound on the energy coming from a property of the nonlinear term (see Proposition \ref{proposition:boundonphieps}).

Classically, to describe the dynamic of the blowup we need to rescale the solution, and the best rescaling is the blowup speed $\lambda$.
Then we renormalize the previous equation with the following change of variable 
$u(t,x) = \frac{1}{\lambda^2(t)} v(s,y)$, $\frac{ds}{dt}= \frac{1}{\lambda^2(t)}$, and $y= \frac{x}{\lambda(t)}$,
where $\lambda$ is an unknown of our problem and will be determined later.
Then we obtain 
\begin{eqnarray}\label{b}
v_s- \frac{\lambda_s}{\lambda}\Lambda v &=\nabla \cdot (\nabla v - v\nabla\phi_v).
\end{eqnarray}
Where $$\Lambda_y v =\nabla\cdot(yv).$$
Actually, we don't rescale  \eqref{a} by $\lambda$ directly, but we arrive to \eqref{b} in 2 steps. In a first step we rescale by the self-similar variables  to make the stationnary solution appear of the subcritical case. And in a second step we rescale by a certain parameter $\mu$ that will be defined later such that the combination of these two rescalings leads to \eqref{b}.  

We first start in the second section to set the equations and the bootstrap argument. In the third section we prove the spectral gap estimate. In the fourth section we construct an approximate solution and in the fifth section we do the derivation of the blowup speed $\lambda$.  In the sixth section we derive a uniform bound on the potential and in the last section we proof the rigorous control of the error $\varepsilon$.
\begin{remark}
We think that the new method we present here that is solving the critical problem by passing to the limit in the subcritical problem is applicable to many others critical problems.
Our result is true for non radial solution under a spectral gap assumption, but in the radial case the spectral gap has been proved in \cite{CD}.
Moreover, the speed of concentration that we derive rigorously from the conservation of the second moment is the one found formally by the physicist \cite[Section 3.3.3]{CS}.
\end{remark}
\subsection{Notations and Conventions}
We will use the following conventions:
\begin{itemize}
\item $C$ denotes a constant that may change from one line to the next and is uniform with respect to $\mu$.
\item For functions $f$ and $g$, we denote $f=O(g)$ if there exists $C$ uniform with respect to $\mu$ such that $f\leq Cg$.
\item For quantities $A$ and $B$, we denote $A \lesssim B$ if there exists $C$ uniform with respect to $\mu$ such that $A \leq CB$.
\item $<r>$ denotes $\sqrt{1+r^2}$.
\item For radial functions such as $Q,\phi_Q$ we keep the same notation for $Q(x)$ and $Q(r)$ where $r=|x|$ and write $\Delta\phi=\pd_{rr}\phi+\frac{1}{r}\pd_r\phi$.
\end{itemize}

\section{Bootstrap argument}
\subsection{The subcritical problem}

We expect $u$ to behave like $Q$ at the blowup time.
Since the second moment of $Q$ is infinite and we are looking for solutions with finite second moment linearizing $u$ arround $Q$ (i.e $u=Q+\varepsilon$) is not helpful.
And if we multiply the ground state $Q$ by a cutoff function $\chi$ to have a finite second moment then the error of our solution $Q\chi$ becomes too large and we cannot close our estimates.
However, if we consider the stationnary solution of the subcritical mass problem $n_\infty$ and we rescale it by a certain parameter $\mu$ then the $n_\infty$ rescaled by $\mu$ will converge to $Q$.
And this sequence $Q_\mu$ has much better properties as we will see later.
To make $n_\infty$ appear,  we rescale \eqref{a} by the self-similar variables,  using $z=\frac{x}{R(t)}$ and $\tau=\log R(t)$ with $R(t)=\sqrt{1+2t}$ such that $u(x,t)=\frac{1}{R(t)^2}w(\frac{x}{R},\log R):$ 
\begin{eqnarray}\label{eqinz}
\left\{
\begin{array}{ll}
\partial_\tau w -\Delta w-\Lambda w &=-\nabla\cdot\bigl( w\nabla \phi_w \bigr) \\
-\Delta \phi_w &=  w,
\end{array}
\right.
\end{eqnarray}
where $\Lambda w=\nabla\cdot(zw)$ and $$\int_{\RR^2}w(z,\tau)dz=\int_{\RR^2}u(x,t)dx=8\pi =\int_{\RR^2}u_0(x)dx.$$
It is well known that \eqref{eqinz} has stationnary solution $n_\infty$ of mass $M$ for all $M<8\pi$.
Actually $n_\infty$ is the solution of the following equation 
\begin{align}\label{ninfiny}
\left\{
\begin{array}{ll}
\Delta n_\infty +\nabla\cdot( z n_\infty - n_\infty\nabla\phi_{n_\infty})=0 \\
\phi_{n_\infty} =-\frac{1}{2\pi}\log|\cdot|\star n_\infty  \mbox{     in } \RR^2.
\end{array}
\right.
\end{align}
The equation can also be written in this way 
\begin{align}\label{eqninfiny}
-\Delta \phi_{n_\infty}= n_\infty = M_{n_\infty} \frac{e^{\phi_{n_\infty}-\frac{|z|^2}{2}}}{\int_{\RR^2}e^{\phi_{n_\infty}-\frac{|z|^2}{2}}dz}.
\end{align}
The linearized operator around $n_\infty$ is also well known:
\begin{align}\label{Lenz}
\mathcal{L}^zw&=\Delta w +\nabla\cdot( z w - w\nabla\phi_{n_\infty}-n_\infty\nabla\phi_w)=\nabla\cdot\Big(n_\infty\nabla\Big(\frac{w}{n_\infty}-\phi_w\Big)\Big)\nonumber\\
&=\nabla\cdot\Big(n_\infty\nabla(\mathcal{M}^z w)\Big), 
\end{align}
$$\mathcal{M}^z w=\frac{w}{n_\infty}-\phi_{w}.$$
Notice that $n_\infty$ can be parametrized by its mass or by its maximum at $z=0$. To emphasize the depence of $n_\infty$ in $M$ we will denote $n_\infty$ as $n_\infty^M$. And $M$ during all the paper will denote the mass of $n_\infty$. We will  parametrize $n_\infty$ by its maximum, we set $n_\infty(0)=\frac{8}{\mu}$, then if $\mu$ goes to 0 the mass $M=M(\mu)$ will go to $8\pi$.
Hence, if we rescale $n_\infty$ correctly with $\mu$, the maximum of the rescaled $n_\infty$ becomes $8$ which is the maximum of $Q$, and this rescaled $n_\infty$ converges uniformly to $Q$ as $\mu$ goes to zero.
The following proposition summarizes the previous statements:
\begin{proposition}
Let $\mu=\frac{8}{n_\infty^M(0)}$ and $y=\frac{z}{\sqrt{\mu}}$, then $\mu n_\infty^M(\sqrt{\mu} y)\rightarrow Q(y)$ uniformly as $\mu\rightarrow 0$. Moreover,
$$Q(y)e^{-\frac{\mu|y|^2}{2}}\leq \mu n_\infty^M(\sqrt{\mu} y)\leq Q(y), \mbox{ for all } y\in\RR^2 .$$
\end{proposition}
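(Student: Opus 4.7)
My plan is first to rescale the problem, then to reformulate the two pointwise bounds as comparisons of the partial masses of $\tilde n$ and $Q$, and finally to deduce the uniform convergence from the bounds.

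With $y = z/\sqrt\mu$, set $\tilde n(y) := \mu n_\infty^M(\sqrt\mu y)$ and $\tilde\phi(y) := \phi_{n_\infty}(\sqrt\mu y)$. Then $-\Delta_y \tilde\phi = \tilde n$ and the Boltzmann-type identity \eqref{eqninfiny} becomes
$$\tilde n(y) = \mu C_M\, e^{\tilde\phi(y) - \mu|y|^2/2}, \qquad \tilde n(0) = 8 = Q(0),$$
the second equality being the defining property of $\mu$. Setting $\tilde\psi := \tilde\phi - \tilde\phi(0)$ and $\psi_Q := \phi_Q - \phi_Q(0)$ (both vanishing at the origin), the identities $\tilde n = 8 e^{\tilde\psi - \mu|y|^2/2}$ and $Q = 8 e^{\psi_Q}$ show that the two pointwise bounds to be proved are equivalent to
$$0 \leq V(y) \leq \frac{\mu|y|^2}{2}, \qquad V := \tilde\psi - \psi_Q,$$
a radial function vanishing at the origin and satisfying $\Delta V = Q - \tilde n$.

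Since $n_\infty^M$ is radial, so is $\tilde n$, and the partial masses satisfy
$$\tilde m''(r) = \frac{\tilde m'}{r}\Big(1 - \frac{\tilde m}{2\pi} - \mu r^2\Big), \qquad m_Q''(r) = \frac{m_Q'}{r}\Big(1 - \frac{m_Q}{2\pi}\Big),$$
with the common initial data $\tilde m(0) = m_Q(0) = 0$, $\tilde m'(0) = m_Q'(0) = 0$, $\tilde m''(0) = m_Q''(0) = 16\pi$. Since $V'(r) = (m_Q - \tilde m)/(2\pi r)$, the lower bound $V \geq 0$ follows from $\tilde m \leq m_Q$, while the upper bound $V \leq \mu r^2/2$ follows from an integrated estimate of the form $\int_0^r (m_Q - \tilde m)/s\,ds \leq \pi\mu r^2$. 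The Taylor expansion at the origin yields $m_Q(r) - \tilde m(r) = 2\pi\mu r^4 + O(r^6)$ (positive and well dominated by $\mu r^2$ near $0$), while at infinity $m_Q \to 8\pi$ and $\tilde m \to M < 8\pi$. To upgrade these to global bounds, I would subtract the two ODEs to obtain a linear second-order ODE for $g := m_Q - \tilde m$ with positive forcing $2\pi\mu r^2\,\tilde n(r)$ and argue by contradiction at a putative first crossing point. Uniform convergence then follows at once from the two pointwise bounds: $|\tilde n(y) - Q(y)| \leq Q(y)(1 - e^{-\mu|y|^2/2})$, and a direct calculus argument shows the right-hand side attains its supremum near $|y|^2 \sim 1/\mu$ with value of order $\mu^2$, which tends to zero.

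The main obstacle I anticipate is the global upper bound $\tilde n \leq Q$. The lower bound readily yields to ODE comparison because the forcing in the $g$-equation is positive and the initial behaviour at $r = 0$ is favourable, but a direct maximum-principle argument applied to $\log(\tilde n/Q)$ or to $m_Q - \tilde m - 2\pi\mu r^2$ does not yield an immediate contradiction at an interior maximum. Closing that step will likely require exploiting the logarithmic asymptotic $\tilde\phi(y) \sim -\frac{M}{2\pi}\log|y|$ at infinity (with $M < 8\pi$) together with a refined analysis of the ODE for $g$.
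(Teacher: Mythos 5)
Your setup is, in different variables, exactly the paper's own framework: writing $Q_\mu(y)=\mu n_\infty^M(\sqrt\mu y)=8e^{\psi_\mu}$ with $\psi_\mu:=\phi_{Q_\mu}-\mu r^2/2$, the two bounds are the chain $\phi_Q-\mu r^2/2<\psi_\mu<\phi_Q$, which is your $0\le V\le \mu r^2/2$, and your partial-mass difference $g=m_Q-\tilde m$ is just $2\pi r V'$. The trouble is that the step you yourself flag as the ``main obstacle'' --- the global upper bound $\tilde n\le Q$ --- is precisely the hard half of the proposition, and your proposal contains no argument that closes it: as you observe, no direct maximum-principle or first-crossing argument on $g$ works (the zeroth-order coefficient $\tilde n>0$ in the $g$-equation has the wrong sign, and $g'\ge 0$ is \emph{literally equivalent} to the inequality $\tilde n\le Q$ you are trying to prove), and the remedy you suggest (the asymptotic $\tilde\phi\sim-\frac{M}{2\pi}\log|y|$) does not obviously help since a putative first crossing may occur at any finite radius. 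The paper closes this case with a specific device absent from your sketch: at the first radius $r_0$ where the chain fails with $\psi_\mu(r_0)=\phi_Q(r_0)$ (the other inequalities holding on $(0,r_0)$), it compares $\psi_\mu$ with the explicit barrier $\tilde\psi_\mu(r):=-8\int_0^{r}\frac{1}{r'}\int_0^{r'}\tau e^{\phi_Q(\tau)-\mu\tau^2/2}d\tau\,dr'-\mu r^2/2$, obtained by freezing $\phi_Q$ inside the integral equation, so that $\psi_\mu(r_0)<\tilde\psi_\mu(r_0)$; it then proves $\partial_\mu\tilde\psi_\mu<0$ by an explicit computation with $\phi_Q=-2\log(1+r^2)$, namely $\partial_r\partial_\mu\tilde\psi_\mu(r)<\partial_r\partial_\mu\tilde\psi_\mu|_{\mu=0}(r)=\frac{2}{r}\bigl(-1+\frac{1}{1+r^2}+\log(1+r^2)\bigr)-r<0$, whence $\tilde\psi_\mu(r_0)<\tilde\psi_0(r_0)=\phi_Q(r_0)$, contradicting $\psi_\mu(r_0)=\phi_Q(r_0)$. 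Without this (or an equivalent) idea your proof is incomplete.

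Two further remarks. First, the lower bound is not quite as ``ready'' as you claim if treated in isolation: the forcing $2\pi\mu r^2\tilde n>0$ plus favourable behaviour at $r=0$ does not by itself exclude a first zero of $g$, for the same sign reason as above. The argument that does work (and is how the paper organizes the proof) is to run the contradiction at the first failure of the \emph{two-sided} chain: if the lower bound were the first to fail at $r_0$, then $\tilde n<Q$ on $(0,r_0)$ gives $g'>0$, hence $g>0$, hence $V(r_0)=\int_0^{r_0}\frac{g(s)}{2\pi s}ds>0$, a contradiction; so the only delicate case is the one discussed above. Second, a small slip in your last step: $\sup_y Q(y)\bigl(1-e^{-\mu|y|^2/2}\bigr)$ is of order $\mu$, attained at $|y|\sim 1$, not of order $\mu^2$ near $|y|^2\sim 1/\mu$; this does not affect the conclusion that the convergence is uniform.
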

This is a direct consequence of Proposition \ref{proposition:comp1}.
Since $\mu n_\infty^M(\sqrt{\mu}\cdot)$ converges to $Q$, it seems natural to rescale \eqref{eqinz} by using
$y=\frac{z}{\sqrt{\mu}}$,$\frac{ds}{d\tau}=\frac{1}{\mu}$ with $w(z,\tau)=\frac{1}{\mu}v(\frac{z}{\sqrt{\mu}},s)$ where $\mu$ will be fixed later:
\begin{eqnarray}\label{v2rescal}
\left\{
\begin{array}{ll}
\partial_s v -\Delta v-(\mu+\frac{\partial_s \mu}{2\mu})\Lambda v &=-\nabla\cdot\bigl( v\nabla \phi_v \bigr) \\
-\Delta \phi_v =  v.
\end{array}
\right.
\end{eqnarray}
To resume the situation we have 3 set of variables $(x,t); (z,\tau); (y,s)$ which are linked in the following way:
\begin{align}
z=\frac{x}{R(t)}; \hspace{0.5cm} y=\frac{z}{\sqrt{\mu}}=\frac{x}{\sqrt{\mu}R(t)};  \\
\tau(t)=\log(R(t)), \hspace{0.5cm} \frac{ds}{d\tau}=\frac{1}{\mu}; \hspace{0.5cm} \frac{ds}{dt}=\frac{1}{\mu R(t)^2};  
\end{align}
where $R(t)=\sqrt{1+2t}$. We fix $s(0)=e$ to avoid problem if one divides by $s$ or if one takes the $\log$ of $s$.
Now we set $$Q_\mu(y)=\mu n_\infty^M(\sqrt{\mu} y),$$
hence $Q_\mu$ solves 
\begin{align}\label{eq_Q_mu}
\Delta Q_\mu-\nabla\cdot(Q_\mu\nabla\phi_{Q_\mu})=-\mu\nabla\cdot(yQ_\mu).
\end{align}
Let $v=Q_\mu+\varepsilon$, we will prove later that  $M(\mu)=\int_{\RR^2}Q_\mu dy= 8\pi+2\mu\log(\mu)+O(\mu)$ which implies that the mass of $\varepsilon$ will be of order $\mu\log(\mu)$ since the mass of $v$ is $8\pi$. We need $\varepsilon $ to be at least of order $\mu$ to close our estimates hence, we add a correction to $Q_\mu$. 
We set $\tilde{Q}_\mu=Q_\mu+T_\mu (y)$ where $T_\mu$ is a correction  to insure that $\tilde{Q_\mu}$ is of mass $8\pi$. It will be constructed in section 4.
 
\subsection{Setting up the bootstrap argument} 

We consider an initial data of the form
$$ v_0 = \tilde{Q}_{\mu_0}+\varepsilon_0, $$
with $ 8\pi $ mass, a finite second moment $I$ such that, $\int_{\RR^2}\varepsilon_0 dy=0$ and where $\mu_0>0$ is sufficiently small.
We will show  the existence and uniqueness of the following decomposition of our solution 
\begin{align}\label{decomp_v}
v(y,s)=\tilde{Q}_\mu+\varepsilon,
\end{align}
where the mass of $\varepsilon$ is zero.
Set
\begin{align}\label{L2qmu}
L^2_{Q_\mu}(\RR^2)=\Big\{f\in L^2(\RR^2) \mbox{ such that } \int_{\RR^2}\frac{f^2}{Q_\mu}dx<\infty\Big\},
\end{align}
and
\begin{align}\label{L2qmu0}
L^2_{{Q_\mu},0}(\RR^2)=\Big\{f\in L^2_{Q_\mu}(\RR^2) \mbox{ such that } \int_{\RR^2} f(x)dx=0\Big\}.
\end{align}
In the following Lemma we fix $\mu$ such that $(\varepsilon,|\cdot|^2)_{L^2}=0$.
\begin{lemma}\label{lemma:mod}[Modulation]
For all $A>0$, there exists $\bar{\mu}>0$ such that for all $\mu_*\in(0,\bar{\mu})$, and $v\in L^2_{Q_{\mu_*}}$.
If
 \begin{align}\label{modcond}
 \|v-\tilde{Q}_{\mu_*}\|_{ L^2_{Q_{\mu_*}}}\leq \delta(\mu_*):=\sqrt{A\mu_*},
 \end{align}
 there exist a unique $\mu>0$ and a unique $\varepsilon$ such that 
$$v(y)=\tilde{Q}_\mu(y)+\varepsilon(y),$$
and
$$(\varepsilon,|\cdot|^2)_{L^2}=0.$$
In addition,
$$ \|\varepsilon\|_{L^2_{Q_{\mu_*}}}\lesssim \delta(\mu_*),$$
and
$$|\mu-\mu_*|\lesssim \mu_*^{\frac{3}{2}}|\log(\mu_*)|^{\frac{1}{2}}.$$
\end{lemma}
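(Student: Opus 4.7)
The plan is to apply the implicit function theorem (in one real variable) to the scalar functional
$$\Psi(\mu,v):=\int_{\RR^2}(v-\tilde Q_\mu)(y)\,|y|^2\,dy,$$
since the orthogonality $(\varepsilon,|\cdot|^2)_{L^2}=0$ with $\varepsilon:=v-\tilde Q_\mu$ is exactly $\Psi(\mu,v)=0$. At the base point $(\mu_*,\tilde Q_{\mu_*})$ the functional vanishes identically, so the substantive work is to establish a quantitative non-degeneracy of $\partial_\mu\Psi$ together with a quantitative size bound on $\Psi(\mu_*,v)$; the IFT then yields existence, uniqueness, and the claimed estimates on $\mu-\mu_*$ and $\varepsilon$ in one stroke.

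For the non-degeneracy, I would first compute the asymptotics as $\mu\to 0$ of $P(\mu):=\int_{\RR^2}\tilde Q_\mu(y)|y|^2\,dy$. Using $Q_\mu(y)=\mu n_\infty^{M(\mu)}(\sqrt\mu y)$ with the sandwich $Q(y)e^{-\mu|y|^2/2}\le Q_\mu(y)\le Q(y)$ from the proposition just before, together with the (subleading) contribution of the corrector $T_\mu$ built in Section~4, an elementary integration gives $P(\mu)=-8\pi\log\mu+O(1)$. Differentiating in $\mu$ (using the smoothness of $M\mapsto n_\infty^M$ derived from the elliptic equation \eqref{eqninfiny}) yields $|P'(\mu)|\asymp 1/\mu$ with constants uniform as $\mu\to 0$, which is the quantitative non-degeneracy needed for the IFT.

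For the size bound on $\Psi(\mu_*,v)$, the key observation is that both $v$ and $\tilde Q_{\mu_*}$ have mass $8\pi$, so $\varepsilon_*:=v-\tilde Q_{\mu_*}$ has zero mass. Hence for every $c\in\RR$,
$$|\Psi(\mu_*,v)|=\Big|\int\varepsilon_*\,(|y|^2-c)\,dy\Big|\le\|\varepsilon_*\|_{L^2_{Q_{\mu_*}}}\Big(\int_{\RR^2}Q_{\mu_*}(|y|^2-c)^2\,dy\Big)^{1/2}$$
by Cauchy--Schwarz. Optimizing over $c$ (the $L^2(Q_{\mu_*})$-projection of $|y|^2$ onto the constants) and using the sandwich inequality to estimate the weighted fourth moment delivers the required bound on $|\Psi(\mu_*,v)|$ in terms of $\delta(\mu_*)=\sqrt{A\mu_*}$. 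Combining with the non-degeneracy of $P'$ through a one-dimensional quantitative IFT gives existence and uniqueness of $\mu$ near $\mu_*$ and the bound $|\mu-\mu_*|\lesssim\mu_*^{3/2}|\log\mu_*|^{1/2}$.

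The $L^2_{Q_{\mu_*}}$-estimate on $\varepsilon$ then follows by the triangle inequality
$$\|\varepsilon\|_{L^2_{Q_{\mu_*}}}\le\|v-\tilde Q_{\mu_*}\|_{L^2_{Q_{\mu_*}}}+\|\tilde Q_\mu-\tilde Q_{\mu_*}\|_{L^2_{Q_{\mu_*}}},$$
where the second term is controlled by $|\mu-\mu_*|$ times the $L^2_{Q_{\mu_*}}$-norm of $\partial_\mu\tilde Q_\mu$, itself obtained by differentiating the elliptic equation for $n_\infty^M$ and using that $L^2_{Q_{\mu_*}}$ is naturally adapted to the rescaling. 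The main obstacle is not the IFT itself (one-dimensional and classical), but the uniformity in $\mu_*\to 0$: one must control $P'(\mu_*)$, the weighted projection constants, the derivative of $n_\infty^M$ in $M$, and the precise size of $T_\mu$, all with bounds independent of $\mu_*$, while the underlying Hilbert space $L^2_{Q_{\mu_*}}$ itself degenerates as $\mu_*\to 0$.
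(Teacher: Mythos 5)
Your overall scheme is the same as the paper's: reduce the modulation to the single scalar equation $F(\mu)=(v-\tilde Q_\mu,|\cdot|^2)_{L^2}=0$, use $|\partial_\mu F|\asymp 1/\mu$ (coming from $\int\partial_\mu\tilde Q_\mu|y|^2\,dy=-C/\mu+O(1/(\mu|\log\mu|))$, i.e.\ \eqref{2momdmutildeqmu}) as the non-degeneracy, bound $|F(\mu_*)|$, and conclude by a one-dimensional quantitative argument (the paper uses a Taylor expansion with the second-derivative bound \eqref{2momd2mutildeqmu} and the Intermediate Value Theorem rather than the IFT, but that difference is cosmetic).

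The genuine gap is in your smallness bound for $\Psi(\mu_*,v)$. After subtracting the best constant $c$, Cauchy--Schwarz gives
\begin{equation*}
|\Psi(\mu_*,v)|\le\|\varepsilon_*\|_{L^2_{Q_{\mu_*}}}\Big(\min_{c}\int_{\RR^2}Q_{\mu_*}\,\bigl(|y|^2-c\bigr)^2dy\Big)^{1/2},
\end{equation*}
and the minimized variance is \emph{not} small: since $Q_{\mu_*}\approx Q e^{-\mu_*|y|^2/2}$ one has $\int Q_{\mu_*}|y|^4dy\asymp \mu_*^{-1}$ while $\bigl(\int Q_{\mu_*}|y|^2dy\bigr)^2/\int Q_{\mu_*}dy\asymp|\log\mu_*|^2$, so subtracting the projection onto constants only removes an $O(|\log\mu_*|^2)$ piece and the variance remains of order $\mu_*^{-1}$. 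Your bound is therefore $|\Psi(\mu_*,v)|\lesssim\sqrt{A\mu_*}\cdot\mu_*^{-1/2}=O(\sqrt A)$, which combined with $|\partial_\mu F|\asymp1/\mu_*$ yields only $|\mu-\mu_*|\lesssim\mu_*$, not the stated $|\mu-\mu_*|\lesssim\mu_*^{3/2}|\log\mu_*|^{1/2}$ (the paper instead estimates $F(\mu_*)$ against $\bigl(\int|y|^2\tilde Q_{\mu_*}dy\bigr)^{1/2}\sim|\log\mu_*|^{1/2}$, which is exactly the point your optimization fails to reproduce, and which requires more than the plain $L^2_{Q_{\mu_*}}$ Cauchy--Schwarz you invoke). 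Two further, smaller points: the zero-mass property of $\varepsilon_*$ that your argument relies on is not among the hypotheses of the lemma (it holds in the application, where $v$ and $\tilde Q_{\mu_*}$ both have mass $8\pi$, but the paper's proof does not use it); and the leading constant in $P(\mu)$ should be $-\tfrac{M}{2\pi}\log\mu+O(1)$ in the paper's normalization, though only the orders $|\log\mu|$ and $1/\mu$ matter for the argument.
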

\begin{proof}[Proof of Lemma \ref{lemma:mod}]
The proof is based on a careful use of the Intermediate Value Theorem.
Given $v$ satisfying \eqref{modcond}, consider the $C^1$ function 
$$ F(\mu)=(v-\tilde{Q}_{\mu},|\cdot|^2)_{L^2}.$$
From \eqref{2momdmutildeqmu} we deduce
$$\partial_{\mu} F_{|\mu=\mu_*}=((\partial_{\mu} \tilde{Q}_{\mu})_{|\mu=\mu_*},|\cdot|^2)=-\frac{C}{\mu_*}+O(|\log\mu_*|^2)\neq0,$$
and from \eqref{2momd2mutildeqmu}
$$|\partial_{\mu\mu}F|\lesssim \frac{1}{\mu^2}.$$
It follows for $|\mu-\mu^*|\leq\frac{\mu^*}{2}$,
$$F(\mu)=F(\mu_*)+\partial_\mu F(\mu_*)(\mu-\mu_*)+O(\frac{1}{\mu^2_*})(\mu-\mu_*)^2,$$
and since 
$$F(\mu_*)\leq \|v-\tilde{Q}_{\mu_*}\|_{ L^2_{Q_{\mu_*}}}\Big(\int_{\RR^2}|y|^2\tilde{Q}_{\mu_*}dy\ Big)^{\frac{1}{2}}\lesssim \sqrt{\mu_*}|\log\mu_*|^{\frac{1}{2}},$$
we deduce that there exists $C^*$ such that:
$$\mu_{\pm}=\mu_*\pm C^*\mu_*^{\frac{3}{2}}|\log\mu_*|^{\frac{1}{2}},$$
verify
$$F(\mu_-)<0,\quad \mbox{ and } \quad F(\mu_+)>0.$$
Hence, by the Intermediate value theorem there exists $\mu\in(\mu_-,\mu_+)$ such that $$F(\mu)=0,$$ 
which concludes the proof. The proof of \eqref{2momdmutildeqmu} and \eqref{2momd2mutildeqmu} is given in Section 5.
\end{proof}
Let $u_0$ be as in Theorem \ref{theorem:main} and let $u$ be the global free energy solution constructed in \cite{BCM}.
Consequently, thanks to Lemma \ref{lemma:mod} the solution admits a unique decomposition on some small time interval $[0,T^*)$ :
\begin{align}
u(x,t)=\frac{1}{R^2(t)\mu(t)}\Big(\tilde{Q}_\mu\Big(\frac{x}{R(t)\sqrt{\mu(t)}}\Big)+\varepsilon\Big(\frac{x}{R(t)\sqrt{\mu(t)}},t\Big)\Big),
\end{align}
where $$(\varepsilon(t),1)=(\varepsilon(t),|\cdot|^2)=0,$$
and from standard argument (see \cite{MM}) $\mu(t)$ satisfies $\mu\in C^1([0,T^*))$.
In all the rest of the manuscript we will make an abuse of notation and consider $\mu$ also as a function of $s$.
Using the initial smallness assumption on $\varepsilon_0$ and $\mu_0$, we prove the following Proposition:
\begin{proposition}\label{proposition:bootstrap}
Let $u_0$ be as in Theorem \ref{theorem:main} and let $u$ be the global free energy solution constructed in \cite{BCM}. Assume there exists $S^*$ such that for all $s\in[e,S^*)$, there exist $\eta>0$ and $\mu:=\mu(s)$ a $C^1$ function from $[e,S^*)$ to $\RR_+$, $\mu(e)=\mu_0$ and
\begin{align}\label{muhypboot}
\Big|\mu(s)-\frac{1}{2s}\Big|<\frac{A_1}{s|\log s|}.
\end{align}
Let us define $v(s,y)=\mu R^2 u(t,x)=\tilde{Q}_\mu(y)+\varepsilon(s,y)$ where $y=\frac{x}{R\sqrt{\mu}}$ and $\frac{ds}{dt}=\frac{1}{\mu R^2}$.
If we assume that for all $s\in[e,S^*)$
\begin{equation}\label{bootenergie}
\|\varepsilon\|_{L^2_{Q_\mu}}^2\leq A\mu,
\end{equation}
then the regime is trapped, in particular when $\frac{S^*}{2}<s\leq S^*$ we obtain the same inequalities \eqref{muhypboot} and \eqref{bootenergie} with $A_1$ and $A$ replaced by $\frac{A_1}{2}$ and $\frac{A}{2}$.
\end{proposition}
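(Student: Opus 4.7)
The plan is to close both bootstrap estimates simultaneously on the half-interval $(S^*/2, S^*]$ using (i) a modulation equation for $\mu$ extracted from the orthogonality $(\varepsilon,|y|^2)_{L^2}=0$ together with the conservation of the second moment for $u$, and (ii) a Lyapunov-type energy estimate for $\varepsilon$ built on the coercive divergence structure of the operator $\mathcal{L}_\mu$ linearized around $\tilde{Q}_\mu$.

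First I would derive the evolution of $\varepsilon$. Substituting $v=\tilde{Q}_\mu+\varepsilon$ into \eqref{v2rescal} and subtracting the stationary identity \eqref{eq_Q_mu} for $Q_\mu$, the correction $T_\mu$ contributes only a forcing and does not alter the principal linear part. Schematically,
\begin{equation*}
\partial_s \varepsilon = \mathcal{L}_\mu \varepsilon - \mu_s\,\partial_\mu \tilde{Q}_\mu + \Big(\mu+\frac{\mu_s}{2\mu}\Big)\Lambda(\tilde{Q}_\mu+\varepsilon) + E_\mu - \nabla\cdot(\varepsilon\nabla\phi_\varepsilon),
\end{equation*}
where $E_\mu$ collects the error between $\tilde{Q}_\mu$ and an exact profile of \eqref{v2rescal} and the last term is the genuine quadratic nonlinearity.

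Second, for the modulation equation I would differentiate $(\varepsilon(s),|y|^2)_{L^2}=0$ in $s$ and substitute the evolution equation. Coupling with $|y|^2$ turns the leading term $\mu_s\,\partial_\mu \tilde Q_\mu$ into $-C\mu_s/\mu\,(1+o(1))$ by the derivative bound \eqref{2momdmutildeqmu}, while $(\Lambda \tilde Q_\mu,|y|^2)_{L^2}$ produces a logarithmic factor in $\mu$ from the far-field tail of $Q_\mu$. Combined with the exact identity $\mu R^2 \int |y|^2 v\,dy = I$ inherited from \eqref{eqn:cons2moment}, one obtains $\mu_s = -2\mu^2 + O(\mu^2/|\log\mu|)$. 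Since the ODE $\mu_s = -2\mu^2$ has $\mu=1/(2s)$ as an attracting solution, integrating on $[e,s]$ with the bootstrap control on the remainder gives the improved bound $|\mu - 1/(2s)|<\frac{A_1}{2s|\log s|}$ for $s > S^*/2$ provided $\mu_0$ is chosen small enough.

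Third, I would perform the energy estimate. Exploiting the divergence form $\mathcal{L}_\mu = \nabla\cdot(Q_\mu\nabla\mathcal{M}_\mu)$ with $\mathcal{M}_\mu\varepsilon = \varepsilon/Q_\mu - \phi_\varepsilon$, the natural Lyapunov functional is
\begin{equation*}
\mathcal{E}(\varepsilon) = \frac{1}{2}\int_{\RR^2} \varepsilon\,\mathcal{M}_\mu \varepsilon \, dy = \frac{1}{2}\int_{\RR^2} \frac{\varepsilon^2}{Q_\mu}\,dy - \frac{1}{2}\int_{\RR^2} \varepsilon\,\phi_\varepsilon\,dy.
\end{equation*}
Computing $\frac{d}{ds}\mathcal{E}(\varepsilon)$ produces the dissipation $-\int Q_\mu|\nabla \mathcal{M}_\mu\varepsilon|^2 \,dy$ plus coupling terms from $\partial_s Q_\mu$, from $\mu_s$, from $E_\mu$, from the drift $\Lambda\varepsilon$, and from the cubic term $\int \mathcal{M}_\mu\varepsilon\,\nabla\cdot(\varepsilon\nabla\phi_\varepsilon)\,dy$. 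The spectral gap of Corollary~\ref{corollary:NT}, applied on the subspace $\{(\varepsilon,1)_{L^2}=(\varepsilon,|y|^2)_{L^2}=0\}$ secured by Lemma~\ref{lemma:mod}, yields $\int Q_\mu|\nabla\mathcal{M}_\mu\varepsilon|^2\,dy \gtrsim \|\varepsilon\|_{L^2_{Q_\mu}}^2$ uniformly in $\mu$. A Gronwall argument with the improved modulation bound then propagates $\|\varepsilon\|_{L^2_{Q_\mu}}^2 \leq \frac{A}{2}\mu$ on $(S^*/2, S^*]$.

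The main obstacle, flagged explicitly in the strategy section, is the lack of uniformity in $\mu$ of the coercivity of $\mathcal{E}$: the quadratic piece $-\int \varepsilon\,\phi_\varepsilon$ a priori competes with the positive $L^2_{Q_\mu}$ norm as $\mu\to 0$, and the cubic nonlinearity $\int \mathcal{M}_\mu\varepsilon\,\nabla\cdot(\varepsilon\nabla\phi_\varepsilon)\,dy$ picks up factors in $1/Q_\mu$ that degenerate on the support where $Q_\mu$ is small. Surmounting this requires the uniform bound on $\phi_\varepsilon$ supplied by Proposition~\ref{proposition:boundonphieps}, combined with Hardy-type inequalities in the weighted space $L^2_{Q_\mu}$, so that the nonlinear and quadratic remainders are strictly dominated by the dissipation. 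A secondary difficulty is the drift term $(\mu+\mu_s/(2\mu))\Lambda \varepsilon$; fortunately the bootstrap on $\mu$ forces its coefficient to be $O(\mu/|\log\mu|)$, which is just enough, combined with \eqref{bootenergie}, to close the estimate.
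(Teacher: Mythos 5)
Your proposal follows the paper's overall skeleton (law of $\mu$ tied to the conserved second moment, the bound $\|\nabla\phi_\varepsilon\|_{L^2}^2=O(\mu)$ of Proposition \ref{proposition:boundonphieps}, and a spectral-gap energy estimate), but the core of your energy step is misstated in a way that prevents the bootstrap from closing as written. Corollary \ref{corollary:NT} does \emph{not} give $\int Q_\mu|\nabla\mathcal{M}_\mu^y\varepsilon|^2dy\gtrsim\|\varepsilon\|^2_{L^2_{Q_\mu}}$ uniformly in $\mu$; it gives $\int Q_\mu|\nabla\mathcal{M}_\mu^y\varepsilon|^2dy\geq K_2\,\mu\,(\mathcal{M}_\mu^y\varepsilon,\varepsilon)$, i.e. a gap relative to the $\mathcal{M}$-form (which, as you yourself note later, does not control $\|\varepsilon\|_{L^2_{Q_\mu}}$ uniformly) and carrying a factor $\mu\sim\frac{1}{2s}$. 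That factor is what makes the decay only algebraic: the usable output is $\frac12\frac{d}{ds}(\mathcal{M}_\mu^y\varepsilon,\varepsilon)+\mu(K_2-1-\delta)(\mathcal{M}_\mu^y\varepsilon,\varepsilon)\leq C(\delta,A)\mu^2$, and the regime is trapped only because (i) $K_2>2$, so $K=K_2-1-\delta>1$ and $\frac{d}{ds}(s^K\mathcal{E})\lesssim C(\delta,A)s^{K-2}$ integrates to $\mathcal{E}(s)\lesssim C(\delta,A)/s$, (ii) $C(\delta,A)\lesssim\frac1\delta(\sqrt A+1)\ll A$ for $A$ large, and (iii) one adds back $\|\nabla\phi_\varepsilon\|^2_{L^2}\leq C\mu$ to convert the bound on $(\mathcal{M}_\mu^y\varepsilon,\varepsilon)$ into $\|\varepsilon\|^2_{L^2_{Q_\mu}}\leq\frac A2\mu$. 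None of this quantitative bookkeeping (the loss of exactly one unit of $K_2$, the threshold $K_2>2$, the $\sqrt A$-dependence of the source constant) appears in your sketch, and a Gronwall argument based on your claimed $\mu$-uniform coercivity would wrongly predict exponential decay.

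The second gap is your treatment of the scaling term and of the time-dependent weight. Pairing $\Lambda\varepsilon$ with $\mathcal{M}_\mu^y\varepsilon$ produces, via $\frac{y\cdot\nabla Q_\mu}{Q_\mu}=y\cdot\nabla\phi_{Q_\mu}-\mu|y|^2$, the term $-\frac{\mu}{2}\int\frac{|y|^2\varepsilon^2}{Q_\mu}dy$, which is not controlled by $\|\varepsilon\|^2_{L^2_{Q_\mu}}$; in the paper it is cancelled exactly by the term coming from differentiating the weight $Q_{\mu(s)}$ inside the energy, namely $-\frac{\mu_s}{2}\bigl(\frac{\partial_\mu Q_\mu}{Q_\mu^2}\varepsilon,\varepsilon\bigr)\approx-\frac{\mu^2}{2}\int\frac{|y|^2\varepsilon^2}{Q_\mu}dy$, which has the good sign. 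You never differentiate the weight, and your grouping of the drift as $(\mu+\frac{\mu_s}{2\mu})\Lambda$ with coefficient $O(\mu/|\log\mu|)$ is incompatible with invoking Corollary \ref{corollary:NT}: the gap is for $\mathcal{L}_\mu^y=\nabla\cdot(Q_\mu\nabla\mathcal{M}_\mu^y\cdot)$, which already contains $\mu\Lambda$, and once that part is reabsorbed the leftover $\frac{\mu_s}{2\mu}\Lambda\varepsilon$ is of full size $\mu\|\varepsilon\|^2_{L^2_{Q_\mu}}\sim A\mu^2$, comparable to the dissipation — this is precisely what costs the unit of $K_2$. Analogous cancellations you do not mention are needed for the profile error $E=\tilde\mu\Lambda Q_\mu-\tilde\mu^2\nabla\cdot(\partial_\mu Q_\mu\nabla\phi_{\partial_\mu Q_\mu})$, whose naively $O(\mu)$ part drops only because $(\Lambda Q_\mu,\mathcal{M}_\mu^y\varepsilon)=0$ under the orthogonalities, and for the critical nonlinear term, which requires the further decomposition $\varepsilon=\alpha_\mu\Lambda Q_\mu+\hat\varepsilon$ with the orthogonality \eqref{orthcond} plus uniform Hardy--Poincar\'e inequalities, not generic Hardy bounds. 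Two smaller points: the full gap \eqref{spectralgap2y} also needs $(\varepsilon,y_i)_{L^2}=0$ (from center-of-mass conservation), which you omit; and your route to the improved $\mu$-bound by integrating $\mu_s=-2\mu^2+O(\mu^2/|\log\mu|)$ from $s=e$ accumulates the initial offset $1/\mu_0$ and needs a compatibility between $\mu_0$, $I$ and the time origin, whereas the paper reads $\mu(s)$ off the algebraic identity $\frac{I}{R^2\mu}=-\frac{M}{2\pi}\log\mu+O(1)$, with a constant $C'$ independent of $A_1$, so that taking $A_1$ large gives the improvement directly.
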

Hence, if one can increase by continuity the size of the intervall $[e,S^*)$, it will induce that we can also increase the size of $[0,T^*)$.
\section{Linear operator and spectral Gap}

In this section, we describe the properties of the linearized operator $\mathcal{L}_\mu^y$ obtained from the linearization around $Q_\mu$.
If we plug $v(y,s)=\tilde{Q}_\mu(y)+\varepsilon(y,s)$ in \eqref{v2rescal}  we get:
\begin{align}
 \partial_s\varepsilon +\partial_s \tilde{Q}_\mu &= -\nabla\cdot(\varepsilon\nabla\phi_{\varepsilon})+(\mu+\frac{\mu_s}{2\mu})(\Lambda \varepsilon+\Lambda \tilde{Q}_\mu)+\Delta \tilde{Q}_\mu \nonumber\\
 &+\Delta\varepsilon-\nabla\cdot(\tilde{Q}_\mu\nabla\phi_\varepsilon)-\nabla\cdot(\varepsilon\nabla\phi_{\tilde{Q}_\mu})-\nabla\cdot(\tilde{Q}_\mu\nabla\phi_{\tilde{Q}_\mu}).
 \end{align}
Hence,
\begin{align}\label{eqn:linQmu}
 \partial_s\varepsilon &={\mathcal{L}_\mu^y}\varepsilon+\frac{\mu_s}{2\mu}\Lambda\varepsilon+\Theta_\mu(\varepsilon)+N(\varepsilon)+\tilde{E},
 \end{align}
where
$${\mathcal{L}_\mu^y}\varepsilon =  \nabla\cdot\Big(Q_\mu\nabla({\mathcal{M}_\mu^y}\varepsilon)\Big)=\Delta\varepsilon+\mu\Lambda\varepsilon-\nabla\cdot( \varepsilon\nabla\phi_{Q_\mu}+Q_\mu\nabla\phi_\varepsilon),$$
the previous equality just comes from the fact that $\frac{\nabla Q_\mu}{Q_\mu}=\nabla\phi_{Q_\mu}-\mu y$ which we will prove in Proposition \ref{proposition:algebraic_identities}.
In addition,
$${\mathcal{M}_\mu^y}\varepsilon=\frac{\varepsilon}{Q_\mu}-\phi_{\varepsilon}.$$
Note that ${\mathcal{M}_\mu^y}$ corresponds to the linearisation arround $Q_\mu$ of the free energy
$$\mathcal{F}(u)=\int_{\RR^2}u\log udx-\frac{1}{2}\int_{\RR^2}u\phi_u dx.$$
The error term$\tilde{E}$ comes from the fact that $\tilde{Q_\mu}$ is not an exact solution of \eqref{v2rescal}. It is given by
\begin{align}\label{Error}
\tilde{E}=\Delta\tilde{Q}_\mu+\mu\Lambda\tilde{Q}_\mu-\nabla\cdot(\tilde{Q}_\mu\nabla\phi_{\tilde{Q}_\mu})+\frac{\mu_s}{2\mu}\Lambda \tilde{Q}_\mu -\partial_s \tilde{Q}_\mu.
\end{align}

We split it into 2 parts: 
$$\tilde{E}=E+F,$$
with 
$$E=\Delta\tilde{Q}_\mu+\mu\Lambda\tilde{Q}_\mu-\nabla\cdot(\tilde{Q}_\mu\nabla\phi_{\tilde{Q}_\mu}),\quad \mbox{ and } F=\frac{\mu_s}{2\mu}\Lambda \tilde{Q}_\mu -\partial_s \tilde{Q}_\mu.$$
The nonlinear term $N(\varepsilon)$ is given by,
\begin{equation}\label{nonlinear}
N(\varepsilon)=-\nabla\cdot(\varepsilon\nabla\phi_{\varepsilon}).
\end{equation}

The linear term $\Theta_\mu$ measures the error due to the fact that we are linearizing arround $\tilde{Q}_\mu$ and not arround $Q_\mu$. It is given by:
$$\Theta_\mu(\varepsilon)=\nabla\cdot[(\tilde{Q}_\mu-Q_\mu)\nabla\phi_\varepsilon+\varepsilon\nabla\phi_{\tilde{Q}_\mu-Q_\mu}].$$
We have the following relations between ${\mathcal{L}_\mu^y}$ and $\mathcal{L}^z$:
$${\mathcal{L}_\mu^y}(v(y))=\mu\mathcal{L}^z\Big(\frac{1}{\mu}v(\frac{z}{\sqrt{\mu}})\Big).$$
We now prove fundamental algebraic relations on $Q_\mu$, ${\mathcal{M}_\mu^y}$ and ${\mathcal{L}_\mu^y}$:
\begin{proposition}\label{proposition:algebraic_identities}
Let $Q_\mu$ be the solution of \eqref{eq_Q_mu},
then $\phi_{Q_\mu}$ solves:
\begin{align}\label{eqphiqmu}
-\Delta\phi_{Q_\mu}=-\phi_{Q_\mu}''-\frac{1}{r}\phi_{Q_\mu}'=Q_\mu=8e^{\phi_{Q_\mu}-\mu\frac{|y|^2}{2}},
\end{align}
\begin{itemize}
\item \textbf{Algebraic identities for ${\mathcal{M}_\mu^y}$}
$${\mathcal{M}_\mu^y}(\partial_\mu Q_\mu)=-\frac{|y|^2}{2},$$
$${\mathcal{M}_\mu^y}(\partial_i Q_\mu)=-\mu y_i,$$
$${\mathcal{M}_\mu^y}(\Lambda Q_\mu)=2-\frac{M}{2\pi}-\mu|y|^2,$$ 
$$M'(\mu) \mu\partial_M n_\infty(\sqrt{\mu}y)=\partial_\mu Q_\mu-\frac{1}{2\mu}\Lambda Q_\mu.$$
$${\mathcal{M}_\mu^y}\bigl(\partial_\mu Q_\mu-\frac{1}{2\mu}\Lambda Q_\mu\bigr)=-\Big(2-\frac{M}{2\pi}\Big).$$
\item \textbf{Algebraic identities for ${\mathcal{L}_\mu^y}$}
$$\hspace{0.5cm}{\mathcal{L}_\mu^y}(\partial_\mu Q_\mu)=-\Lambda(Q_\mu), $$       
$$\hspace{0.5cm}{\mathcal{L}_\mu^y}(\partial_i Q_\mu)=-\mu \partial_i Q_\mu,$$
\hspace{0.5cm}$${\mathcal{L}_\mu^y}(\Lambda Q_\mu)=-2\mu\Lambda Q_\mu$$
$${\mathcal{L}_\mu^y}\bigl(\partial_\mu Q_\mu-\frac{1}{2\mu}\Lambda Q_\mu\bigr)=0.$$
\end{itemize}
\end{proposition}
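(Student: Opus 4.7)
The plan is to derive every identity in the proposition from a single master relation---the Boltzmann form $Q_\mu = 8 e^{\phi_{Q_\mu} - \mu |y|^2/2}$---by elementary differentiation, together with one asymptotic argument at infinity to pin down a harmonic constant.

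First, I would establish \eqref{eqphiqmu}. Starting from \eqref{eqninfiny}, $n_\infty = (M/Z) e^{\phi_{n_\infty} - |z|^2/2}$ with $Z = \int e^{\phi_{n_\infty} - |z|^2/2}dz$. Substituting $z = \sqrt{\mu}\,y$ into the Poisson equation and using $Q_\mu(y) = \mu\,n_\infty(\sqrt{\mu}y)$ gives $-\Delta_y\phi_{Q_\mu} = Q_\mu$ directly, and the normalization $Q_\mu(0) = 8$ determines the multiplicative constant so that $Q_\mu = 8 e^{\phi_{Q_\mu} - \mu|y|^2/2}$. Taking gradients then yields the master algebraic identity
\[\frac{\nabla Q_\mu}{Q_\mu} = \nabla\phi_{Q_\mu} - \mu y,\]
from which the factorization $\mathcal{L}_\mu^y \varepsilon = \nabla\cdot(Q_\mu \nabla \mathcal{M}_\mu^y \varepsilon)$ already recorded in the excerpt follows by one line of product-rule computation.

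Next, the $\mathcal{M}_\mu^y$ identities on $\partial_\mu Q_\mu$ and $\partial_i Q_\mu$ are immediate: differentiating the Boltzmann form in $\mu$ (respectively in $y_i$) and using linearity of the Poisson equation (so that $\phi_{\partial_\mu Q_\mu} = \partial_\mu \phi_{Q_\mu}$ and $\phi_{\partial_i Q_\mu} = \partial_i \phi_{Q_\mu}$), the nonlocal contribution cancels and one is left with $-|y|^2/2$ and $-\mu y_i$ respectively. The chain-rule identity $M'(\mu)\mu\,\partial_M n_\infty(\sqrt{\mu}y) = \partial_\mu Q_\mu - \frac{1}{2\mu}\Lambda Q_\mu$ is then a direct computation starting from $Q_\mu(y) = \mu\,n_\infty^{M(\mu)}(\sqrt{\mu}y)$ and collecting the three contributions to $\partial_\mu$ (explicit $\mu$, $M(\mu)$, and the spatial rescaling $\sqrt{\mu}y$); the rescaling contribution is rewritten via $y\cdot\nabla Q_\mu = \Lambda Q_\mu - 2Q_\mu$.

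The only non-trivial step, and the main obstacle, is the identity on $\Lambda Q_\mu$. Here I would compute $\Lambda Q_\mu/Q_\mu = 2 + y\cdot\nabla\phi_{Q_\mu} - \mu|y|^2$ from the master relation and determine $\phi_{\Lambda Q_\mu}$ as follows. A short calculation with the commutator $[\Delta,\Lambda] = 2\Delta$ shows that $-\Delta(y\cdot\nabla\phi_{Q_\mu}) = \Lambda Q_\mu$, so $\phi_{\Lambda Q_\mu} = y\cdot\nabla\phi_{Q_\mu} + h$ for some bounded (hence constant) harmonic function $h$. To identify $h$, I would use that $\int \Lambda Q_\mu = 0$ and that $Q_\mu$ is radial with Gaussian-type decay, so by far-field expansion the Newton potential satisfies $\phi_{\Lambda Q_\mu}(y) = O(|y|^{-2}) \to 0$, while from the leading behaviour $\phi_{Q_\mu}(y) = -\tfrac{M}{2\pi}\log|y| + O(1)$ one has $y\cdot\nabla\phi_{Q_\mu}(y)\to -M/(2\pi)$. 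This forces $h = M/(2\pi)$ and yields $\mathcal{M}_\mu^y(\Lambda Q_\mu) = 2 - M/(2\pi) - \mu|y|^2$. The last $\mathcal{M}_\mu^y$ identity then follows as a linear combination of the preceding ones.

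Finally, the four $\mathcal{L}_\mu^y$ identities follow immediately by applying $\mathcal{L}_\mu^y \varepsilon = \nabla\cdot(Q_\mu \nabla \mathcal{M}_\mu^y \varepsilon)$ to each $\mathcal{M}_\mu^y$ identity already obtained: for instance, $\mathcal{L}_\mu^y(\partial_\mu Q_\mu) = \nabla\cdot(Q_\mu \nabla(-|y|^2/2)) = -\nabla\cdot(yQ_\mu) = -\Lambda Q_\mu$, and similarly for the others. In particular, the identity $\mathcal{L}_\mu^y\bigl(\partial_\mu Q_\mu - \tfrac{1}{2\mu}\Lambda Q_\mu\bigr) = 0$ simply reflects the presence of the zero-mode tangent to the one-parameter family of subcritical ground states parametrized by $M$.
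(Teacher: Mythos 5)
Your proposal is correct, and for most of the identities it follows the paper's own proof: \eqref{eqphiqmu} is obtained by rescaling the equation for $n_\infty^M$ and normalizing so that $Q_\mu(0)=8$; the identities for $\partial_\mu Q_\mu$ and $\partial_i Q_\mu$ come from differentiating $\log Q_\mu=\log 8+\phi_{Q_\mu}-\mu\frac{|y|^2}{2}$ in $\mu$ and $y_i$; the relation $M'(\mu)\mu\,\partial_M n_\infty(\sqrt{\mu}y)=\partial_\mu Q_\mu-\frac{1}{2\mu}\Lambda Q_\mu$ is the same chain-rule computation; and the ${\mathcal{L}_\mu^y}$ identities follow, as in the paper, by applying ${\mathcal{L}_\mu^y}=\nabla\cdot\bigl(Q_\mu\nabla{\mathcal{M}_\mu^y}\,\cdot\bigr)$ to the ${\mathcal{M}_\mu^y}$ ones. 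The one step where you genuinely diverge is ${\mathcal{M}_\mu^y}(\Lambda Q_\mu)$: the paper differentiates the scaling family $Q^\lambda_\mu(y)=\lambda^2Q_\mu(\lambda y)$ at $\lambda=1$, using the covariance $\phi_{Q_\mu}(\lambda y)=-\frac{M}{2\pi}\log\lambda+\phi_{Q^\lambda_\mu}(y)$ of the logarithmic potential, which produces the constant $2-\frac{M}{2\pi}$ purely algebraically; you instead note $-\Delta(y\cdot\nabla\phi_{Q_\mu})=\Lambda Q_\mu$, deduce $\phi_{\Lambda Q_\mu}=y\cdot\nabla\phi_{Q_\mu}+h$ with $h$ bounded harmonic hence constant, and fix $h=\frac{M}{2\pi}$ from $\phi_{\Lambda Q_\mu}\to 0$ at infinity and $r\phi'_{Q_\mu}(r)=-\frac{1}{2\pi}\int_{\{|y|\le r\}}Q_\mu\,dy\to-\frac{M}{2\pi}$. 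Your route costs a Liouville plus far-field argument (the needed decay facts for the radial, mean-zero, rapidly decaying $\Lambda Q_\mu$ are indeed available), while the paper's scaling trick is shorter and encodes the same relation $\phi_{\Lambda Q_\mu}=y\cdot\nabla\phi_{Q_\mu}+\frac{M}{2\pi}$ implicitly; both are valid. Two minor remarks: the linear combination of your identities gives ${\mathcal{M}_\mu^y}\bigl(\partial_\mu Q_\mu-\frac{1}{2\mu}\Lambda Q_\mu\bigr)=-\frac{1}{2\mu}\bigl(2-\frac{M}{2\pi}\bigr)$ rather than the constant displayed in the statement, which appears to be a misprint; only the fact that this quantity is constant (so it pairs to zero against mean-zero functions) is ever used, so your argument delivers what is needed. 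Also, like the paper, you identify $\phi_{\partial_\mu Q_\mu}$ with $\partial_\mu\phi_{Q_\mu}$ without tracking the $\mu$-dependent additive normalization of the potential; this only shifts such constants and is harmless for the same reason.
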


\begin{proof}[Proof of Proposition \ref{proposition:algebraic_identities}]
\begin{itemize}
\item \underline{Derivation of \eqref{eqphiqmu}.}\\
Let us recall the properties of $n_\infty^M(z)$ for $M<8\pi$:
$$-\Delta \phi_{n_\infty^M}= n_\infty^M = M \frac{e^{\phi_{n_\infty^M}-\frac{|z|^2}{2}}}{\int_{\RR^2}e^{\phi_{n_\infty^M}-\frac{|z|^2}{2}}dz},$$
with $M=\int_{\RR^2}n_\infty^M dz<8\pi$ and $$n_\infty^M(0)=M\frac{e^{\phi_{n_\infty^M}(0)}}{\int_{\RR^2}e^{\phi_{n_\infty^M}-\frac{|z|^2}{2}}dz}$$ or $$\phi_{n_\infty^M}(0)=\log(n_\infty^M(0)\int_{\RR^2}e^{\phi_{n_\infty^M}-\frac{|z|^2}{2}}dz)-\log(M).$$
Hence if we set $\tilde{\phi}_{n_\infty^M}=\alpha +\phi_{n_\infty^M}$ where $\alpha =\log(M)-\log(\int_{\RR^2}e^{\phi_{n_\infty^M}-\frac{|z|^2}{2}}dz)$, then we get 
$$-\Delta\tilde{\phi}_{n_\infty^M}=n_\infty^M(z)=e^{\tilde{\phi}_{n_\infty^M}-\frac{|z|^2}{2}},$$
with $\tilde{\phi}_{n_\infty^M}(0)=\alpha +\phi_{n_\infty^M}(0)=\log(n_\infty^M(0))$.
Now we see that if we choose $\mu=\frac{8}{n_\infty^M(0)}$, $y=\frac{z}{\sqrt{\mu}}$, and 
$$\phi_{Q_\mu}(y)=\tilde{\phi}_{n_\infty^M}(\sqrt{\mu} y)-\log(n_\infty^M(0)),$$
then, $\phi_{Q_\mu}$ solves \eqref{eqphiqmu} with $\phi_{Q_\mu}(0)=0$ and $\phi_{Q_\mu}'(0)=0$.
 \item \underline{Differentiation of \eqref{eqphiqmu} with respect to $y$.}\\
Moreover, we have  by differentiating \eqref{logeqmu} in $y$
 \begin{align}\label{logeqmu}
 \log(Q_\mu)=\log(8)+\phi_{Q_\mu}-\mu\frac{|y|^2}{2}:
 \end{align}
$$\frac{\partial_i Q_\mu}{Q_\mu}=\phi_{\partial_i Q_\mu}-\mu y_i,$$
then 
$${\mathcal{M}_\mu^y}(\partial_i Q_\mu)=-\mu y_i.$$
Hence,
$${\mathcal{L}_\mu^y}(\partial_i Q_\mu)=-\mu \partial_i Q_\mu.$$
\item \underline{Differentiation of \eqref{eqphiqmu} with respect to $\mu$.}\\
We see easily by differentiating \eqref{logeqmu} in $\mu$ that\begin{align}
\frac{\partial_\mu Q_\mu}{Q_\mu}=\phi_{\partial_\mu Q_\mu}-\frac{|y|^2}{2},
\end{align}
which means 
$${\mathcal{M}_\mu^y}(\partial_\mu Q_\mu)=-\frac{|y|^2}{2}.$$
It follows that
$${\mathcal{L}_\mu^y}(\partial_\mu Q_\mu)=-\nabla\cdot(yQ_\mu).$$
Actually, we differentiated $Q_\mu$ with respect to $\mu$ without justifying that $Q_\mu$ is smooth with respect to $\mu$.
However, this will be done later in section 4 (see \eqref{gronwallineq}).
\item \underline{Differentiation of \eqref{eqphiqmu} with respect to $\lambda$.}\\
In addtition, if we set $Q^\lambda_\mu(y)=\lambda^2Q_\mu(\lambda y)$ where $\lambda>0$, then 
$$\phi_{Q_\mu}(\lambda y)=-\frac{M(\mu)}{2\pi}\log(\lambda)+\phi_{Q^\lambda_\mu}(y).$$
Using \eqref{logeqmu}, we also get
$$\log(Q^\lambda_\mu)=\log(8)+\phi_{Q^\lambda_\mu}+(2-\frac{M}{2\pi})\log(\lambda)-\mu\lambda^2\frac{|y|^2}{2}.$$
If we differentiate the previous equation in $\lambda$ and set $\lambda=1$, we deduce
\begin{align}\label{Mlambdaqmu}
\frac{\Lambda Q_\mu}{Q_\mu}-\phi_{\Lambda Q_\mu}={\mathcal{M}_\mu^y}(\Lambda Q_\mu)=2-\frac{M}{2\pi}-\mu|y|^2.
\end{align}
It follows 
$${\mathcal{L}_\mu^y}(\Lambda Q_\mu)=-2\mu\Lambda Q_\mu.$$ 
The relation $M'(\mu) \mu\partial_M n^M_\infty(\sqrt{\mu}y)=\partial_\mu Q_\mu-\frac{1}{2\mu}\Lambda Q_\mu$ is a consequence of an easy computation.  From this relation and ${\mathcal{L}_\mu^y}(\partial_\mu Q_\mu)=-\Lambda(Q_\mu)$, \hspace{0,1cm}  ${\mathcal{L}_\mu^y}(\Lambda Q_\mu)=-2\mu\Lambda Q_\mu$, we deduce that ${\mathcal{L}_\mu^y}(\partial_\mu Q_\mu-\frac{1}{2\mu}\Lambda Q_\mu)=0$. One can also recover this from the fact that 
$${\mathcal{L}^z}(\partial_M n^M_\infty)=0\Rightarrow {\mathcal{L}_\mu^y}(\partial_\mu Q_\mu-\frac{1}{2\mu}\Lambda Q_\mu)=0.$$
 \end{itemize}
 \end{proof}
\begin{lemma}\label{lemma:Mprop}
The operator $Q_\mu{\mathcal{M}_\mu^y}:L^2_{Q_\mu}\longrightarrow L^2_{Q_\mu}$, given by
$$Q_\mu{\mathcal{M}_\mu^y} u=u-{Q_\mu}\phi_u,$$
is linear continuous, self adjoint and Fredholm.
Moreover,
$$\int_{\RR^2}Q_\mu|{\mathcal{M}_\mu^y} u|^2\leq  K^*\| u\|_{L^2_{Q_\mu}}^2,$$
with $K^*$ uniform with respect to $\mu$,
$$\forall (u,v)\in L^2_{Q_\mu}\times L^2_{Q_\mu}, ({\mathcal{M}_\mu^y} u,v)=(u,{\mathcal{M}_\mu^y} v).$$
In addition, 
 $Q_\mu{\mathcal{M}_\mu^y}:L^2_{Q_\mu,0}\longrightarrow L^2_{Q_\mu,0}$
is positive definite, and defines an inner product on $L^2_{Q_\mu,0}$.
\end{lemma}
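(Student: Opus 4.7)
The plan is to verify continuity, self-adjointness, the Fredholm property, and positive definiteness in turn, the recurring difficulty being \emph{uniformity in $\mu$}.

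\emph{Continuity and self-adjointness.} Expanding
$$\|Q_\mu\mathcal{M}_\mu^y u\|_{L^2_{Q_\mu}}^2 = \int\frac{u^2}{Q_\mu}\,dy - 2\int u\phi_u\,dy + \int Q_\mu\phi_u^2\,dy,$$
Cauchy--Schwarz on the cross term reduces continuity to the bound $\int Q_\mu\phi_u^2\,dy \leq C\|u\|_{L^2_{Q_\mu}}^2$ with $C$ uniform in $\mu$. I would exploit the pointwise bound $Q_\mu(y)\leq Q(y)\lesssim \langle y\rangle^{-4}$, split the convolution $\phi_u(y) = -(2\pi)^{-1}\int \log|y-y'|\,u(y')\,dy'$ into short- and long-range pieces, treat the short-range part with a weighted Hardy--Littlewood--Sobolev estimate, and control the long-range part via $\|u\|_{L^1}\leq \|Q_\mu\|_{L^1}^{1/2}\|u\|_{L^2_{Q_\mu}}\leq (8\pi)^{1/2}\|u\|_{L^2_{Q_\mu}}$; the monotone convergence $Q_\mu\nearrow Q$ then yields $C$ independent of $\mu$. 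Self-adjointness is immediate from the symmetry of the logarithmic Newtonian kernel, $\int v\phi_u\,dy = \int u\phi_v\,dy$.

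\emph{Fredholm property.} Write $Q_\mu\mathcal{M}_\mu^y = I - K$ with $Ku := Q_\mu\phi_u$. I would show that $K$ is compact on $L^2_{Q_\mu}$ by approximating it in operator norm by the Hilbert--Schmidt operators $K_R u := \mathbf{1}_{|y|\leq R}Q_\mu\phi_u$, whose Schwartz kernels lie in $L^2$ thanks to the decay of $Q_\mu$ and the local integrability of $\log|y-y'|$; the tail $K-K_R$ vanishes in operator norm by the same decay.

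\emph{Positivity on $L^2_{Q_\mu,0}$.} After the mass-preserving rescaling $u(y) = \mu h(\sqrt\mu\, y)$, the bilinear form $\int u\,\mathcal{M}_\mu^y u\,dy$ equals $\int h^2/n_\infty^M\,dz - \int h\phi_h\,dz$ for any zero-mean $h$, which is precisely the second variation at $n_\infty^M$ of the modified free energy
$$\mathcal{F}_{\mathrm{mod}}(w) := \int w\log w\,dz - \tfrac12\int w\phi_w\,dz + \tfrac12\int |z|^2 w\,dz$$
on mass-preserving perturbations. Since $n_\infty^M$ is the unique strict minimizer of $\mathcal{F}_{\mathrm{mod}}$ at fixed mass $M(\mu)<8\pi$ and the confining $\tfrac12\int|z|^2 w$ term eliminates translation invariance, this form is strictly positive for $\int h = 0$; the natural kernel candidate $\partial_\mu Q_\mu - (2\mu)^{-1}\Lambda Q_\mu$ furnished by Proposition \ref{proposition:algebraic_identities} has mass $M'(\mu)\neq 0$ and is therefore excluded from $L^2_{Q_\mu,0}$, confirming the inner product structure. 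The main obstacle lies in making this positivity \emph{quantitative and uniform} in $\mu\to 0^+$: the limiting problem $M=8\pi$ is critical and the quadratic form develops the neutral direction $\Lambda Q$, so one must carefully track how the spectral gap of $Q_\mu\mathcal{M}_\mu^y$ on $L^2_{Q_\mu,0}$ depends on $\mu$; this is precisely the refinement carried out in the next section.
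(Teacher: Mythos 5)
Your proposal is correct and follows essentially the same route as the paper: the potential term $\int_{\RR^2} Q_\mu\phi_u^2\,dy$ is controlled through the decay $Q_\mu\leq Q$ and the at most logarithmic growth of $\phi_u$ (the paper simply quotes the corresponding estimate from \cite{RS}), self-adjointness comes from the symmetry of the logarithmic kernel, the Fredholm property from writing the operator as $I$ minus the compact map $u\mapsto Q_\mu\phi_u$, and nonnegativity on zero-mass perturbations from identifying $({\mathcal{M}_\mu^y} u,u)$ with the second variation of a free-energy functional at its constrained minimizer. Your positivity step, carried out in the $z$-variables with the confined functional minimized by $n_\infty^M$, is only a mild (and arguably cleaner) variant of the paper's appeal to the minimizing property of $Q_\mu$ via the logarithmic HLS inequality, and, like the paper, it leaves strict definiteness at the level of an assertion (the candidate degenerate direction $\partial_\mu Q_\mu-\frac{1}{2\mu}\Lambda Q_\mu$ having nonzero mass), which matches the level of detail of the original proof.
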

\begin{proof}
We use the following inequality from \cite{RS}
$$\|\phi_u\|_{L^\infty(\{r\leq1\})}+\Big\|\frac{\phi_u}{1+|\log<r>|}\Big\|_{L^\infty(\{r\geq1\})}\lesssim \|u\|_{L^2_Q}.$$
Hence,
\begin{eqnarray}
\int_{\RR^2}Q_\mu|{\mathcal{M}_\mu^y} u|^2dy\lesssim \|u\|^2_{L^2_{Q_\mu}}+\Big\|\frac{\phi_u}{1+|\log |y||}\Big\|^2_{L^\infty}\int_{\RR^2}(1+|\log(|y|)|)Q_\mu dy\lesssim \|u\|^2_{L^2_{Q_\mu}}.
\end{eqnarray}
The self adjointness follows from the fact that
$$\mbox{ for all } (u,v)\in L^2_{Q_\mu}\times L^2_{Q_\mu}, (\phi_u,v)_{L^2}=(u,\phi_v)_{L^2}.$$
It is easy to see that the operator is in the form $I-Q_\mu\phi_{(\cdot)}$ with $Q_\mu\phi_{(\cdot)}$ a compact operator which implies that it is Fredholm. 
To prove that $(\mathcal{M}_\mu^y v,v)\geq0$ for any $v\in L^2_{Q_\mu,0}$ , we recall that the free energy $\mathcal{F}(v)$ achieves its minimum at $v = Q_\mu$ according to the Logarithmic Hardy-Littlewood-Sobolev inequality and observe that
\begin{align}\label{freemini}
(\mathcal{M}_\mu^y v,v)=\lim_{\delta\rightarrow0}\frac{1}{\delta^2}\mathcal{F}(Q_\mu+\delta v)\geq0,
\end{align}
for any smooth function $v$ compactly supported and satisfying $\int_{\RR^2} v dy=0$. Hence, by density \eqref{freemini} holds for any $v$ in $L^2_{Q_\mu,0}(\RR^2)$.
\end{proof}
Let's define the domain of ${\mathcal{L}_\mu^y}$,
$$D({\mathcal{L}_\mu^y})=\{f\in L^2_{{Q_\mu}} \mbox{ such that }\mathcal{L}_\mu^y(f)\in L^2_{{Q_\mu}}\}\subset L^2_{{Q_\mu}}.$$
\begin{proposition} \label{proposition:propofL}
${\mathcal{L}_\mu^y}$ is a self-adjoint operator and has discrete spectrum on $D_0({\mathcal{L}_\mu^y})\subset L^2_{{Q_\mu},0}$ for the inner product $(\cdot,\cdot)_{{\mathcal{M}_\mu^y}}$, where 
$$D_0({\mathcal{L}_\mu^y})=\{f\in L^2_{{Q_\mu},0} \mbox{ such that }\mathcal{L}_\mu^y(f)\in L^2_{{Q_\mu},0}\}.$$
Moreover in $D({\mathcal{L}_\mu^y})$, 
\begin{align}\label{kernelLmuy}
Ker({\mathcal{L}_\mu^y})&=Span(\partial_\mu Q_\mu-\frac{1}{2\mu}\Lambda Q_\mu),\\
\label{eigen1}
{\mathcal{L}_\mu^y}(\partial_i Q_\mu)&=-\mu \partial_i Q_\mu,\\
\label{eigen2}
{\mathcal{L}_\mu^y}(\Lambda Q_\mu)&=-2\mu\Lambda Q_\mu.
\end{align}
\end{proposition}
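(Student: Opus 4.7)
The plan is to handle the four assertions in sequence. The eigenvalue relations \eqref{kernelLmuy}, \eqref{eigen1}, \eqref{eigen2} are immediate consequences of the algebraic identities already established in Proposition \ref{proposition:algebraic_identities}; it remains only to verify that $\partial_\mu Q_\mu$, $\partial_i Q_\mu$, $\Lambda Q_\mu$ and the combination $\partial_\mu Q_\mu - \tfrac{1}{2\mu}\Lambda Q_\mu$ lie in the appropriate domains, which follows from the Gaussian-type decay of $Q_\mu$ and its derivatives inherited from the exponential factor $e^{\phi_{Q_\mu}-\mu|y|^2/2}$ in \eqref{eqphiqmu}. This part requires no new work.

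For self-adjointness, I would exploit the divergence form $\mathcal{L}_\mu^y u = \nabla\cdot(Q_\mu \nabla \mathcal{M}_\mu^y u)$. Integration by parts together with the symmetry of $\mathcal{M}_\mu^y$ in $L^2$ from Lemma \ref{lemma:Mprop} yields
\begin{equation*}
(\mathcal{L}_\mu^y u, v)_{\mathcal{M}_\mu^y} = \int_{\RR^2} \mathcal{L}_\mu^y u \,\mathcal{M}_\mu^y v\, dy = -\int_{\RR^2} Q_\mu \,\nabla \mathcal{M}_\mu^y u \cdot \nabla \mathcal{M}_\mu^y v\, dy,
\end{equation*}
which is manifestly symmetric and shows additionally that $-\mathcal{L}_\mu^y$ is nonnegative in $(\cdot,\cdot)_{\mathcal{M}_\mu^y}$, with quadratic form $\int Q_\mu |\nabla \mathcal{M}_\mu^y u|^2\,dy$.

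For the discreteness of the spectrum on $D_0(\mathcal{L}_\mu^y)\subset L^2_{Q_\mu,0}$, I would split $\mathcal{L}_\mu^y = \mathcal{A}_\mu - \mathcal{K}_\mu$, where $\mathcal{A}_\mu u := \Delta u + \mu \Lambda u - \nabla\cdot(u\nabla\phi_{Q_\mu})$ is a Fokker--Planck operator in the confining potential $\Phi_\mu := -\log Q_\mu = -\log 8 - \phi_{Q_\mu} + \mu|y|^2/2$, which grows quadratically at infinity, and $\mathcal{K}_\mu u := \nabla\cdot(Q_\mu \nabla\phi_u)$ is a nonlocal perturbation. Standard Fokker--Planck spectral theory gives $\mathcal{A}_\mu$ a compact resolvent on the weighted space (the growth of $\Phi_\mu$ forces a Poincaré-type inequality and compact embedding of the form domain into $L^2_{Q_\mu}$). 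The perturbation $\mathcal{K}_\mu$ is compact relative to $\mathcal{A}_\mu$ because $u\mapsto \phi_u$ is smoothing of order two (Newtonian potential) and the rapidly decaying factor $Q_\mu$ provides the spatial compactness; hence $\mathcal{L}_\mu^y$ inherits a compact resolvent and a discrete spectrum.

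The main obstacle is identifying the kernel. If $\mathcal{L}_\mu^y u=0$ with $u\in L^2_{Q_\mu,0}$, the quadratic form identity above forces $\int_{\RR^2} Q_\mu|\nabla \mathcal{M}_\mu^y u|^2\,dy=0$, hence $\mathcal{M}_\mu^y u\equiv c$ is constant, which reduces the kernel equation to the elliptic problem $u = Q_\mu(c+\phi_u)$ together with $-\Delta\phi_u = u$ and $\int u\,dy=0$. To show this problem admits only scalar multiples of $\partial_\mu Q_\mu - \tfrac{1}{2\mu}\Lambda Q_\mu$, I would transfer to the $z$-variable via $\mathcal{L}_\mu^y(u(y)) = \mu\,\mathcal{L}^z\bigl(\mu^{-1}u(z/\sqrt{\mu})\bigr)$ and invoke the known one-dimensional structure $\mathrm{Ker}(\mathcal{L}^z)=\mathrm{Span}(\partial_M n_\infty^M)$ in the appropriate functional class from \cite{CD,RS}; the explicit identity $M'(\mu)\mu\, \partial_M n_\infty^M(\sqrt{\mu}y) = \partial_\mu Q_\mu - \tfrac{1}{2\mu}\Lambda Q_\mu$ from Proposition \ref{proposition:algebraic_identities} then closes the argument. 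This is precisely the step where the spectral information from \cite{CD}, flagged in the Remark following Theorem \ref{theorem:main}, enters the analysis, and is the delicate point in the nonradial case.
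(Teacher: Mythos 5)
Your proposal is correct in substance, but it takes a genuinely different route from the paper, because the paper does not actually prove the functional-analytic content of Proposition \ref{proposition:propofL}: it obtains \eqref{kernelLmuy}--\eqref{eigen2} from the algebraic identities of Proposition \ref{proposition:algebraic_identities} (exactly as you do) and then simply states that the self-adjointness, the discreteness of the spectrum and the kernel characterization ``were proved in \cite{CD} (using the $z$ coordinate)'', transferring them to the $y$ variable through the scaling relation ${\mathcal{L}_\mu^y}(v(y))=\mu\,\mathcal{L}^z\bigl(\tfrac{1}{\mu}v(\tfrac{z}{\sqrt{\mu}})\bigr)$. You instead sketch a direct argument: symmetry and nonpositivity of ${\mathcal{L}_\mu^y}$ from the divergence form and the quadratic form $\int_{\RR^2} Q_\mu|\nabla{\mathcal{M}_\mu^y} u|^2dy$; discreteness by writing ${\mathcal{L}_\mu^y}=\mathcal{A}_\mu-\mathcal{K}_\mu$ with $\mathcal{A}_\mu u=\nabla\cdot\bigl(Q_\mu\nabla(u/Q_\mu)\bigr)$ a Fokker--Planck operator in the confining potential $\mu|y|^2/2-\phi_{Q_\mu}$ and $\mathcal{K}_\mu$ a relatively compact nonlocal piece; and the kernel by reducing ${\mathcal{L}_\mu^y} u=0$ to ${\mathcal{M}_\mu^y} u=\mathrm{const}$ and quoting $\mathrm{Ker}(\mathcal{L}^z)=\mathrm{Span}(\partial_M n_\infty^M)$ from \cite{CD}. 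What your route buys is a self-contained proof of self-adjointness and compact resolvent at fixed $\mu$ (where the $(\cdot,\cdot)_{\mathcal{M}_\mu^y}$ inner product is equivalent to the $L^2_{Q_\mu}$ norm on $L^2_{Q_\mu,0}$ by Lemma \ref{lemma:Mprop}, Fredholm plus positive definiteness); what the paper's route buys is brevity, since the whole spectral analysis is already in \cite{CD}. Two points to tighten if you carry this out: the relative compactness of $\mathcal{K}_\mu$ and the justification of the integration by parts on $D({\mathcal{L}_\mu^y})$ should be written out (note that the kernel statement lives in $D({\mathcal{L}_\mu^y})$, without the zero-mass constraint, which your quadratic-form step handles since it never uses $(u,1)=0$); and your closing attribution slightly conflates two issues --- the delicate nonradial point flagged in the Remark after Theorem \ref{theorem:main} is the spectral gap constant $K_2>2$ of Theorem \ref{theorem:CD} and Corollary \ref{corollary:NT}, not the one-dimensionality of $\mathrm{Ker}(\mathcal{L}^z)$, which is part of the spectral study in \cite{CD} that both you and the paper invoke.
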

Furthermore, Proposition \ref{proposition:propofL} was proved in \cite{CD}(using the $z$ coordinate).
Campos and Dolbeault obtained the following spectral gap inequality \cite{CD}:
\begin{theorem}\label{theorem:CD} 
For all $w\in L^2_{n_\infty^M}(\RR^2)$.\\
If 
\begin{equation}\label{spectcondz}
(w,\mathcal{M}^z\partial_M n_\infty^M)_{L^2}=C_1(M)(w,1)=0
\end{equation}
then 
\begin{equation}\label{spectralgap1z}
 (\mathcal{M}^z w,w)\leq -(\mathcal{L}^z w,\mathcal{M}^z w). 
 \end{equation}
There exists $K_2>2$ such that if in addition $$(w,\mathcal{M}^z(\partial_1 n_\infty^M))_{L^2}=(w,\mathcal{M}^z(\partial_2 n_\infty^M))_{L^2}=(w,\mathcal{M}^z\Lambda n_\infty^M)_{L^2}=0$$ then
\begin{equation}\label{spectralgap2z}
  K_2(\mathcal{M}^z w,w)\leq -(\mathcal{L}^z w,\mathcal{M}^z w).
 \end{equation}
\end{theorem}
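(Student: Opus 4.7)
The plan is to develop the spectral theory of $\mathcal{L}^z$ through its factorization $\mathcal{L}^z w = \nabla \cdot(n_\infty^M \nabla \mathcal{M}^z w)$ and then conclude via the min-max principle. First I would verify that $(u,v)_{\mathcal{M}^z} := (\mathcal{M}^z u, v)_{L^2}$ is a genuine inner product on $L^2_{n_\infty^M, 0}$; this is the analogue in the $z$-variables of Lemma \ref{lemma:Mprop}, proved by interpreting $(\mathcal{M}^z v, v)$ as the second variation of the free energy $\mathcal{F}$ at $n_\infty^M$ and invoking the logarithmic Hardy-Littlewood-Sobolev inequality to obtain positivity. Integration by parts yields the basic identity
\[
-(\mathcal{L}^z w, \mathcal{M}^z w)_{L^2} = \int_{\RR^2} n_\infty^M \, |\nabla \mathcal{M}^z w|^2 \, dz \geq 0,
\]
so $\mathcal{L}^z$ is negative semi-definite with respect to $(\cdot,\cdot)_{\mathcal{M}^z}$.

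Next I would establish that $\mathcal{L}^z$ has compact resolvent, hence discrete spectrum $0 = \lambda_0 > \lambda_1 \geq \lambda_2 \geq \dots$ accumulating only at $-\infty$. The compactness follows from the exponential decay $n_\infty^M \sim e^{-|z|^2/2}$ at infinity together with standard elliptic regularity, giving compact embedding of the associated form domain into $L^2_{n_\infty^M}$. The algebraic identities of Proposition \ref{proposition:algebraic_identities} (translated from the $y$- to the $z$-scaling) immediately identify the first three eigenvalues and eigenspaces: the kernel is spanned by $\pd_M n_\infty^M$ (mass variation), the eigenvalue $-1$ has the two-dimensional eigenspace $\mathrm{Span}(\pd_1 n_\infty^M, \pd_2 n_\infty^M)$ (translations), and $-2$ has eigenfunction $\Lambda n_\infty^M$ (scaling).

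The two claimed inequalities then follow from the Courant--Fischer characterization. The condition $(w, \mathcal{M}^z \pd_M n_\infty^M)_{L^2} = C_1(M)(w,1) = 0$, which holds because $\mathcal{M}^z(\pd_M n_\infty^M)$ reduces to a constant upon differentiating the self-consistent equation for $n_\infty^M$ in $M$, precisely removes the kernel component. Expanding $w$ in the eigenbasis then yields
\[
\frac{-(\mathcal{L}^z w, \mathcal{M}^z w)}{(\mathcal{M}^z w, w)} \geq |\lambda_1| = 1,
\]
which is \eqref{spectralgap1z}. Imposing in addition $(w, \mathcal{M}^z \pd_i n_\infty^M)_{L^2} = (w, \mathcal{M}^z \Lambda n_\infty^M)_{L^2} = 0$ strips out the $-1$ and $-2$ eigenspaces, so the Rayleigh quotient is bounded below by the absolute value of the next eigenvalue, yielding \eqref{spectralgap2z} with $K_2$ equal to that value.

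The hard part is proving the strict inequality $K_2 > 2$, i.e.\ that no eigenvalue accumulates at $-2$ from below. The natural attack is Fourier decomposition in angle, $w(r,\theta) = \sum_{m \in \mathbb{Z}} w_m(r) e^{im\theta}$, which decouples $\mathcal{L}^z$ into a family of 1D Sturm--Liouville operators on $(0,\infty)$ with weight $n_\infty^M(r)$. One must show that in the radial sector ($m=0$) the third eigenvalue lies strictly below $-2$, and that for $|m| \geq 2$ every eigenvalue lies strictly below $-2$ (the $|m|=1$ sector contributes only the translation mode at $-1$). The radial case is accessible through hypergeometric-type special-function identities and was carried out rigorously in \cite{CD}; the non-radial sectors require a uniform lower bound (in $m$) on the first eigenvalue of each reduced operator, which so far has only been verified numerically, as acknowledged in the remark following Theorem \ref{theorem:main}.
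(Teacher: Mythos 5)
The first thing to say is that the paper does not actually prove this statement: Theorem \ref{theorem:CD} is imported verbatim from Campos--Dolbeault \cite{CD}, and the paper's own remark immediately after it concedes that \eqref{spectralgap2z} is rigorous only in the radial sector, the nonradial case resting on the numerics of \cite{CD}. So your outline is being measured against a citation rather than a written proof. With that caveat, your plan reproduces the intended architecture faithfully: the factorization $\mathcal{L}^z w=\nabla\cdot(n_\infty^M\nabla\mathcal{M}^z w)$ giving $-(\mathcal{L}^z w,\mathcal{M}^z w)=\int n_\infty^M|\nabla\mathcal{M}^z w|^2dz$, positivity of $(\mathcal{M}^z w,w)$ on mass-zero functions via the second variation of $\mathcal{F}$ and logarithmic HLS (the $z$-analogue of Lemma \ref{lemma:Mprop}), self-adjointness and discreteness of the spectrum (Proposition \ref{proposition:propofL}), the explicit eigenfunctions $\partial_M n_\infty^M$, $\partial_i n_\infty^M$, $\Lambda n_\infty^M$ with eigenvalues $0,-1,-2$ from the algebraic identities, and min-max; and you correctly flag that $K_2>2$ in the nonradial case is only numerical.

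The one step you should not pass off as automatic is the phrase ``immediately identify the first three eigenvalues and eigenspaces.'' The algebraic identities only exhibit these functions as eigenfunctions; they say nothing about whether other eigenvalues sit in $(-1,0)$ or in $[-2,-1)$. Ruling out spectrum in $(-1,0)$ is precisely what makes \eqref{spectralgap1z} hold with the constant $1$, and ruling out extra spectrum in $[-2,-1)$ (in every angular sector, not just the ones carrying the explicit modes) is what makes $K_2>2$ meaningful; both are the substantive content of the spectral analysis in \cite{CD}, obtained there through the Sturm--Liouville decomposition you describe, rigorously for $m=0$ and numerically for the other sectors. Your Courant--Fischer conclusion is therefore conditional on exactly the eigenvalue-ordering input that \cite{CD} supplies, and an honest write-up should state this dependence for \eqref{spectralgap1z} as well, not only for \eqref{spectralgap2z}.
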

In the radial case, \eqref{spectralgap2z} follows from the study of the spectrum of $\mathcal{L}^z$ performed in \cite{CD}. Moroever, the numerics performed in \cite{CD} (see Figure 1 in \cite{CD}) show that it is also true in the nonradial case. We hope to address this elsewhere.
We rewrite Theorem \ref{theorem:CD} in the $y$ coordinate with $y=\frac{z}{\sqrt{\mu}}$.
\begin{corollary}\label{corollary:NT} 
For all $w\in L^2_{Q_\mu}(\RR^2)$.\\
If 
\begin{equation}\label{spectcondy}
(w,{\mathcal{M}_\mu^y}(\partial_\mu Q_\mu-\frac{1}{2\mu}\Lambda Q_\mu))=C_2(\mu)(w,1)=0\end{equation}
then 
\begin{equation}\label{spectralgap1y}
 \mu({\mathcal{M}_\mu^y} w,w)\leq -({\mathcal{L}_\mu^y} w,{\mathcal{M}_\mu^y} w). 
 \end{equation}
There exists $K_2>2$ such that if in addition $$(w,{\mathcal{M}_\mu^y}(\partial_1 Q_\mu))_{L^2}=(w,{\mathcal{M}_\mu^y}(\partial_2 Q_\mu))_{L^2}=(w,{\mathcal{M}_\mu^y}\Lambda Q_\mu)_{L^2}=0$$ then   
\begin{equation}\label{spectralgap2y}
K_2\mu({\mathcal{M}_\mu^y} w,w)\leq -({\mathcal{L}_\mu^y} w,{\mathcal{M}_\mu^y} w).
 \end{equation}
\end{corollary}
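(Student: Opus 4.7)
The strategy is to reduce Corollary \ref{corollary:NT} to Theorem \ref{theorem:CD} by undoing the $\mu$-rescaling $y = z/\sqrt{\mu}$. Given $w \in L^2_{Q_\mu}(\RR^2)$, I set $W(z) := \mu^{-1} w(z/\sqrt{\mu})$. Since $Q_\mu(y) = \mu\, n_\infty^M(\sqrt{\mu}\, y)$ and $dz = \mu\, dy$, a short change-of-variable calculation gives $\|W\|_{L^2_{n_\infty^M}} = \|w\|_{L^2_{Q_\mu}}$ and $(W,1)_{L^2(dz)} = (w,1)_{L^2(dy)}$, so $w \mapsto W$ is an isometric bijection between the two weighted $L^2$ spaces that also preserves mean.

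The next step is to match the orthogonality conditions. The first condition in \eqref{spectcondy} reduces to $(w,1)_y = 0$ because $\mathcal{M}_\mu^y\bigl(\partial_\mu Q_\mu - \frac{1}{2\mu}\Lambda Q_\mu\bigr) = -\bigl(2 - M/(2\pi)\bigr)$ from Proposition \ref{proposition:algebraic_identities} is a nonzero constant for $\mu$ sufficiently small; the same reasoning (applied directly in the $z$ variables, or equivalently using the identification $M'(\mu)\mu\partial_M n_\infty^M(\sqrt{\mu} y)=\partial_\mu Q_\mu-\frac{1}{2\mu}\Lambda Q_\mu$) shows that \eqref{spectcondz} reduces to $(W,1)_z = 0$, and these coincide by the isometry. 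For the three extra orthogonalities the identities $\mathcal{M}_\mu^y \partial_i Q_\mu = -\mu y_i$ and $\mathcal{M}_\mu^y \Lambda Q_\mu = 2 - M/(2\pi) - \mu|y|^2$ (and their $n_\infty^M$-analogues in $z$) reduce them to integrals against $y_i,|y|^2$ respectively $z_i,|z|^2$, which match term by term under $z = \sqrt{\mu}\, y$.

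The analytic heart of the proof is the transformation of the operators under the mean-zero assumption. The logarithmic Newton kernel produces a spurious constant under rescaling, $\phi_W(z) = \phi_w(y) - \frac{\log\mu}{4\pi}(w,1)_y$, which vanishes once $(w,1)_y=0$, so $\mathcal{M}^z W(z) = \mathcal{M}_\mu^y w(y)$. Using the divergence form $\mathcal{L}_\mu^y w = \nabla_y \cdot(Q_\mu \nabla_y \mathcal{M}_\mu^y w)$ together with $\nabla_y = \sqrt{\mu}\,\nabla_z$ and $Q_\mu\, dy = n_\infty^M\, dz$, the Dirichlet integral scales as $(\mathcal{L}_\mu^y w, \mathcal{M}_\mu^y w)_y = -\int_{\RR^2} Q_\mu |\nabla_y \mathcal{M}_\mu^y w|^2\, dy = -\mu \int_{\RR^2} n_\infty^M |\nabla_z \mathcal{M}^z W|^2\, dz = \mu(\mathcal{L}^z W, \mathcal{M}^z W)_z$, while $(\mathcal{M}_\mu^y w, w)_y = (\mathcal{M}^z W, W)_z$ follows directly from $\mathcal{M}^z W = \mathcal{M}_\mu^y w$ by change of variable. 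Feeding $W$ into Theorem \ref{theorem:CD} and multiplying the resulting inequality by $\mu$ yields \eqref{spectralgap1y}; the sharper spectral gap \eqref{spectralgap2y} follows in the same way with the same constant $K_2$.

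The main bookkeeping hazard is the $\frac{\log\mu}{4\pi}(w,1)$ term from the Newton potential rescaling — precisely neutralized by the first orthogonality condition — together with tracking the different powers of $\mu$ produced by the Jacobian, the gradients, and the density rescaling. Beyond this careful accounting the argument is a routine change of variables, and no new analytic input is required.
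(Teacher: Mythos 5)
Your proposal is correct and coincides with the paper's intended argument: the paper simply states that Corollary \ref{corollary:NT} is Theorem \ref{theorem:CD} rewritten in the $y$-coordinate via $y=z/\sqrt{\mu}$, using the already-noted relation ${\mathcal{L}_\mu^y}(v(y))=\mu\mathcal{L}^z\bigl(\tfrac{1}{\mu}v(\tfrac{z}{\sqrt{\mu}})\bigr)$, which is exactly your rescaling $W(z)=\mu^{-1}w(z/\sqrt{\mu})$. Your additional bookkeeping (the vanishing of the $\tfrac{\log\mu}{4\pi}(w,1)$ term under the mean-zero condition, and the matching of the orthogonality conditions through the algebraic identities of Proposition \ref{proposition:algebraic_identities}) correctly fills in the details the paper leaves implicit.
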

\begin{remark}
Notice here that the conditions $(w,1)_{L^2}=(w,|\cdot|^2)_{L^2}=0$ correspond in the inner product $(\cdot,\cdot)_{\mathcal{M}_\mu^y}$ where ${\mathcal{L}_\mu^y}$ is self-adjoint to the orthogonality on $\Lambda Q_\mu$ and on $\partial_\mu Q_\mu-\frac{1}{2\mu}\Lambda Q_\mu$ the kernel of ${\mathcal{L}_\mu^y}$.
Indeed,
$$(w,\mathcal{M}_\mu^y\Lambda Q_\mu)=(2-\frac{M}{2\pi})(w,1)-\mu(w,|\cdot|^2)=0,$$
and
$$(w,\mathcal{M}_\mu^y\partial_\mu Q_\mu)=-\frac{1}{2}(w,|\cdot|^2)=0.$$
In addition, we get for free from the conservation of the center of mass that $(w,y_i)_{L^2}=0$ which corresponds to the orthogonality on $\partial_i Q_\mu$ in the inner product $(\cdot,\cdot)_{\mathcal{M}_\mu^y}$.
Indeed,
$$(w,\mathcal{M}_\mu^y\partial_i Q_\mu)=-\mu\int_{\RR^2}w y_i dy=0.$$
\end{remark}

\section{Construction of the approximate profile}

In this section, we construct an adequat approximate profile $\tilde{Q}_\mu$ that has finite second moment and $8\pi$ mass.
Recall that $\mu$ and $M$ are related by $\mu=\frac{8}{n_\infty^M(0)}$.
\begin{proposition}\label{proposition:comp1}
Let $n_\infty^M$ be the solution of \eqref{ninfiny}, then $\mu n_\infty^M(\sqrt{\mu} y)\rightarrow Q(y)$ uniformly as $\mu\rightarrow 0$. Moreover,
\begin{align}\label{phiq_qmu}
\phi_Q-\mu\frac{|y|^2}{2}<\phi_{Q_\mu}(y)-\frac{\mu |y|^2}{2}<\phi_Q(y)<\phi_{Q_\mu}(y)<0 \mbox{ for all }y\neq0,
\end{align}
\begin{align}\label{phi'q_qmu}
y\cdot\nabla\phi_{Q_\mu}(y)-\mu |y|^2<y\cdot\nabla\phi_Q(y)<y\cdot\nabla\phi_{Q_\mu}(y)<0 \mbox{ for all }y\neq0,
\end{align}
\begin{align}\label{Q_Qmu}
Q(y)e^{-\frac{\mu|y|^2}{2}}< \mu n_\infty^M(\sqrt{\mu} y)=Q_\mu(y)< Q(y), \mbox{ for all } y\in\RR^2.
\end{align}
\end{proposition}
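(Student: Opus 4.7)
The plan is to reduce the four assertions of the proposition to a single pinch on the potentials, prove that pinch via the radial ODE, and then read everything else off as a corollary. By Proposition~\ref{proposition:algebraic_identities} one has $Q_\mu = 8e^{\phi_{Q_\mu}-\mu|y|^2/2}$ and $Q = 8e^{\phi_Q}$, where $\phi_Q(y) = -2\log(1+|y|^2)$ is the unique smooth radial solution of $-\Delta\phi = 8e^\phi$ with $\phi(0)=\phi'(0)=0$. The chain \eqref{Q_Qmu} is then equivalent, after taking logarithms, to
\begin{equation*}
0 \;<\; \phi_{Q_\mu}(y) - \phi_Q(y) \;<\; \frac{\mu|y|^2}{2} \qquad (y\neq 0),
\end{equation*}
which together with the superharmonicity $\phi_{Q_\mu}<0$ (from $-\Delta\phi_{Q_\mu}=Q_\mu>0$, $\phi_{Q_\mu}(0)=0$, $\phi_{Q_\mu}\to-\infty$) yields \eqref{phiq_qmu}. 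The radial identity $y\cdot\nabla\phi(y)=-m(|y|)$ with $m(r)=\int_0^r sQ(s)\,ds$ converts \eqref{phi'q_qmu} into $0<m_\mu(r)<m_Q(r)<m_\mu(r)+\mu r^2$, which follows by integrating \eqref{Q_Qmu} against the radial measure (the upper bound uses the elementary $\int_0^r s^3 Q(s)\,ds \lesssim r^2$). Finally, uniform convergence is immediate from the pinch: $|Q-Q_\mu|\leq Q(1-e^{-\mu|y|^2/2})\leq \tfrac{\mu}{2}|y|^2Q(y)$, and $|y|^2Q(y)=8|y|^2/(1+|y|^2)^2$ attains its global maximum $2$ at $|y|=1$, so $\|Q-Q_\mu\|_{L^\infty(\RR^2)}\leq\mu\to 0$.

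For the core potential pinch I work with the radial ODE $\phi''(r)+\phi'(r)/r = -8e^{\phi(r)-\mu r^2/2}$, $\phi(0)=\phi'(0)=0$, and its equivalent first-order partial-mass form $m'(r) = 8r\exp\bigl(-\int_0^r m(s)/s\,ds -\mu r^2/2\bigr)$. A Taylor expansion at $r=0$ gives $\phi_{Q_\mu}(r) = -2r^2 + (1+\mu/4)r^4 + O(r^6)$, so $\phi_{Q_\mu}-\phi_Q\sim\frac{\mu}{4}r^4$ sits strictly inside $(0,\mu r^2/2)$ for small $r$. At infinity the subcriticality $M(\mu)<8\pi$ forces $\phi_{Q_\mu}(r)\sim -(M(\mu)/2\pi)\log r$ while $\phi_Q(r)\sim -4\log r$, so $\phi_{Q_\mu}-\phi_Q\sim(4-M(\mu)/2\pi)\log r\to +\infty$ logarithmically, well below $\mu r^2/2$. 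To globalize the pinch I argue by continuity in $\mu$ combined with a sliding argument on the partial-mass ODE: smoothness of $\mu\mapsto\phi_{Q_\mu}$ (justified later in Section~5 via the ODE) lets me define $\mu^\ast$ as the supremum of parameters for which the strict pinch holds on $(0,\infty)$; if $\mu^\ast<+\infty$, continuity produces a first interior saturation at some $r_0>0$, and the contradiction comes from combining the difference-ODE $-\Delta w = Q(e^w-1)+2\mu$ with $w=\phi_{Q_\mu}-\mu r^2/2-\phi_Q$ with the derivative identity $m'_\mu(r_0)/m'_Q(r_0)=e^{w(r_0)}$, which together prevent either end of the pinch from saturating first without a strict violation of the partial-mass monotonicity.

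The main obstacle is the globalization step above: at a hypothetical first interior saturation, the sign of the Laplacian prescribed by the ODE is already consistent with the local-extremum sign (for instance $\Delta w(r_0)=-2\mu<0$ matches $\Delta w\leq 0$ at a local max from below), so a pointwise maximum-principle argument alone is insufficient. The resolution requires exploiting the first-order partial-mass reformulation, which converts the nonlocal Poisson coupling into a scalar ODE where a quantitative sliding comparison via a Grönwall-type estimate on $\int_0^{r_0}(m_Q-m_\mu)/s\,ds$ becomes available and rules out simultaneous saturation of both ends of the pinch, using crucially the strict positivity of the mass deficit $8\pi - M(\mu)$.
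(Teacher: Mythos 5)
Your reductions are sound and essentially parallel the paper: the Taylor expansion at the origin (the paper computes $\phi_{Q_\mu}''(0)=-4$, $\phi^{(3)}_{Q_\mu}(0)=0$, $\phi^{(4)}_{Q_\mu}(0)=6(4+\mu)$, matching your $-2r^2+(1+\mu/4)r^4$), the equivalence of \eqref{Q_Qmu} with the potential pinch, the deduction of \eqref{phi'q_qmu} from \eqref{Q_Qmu} via partial masses together with $\int_0^r s^3Q\,ds\le 2r^2$ (a step the paper leaves to the reader), and the uniform convergence from $|y|^2Q\le 2$ are all correct. The problem is that the heart of the proposition — the strict pinch $\phi_Q-\mu r^2/2<\phi_{Q_\mu}-\mu r^2/2<\phi_Q$ on all of $(0,\infty)$ — is never actually established. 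You correctly diagnose that a pointwise maximum principle at a first saturation point is inconclusive, but the proposed fix ("sliding in $\mu$" plus a "Gr\"onwall-type estimate on $\int_0^{r_0}(m_Q-m_\mu)/s\,ds$, using the mass deficit $8\pi-M(\mu)$") is only a gesture: it is not carried out, it addresses "simultaneous saturation of both ends" rather than the actual task of excluding each end saturating first, and the role you assign to the mass deficit is unsubstantiated (the paper's argument never uses it). Concretely, the delicate case is the lower saturation $\psi_\mu(r_0)=\phi_Q(r_0)$ with $\psi_\mu=\phi_{Q_\mu}-\mu r^2/2$: feeding the available bound $Q-Q_\mu\lesssim \mu s^2$ into the integrated ODE only rules out saturation for bounded $r_0$ (roughly $r_0^2<2$), so some genuinely new input is needed. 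The paper supplies it by comparing $\psi_\mu$ with the explicit auxiliary function $\tilde\psi_\mu(r)=-8\int_0^{r}\frac{1}{r'}\int_0^{r'}\tau e^{\phi_Q(\tau)-\mu\tau^2/2}d\tau\,dr'-\mu r^2/2$, noting $\tilde\psi_0=\phi_Q$, and proving $\partial_\mu\tilde\psi_\mu<0$ through the closed-form computation $\frac{2}{r_0}\bigl(-1+\frac{1}{1+r_0^2}+\log(1+r_0^2)\bigr)-r_0<0$; nothing in your proposal replaces this step. (The upper saturation $\phi_Q(r_0)=\phi_{Q_\mu}(r_0)$ is easy by the monotone double-integral representation \eqref{eqn:comparint1}, essentially the differentiated form of what you call partial-mass monotonicity.)

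A second, structural issue: your globalization slides in $\mu$ and invokes smoothness of $\mu\mapsto\phi_{Q_\mu}$ "justified later in Section 5". In the paper, regularity in $\mu$ (the Gronwall bounds on $\partial_\mu\sigma$ in Proposition \ref{proposition:comp2}) is derived \emph{using} the comparison bounds of the present proposition, so leaning on it here is circular, and in any case unnecessary: the paper's contradiction argument is run at fixed $\mu$, with a first crossing in $r$ only. To repair your proof you should drop the sliding in $\mu$, keep your first-crossing-in-$r$ setup, dispose of the upper saturation by the strict monotonicity of $r\mapsto\phi_{Q_\mu}-\phi_Q$ (which follows from $Q>Q_\mu$ on $(0,r_0)$), and then supply an explicit argument for the lower saturation along the lines of the paper's $\tilde\psi_\mu$ computation.
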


\begin{proof}[Proof of Proposition \ref{proposition:comp1}]
We set $r=|y|$ and recall that $\phi_Q$ is characterized by
\begin{equation}\label{phiQ1}
-\phi_Q''-\frac{1}{r}\phi_Q'=8e^{\phi_Q}=Q \mbox{ with } \phi_Q(0)=0 \mbox{ and } \phi_Q'(0)=0.
\end{equation}
The unique solution of \eqref{phiQ1} is given by $\phi_Q(y)=-2\log(1+|y|^2)$.
From Proposition \ref{proposition:algebraic_identities}, we have also
$$-\Delta\phi_{Q_\mu}=8e^{\phi_{Q_\mu}}e^{-\frac{\mu r^2}{2}}=Q_\mu.$$
We will prove that
$$\phi_Q-\frac{\mu r^2}{2}< \phi_{Q_\mu}-\frac{\mu r^2}{2}<\phi_Q<\phi_{Q_\mu}<0, \quad\forall r\neq 0.$$
After taking the exponantial, we get that 
$$Qe^{-\frac{\mu |y|^2}{2}}< Q_\mu(y)< Q(y).$$
Indeed,
\begin{eqnarray}\label{eqn:compar1}
\left\{
\begin{array}{ll}
\Delta\phi_Q+8e^{\phi_Q} =0 \\
\Delta\phi_{Q_\mu}+8e^{\phi_{Q_\mu}}e^{-\frac{\mu r^2}{2}}=0
\end{array}
\right.
\end{eqnarray}
with $\phi_Q(0)=\phi_{Q_\mu}(0)=0=\phi'_Q(0)=\phi_{Q_\mu}'(0)$. 
Since $\phi_{Q_\mu}(0)=0=\phi_Q(0)$ then from 
\begin{eqnarray}
\left\{
\begin{array}{ll}
\phi_Q(r)=-8\int_0^r\frac{1}{r'}\int_0^{r'} \tau e^{\phi_Q(\tau)}d\tau dr' \\
\phi_{Q_\mu}(r)=-8\int_0^r\frac{1}{r'}\int_0^{r'} \tau e^{\psi_\mu(\tau)}d\tau dr',
\end{array}
\right.
\end{eqnarray}
we deduce easily that
 $\phi_Q(y)<0$,\quad $\phi'_Q(y)<0$ and  $\phi_{Q_\mu}(y)<0$,\quad $\phi'_{Q_\mu}(y)<0$.
It follows that $\phi_{Q_\mu}$ and $\phi_Q$ are radially symetric and decreasing.
Set $\psi_\mu(y)=\phi_{Q_\mu}-\frac{\mu|y|^2}{2}$.
\begin{itemize}
\item{\textbf{Proof of $\phi_Q-\frac{\mu r^2}{2}< \phi_{Q_\mu}-\frac{\mu r^2}{2}<\phi_Q<\phi_{Q_\mu}<0$  for $r$ in a neighborhood of 0.}}\\
We first expand $\phi''_{Q_\mu},\phi'_{Q_\mu},\phi_Q,\phi_{Q_\mu},\psi_\mu$ around 0:
$$\phi_Q(r)=-2r^2+r^4+o(r^4),$$
$$\phi''_{Q_\mu}(r)=\phi''_{Q_\mu}(0)+r\phi^{(3)}_{Q_\mu}(0)+\frac{r^2}{2}\phi^{(4)}_{Q_\mu}(0)+O(r^3),$$
$$\phi'_{Q_\mu}(r)=r\phi''_{Q_\mu}(0)+\frac{r^2}{2}\phi^{(3)}_{Q_\mu}(0)+\frac{r^3}{6}\phi^{(4)}_{Q_\mu}(0)+O(r^4),$$
$$\phi_{Q_\mu}(r)=\sum_{n=2}^{4}\frac{r^n}{n!}\phi_{Q_\mu}^{(n)}(0)+o(r^4),$$
and
$$\psi_\mu(r)=\sum_{n=0}^{4}\frac{r^n}{n!}\psi_\mu^{(n)}(0)+o(r^4).$$
Then, to compare the 3 functions we just need to find the values of the consecutive derivatives of $\phi_{Q_\mu}$ at 0. To do so, we plug the expansions of 
 $\phi''_{Q_\mu},\phi'_{Q_\mu}$ and $\phi_{Q_\mu}$ arround zero in 
 $$-\phi_{Q_\mu}''-\frac{1}{r}\phi_{Q_\mu}'=8e^{\phi_{Q_\mu}-\mu\frac{r^2}{2}}$$ we get,
$$\phi_{Q_\mu}''(0)=-4,$$
$$\phi^{(3)}_{Q_\mu}(0)=0,$$
and
$$\phi^{(4)}_{Q_\mu}(0)=6(4+\mu).$$
The conclusion follows by substituing the values of the derivatives of $\phi_{Q_\mu}$ in the expansion of $\phi_{Q_\mu}$ and $\psi_\mu$ arround 0.
\item{\textbf{Proof of $\phi_Q-\frac{\mu r^2}{2}< \phi_{Q_\mu}-\frac{\mu r^2}{2}<\phi_Q<\phi_{Q_\mu}<0$, for all $r>0$.}}\\
Let's argue by contradiction, suppose there exists $r_0>0$ such that $\phi_Q(r_0)=\phi_{Q_\mu}(r_0)$ and $\phi_{Q_\mu}(r)>\phi_Q(r)>\psi_\mu(r)$ for all $0<r<r_0$. 
We use that $\phi_Q$ and $\phi_{Q_\mu}$ are solutions of
\begin{eqnarray}\label{eqn:compar1}
\left\{
\begin{array}{ll}
(r\phi_Q')'+8re^{\phi_Q} =0 \\
(r\phi_{Q_\mu}')'+8re^{\phi_{Q_\mu}-\frac{\mu r^2}{2}}=0,
\end{array}
\right.
\end{eqnarray}
which can be solved, using the conditions at 0:
\begin{eqnarray}\label{eqn:comparint1}
\left\{
\begin{array}{ll}
\phi_Q(r)=-8\int_0^r\frac{1}{r'}\int_0^{r'} \tau e^{\phi_Q(\tau)}d\tau dr' \\
\phi_{Q_\mu}(r)=-8\int_0^r\frac{1}{r'}\int_0^{r'} \tau e^{\psi_\mu(\tau)}d\tau dr'.
\end{array}
\right.
\end{eqnarray}
It follows that
$$\phi_Q(r)-\phi_{Q_\mu}(r)=-8\int_0^r\frac{1}{r'}\int_0^{r'} \tau[e^{\phi_Q(\tau)}-e^{\psi_\mu(\tau)}]d\tau dr',$$
and since $e^{\phi_Q}-e^{\psi_\mu}>0$ for $r\in(0,r_0)$, we get
$$\phi_{Q}(r_0)-\phi_{Q_\mu}(r_0)=0=-8\int_0^{r_0}\frac{1}{r'}\int_0^{r'} \tau[e^{\phi_Q(\tau)}-e^{\psi_\mu(\tau)}]d\tau dr'<0,$$
which is a contradiction.
Now suppose there exists $r_0$ such that $\phi_Q(r_0)=\psi_\mu(r_0)$ and $\phi_{Q_\mu}(r)>\phi_Q(r)>\psi_\mu(r)$ for all $0<r<r_0$.
Since,
$$\phi_{Q_\mu}(r)=-8\int_0^r\frac{1}{r'}\int_0^{r'} \tau e^{\psi_\mu(\tau)}d\tau dr',$$
and $\psi_\mu=\phi_{Q_\mu}-\mu\frac{r^2}{2}$, it follows
$$\psi_{\mu}(r)=-8\int_0^r\frac{1}{r'}\int_0^{r'} \tau[e^{\psi_\mu(\tau)}]d\tau dr'-\mu\frac{r^2}{2}.$$
Then, using that $\phi_Q(r)<\phi_{Q_\mu}(r)$, we get
$$\psi_\mu(r_0)<\underbrace{-8\int_0^{r_0}\frac{1}{r'}\int_0^{r'} \tau[e^{\phi_Q(\tau)-\mu\frac{\tau^2}{2}}]d\tau dr'-\mu\frac{r_0^2}{2}}_{\tilde{\psi}_\mu(r_0)}.$$
 Notice that for $\mu=0$ we have $\tilde{\psi}_0(r_0)=\phi_Q(r_0)$.
If we prove that ${\partial_\mu \tilde{\psi}_\mu}(r_0)<0$, we deduce that $\psi_\mu(r_0)<\tilde{\psi}_0(r_0)=\phi_Q(r_0)$ which is the desired contradiction. Since,
$${\partial_r\partial_\mu \tilde{\psi}_\mu}(r_0)=\frac{4}{r_0}\int_0^{r_0}\tau^3[e^{\phi_Q(\tau)-\mu\frac{\tau^2}{2}}]d\tau-r_0<\partial_r{\partial_\mu \tilde{\psi}_\mu}_{|_{\mu=0}}=\frac{4}{r_0}\int_0^{r_0}\tau^3e^{\phi_Q}d\tau-r_0,$$ then it suffices to prove that $\partial_{r}{\partial_\mu \tilde{\psi}_\mu}_{|_{\mu=0}}<0.$
Indeed, one can easily calculate
$$\partial_{r}{\partial_\mu \tilde{\psi}_\mu}_{|_{\mu=0}}(r_0)=\frac{2}{r_0}\Big(-1+\frac{1}{1+r_0^2}+\log(1+r_0^2)\Big)-r_0<0, \mbox{ for all } r_0>0,$$
which concludes the proof.
\end{itemize}
The proof of \eqref{phi'q_qmu} is similar to \eqref{phiq_qmu}, and is left to the reader. Note that \eqref{phi'q_qmu} implies \eqref{phiq_qmu}.
\end{proof}
For the rest of our estimations we will need the following proposition. Recall that $T_1$ was also used in \cite{RS}.
\begin{proposition}\label{proposition:comp}
Let $T_1$ be the solution of $\mathcal{L}T_1=\nabla\cdot(Q\nabla\mathcal{M}T_1)=\Lambda Q$, then we have 
$$\phi_{T_1}(y)=O(r^4\log <r>)\indic_{\{ r\leq 1\}}+\Big[6(\log r)^2+O\Big(\frac{(\log r)^2}{r^2}\Big)\Big]\indic_{\{ r\geq 1\}}.$$
$$\phi_{T_1}'(r)=O(r^3\log <r>)\indic_{\{ r\leq 1\}}+O\Big(\frac{\log r}{r}\Big)\indic_{\{ r\geq 1\}}.$$
\end{proposition}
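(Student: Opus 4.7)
The plan is to exploit the radial structure in order to reduce the PDE $\mathcal{L}T_1=\Lambda Q$ to a linear second-order ODE for $\phi_{T_1}$ alone, and then to read off the asymptotics by variation of parameters against the explicit homogeneous solutions of $-\Delta-Q$.

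First, in radial variables $\Lambda Q = \frac{1}{r}(r^2 Q)'$ and $\mathcal{L}T_1=\frac{1}{r}\bigl(rQ(\mathcal{M}T_1)'\bigr)'$. Integrating once from $0$ to $r$, using that both primitives vanish at the origin since $Q(0)$ is finite and $\mathcal{M}T_1$ is smooth there, one obtains $(\mathcal{M}T_1)'(r)=r$, hence $\mathcal{M}T_1(r)=\tfrac{r^2}{2}+C$ for a constant $C$ to be fixed by normalization (the natural choice being $\phi_{T_1}(0)=\phi_{T_1}'(0)=0$, which matches the identification $T_1=-\partial_\mu Q_\mu|_{\mu=0}$ modulo $\ker\mathcal{L}$ coming from the algebraic identity $\mathcal{L}(\partial_\mu Q_\mu)=-\Lambda Q_\mu$ of Proposition \ref{proposition:algebraic_identities}). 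Combining $\mathcal{M}T_1=T_1/Q-\phi_{T_1}$ with $-\Delta\phi_{T_1}=T_1$ then converts the problem into the single scalar radial equation
\[
-\Delta\phi_{T_1}-Q\phi_{T_1}=\Big(\tfrac{r^2}{2}+C\Big)Q,\qquad\phi_{T_1}(0)=\phi_{T_1}'(0)=0.
\]

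The homogeneous operator $-\Delta-Q$ admits two explicit radial solutions: the dilation mode $\psi_{\mathrm{dil}}(r)=-2(r^2-1)/(r^2+1)$, coming from differentiating the Liouville family $\phi_\lambda=2\log\lambda-2\log(1+\lambda^2r^2)$ at $\lambda=1$, and a second solution $\psi_2$ produced by reduction of order: the Abel identity $rW(\psi_{\mathrm{dil}},\psi_2)=\mathrm{const}$ gives $\psi_2$ as an explicit quadrature which, after partial fractions, equals $\frac{1}{r^2+1}-\frac{(r^2-1)\log r}{2(r^2+1)}$, so that $\psi_2$ diverges like $\log r$ both as $r\to 0$ and as $r\to\infty$. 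Variation of parameters then produces the unique solution with vanishing Cauchy data at $0$:
\[
\phi_{T_1}(r)=-\psi_{\mathrm{dil}}(r)\int_0^r \psi_2(s)\Big(\tfrac{s^2}{2}+C\Big)Q(s)\,s\,ds+\psi_2(r)\int_0^r\psi_{\mathrm{dil}}(s)\Big(\tfrac{s^2}{2}+C\Big)Q(s)\,s\,ds,
\]
the integrals converging at $s=0$ thanks to the factor $sQ(s)\sim 8s$ and the mild logarithmic singularity of $\psi_2$. From this representation the announced estimates are extracted by expanding the integrands: near $r=0$, Taylor expansion of $\psi_{\mathrm{dil}},\psi_2,Q$ shows each integral is $O(r^4)$ with at worst a $\log r$ factor, and after multiplication by the bounded prefactors $\psi_{\mathrm{dil}}(r),\psi_2(r)$ one recovers the $O(r^4\log<r>)$ bound on $\{r\le 1\}$. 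At infinity one uses $\psi_{\mathrm{dil}}\to-2$, $\psi_2\sim-\tfrac12\log r$ and $\tfrac{s^2}{2}Q(s)\sim 4/s^2$; the two integrands then behave respectively like $\log s/s$ and $1/s$, so their primitives grow like $\log^2 r$ and $\log r$, and summing the two contributions yields the leading $(\log r)^2$ behavior, the $O(1/r^2)$ corrections to the expansions of $\psi_{\mathrm{dil}},\psi_2,Q$ contributing precisely the $O((\log r)^2/r^2)$ remainder on $\{r\ge 1\}$. The bounds on $\phi_{T_1}'$ follow most transparently from the radial Green's formula $\phi_{T_1}'(r)=-\frac{1}{r}\int_0^r sT_1(s)\,ds$ with $T_1=Q(\tfrac{r^2}{2}+C+\phi_{T_1})$: once $\phi_{T_1}$ is controlled by the bounds just established, the integrand behaves like $s^3$ near $0$ and like $\log^2 s/s$ at infinity, directly yielding the advertised estimates.

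The main technical obstacle is that $\Lambda Q\in\ker\mathcal{L}$ at $\mu=0$ by Proposition \ref{proposition:algebraic_identities}, so the inversion $\mathcal{L}^{-1}\Lambda Q$ sits at a bifurcation point: this is exactly why the secular logarithmic growth $\log^2 r$ appears in $\phi_{T_1}$ at infinity rather than the decay one might naively hope for, and it is why $T_1\notin L^2_Q$. Tracking the precise leading coefficient through the variation-of-parameters integrals requires careful bookkeeping of cancellations, and the additive constant $C$ in $\mathcal{M}T_1$ (together with the free $\Lambda Q$ and $\partial_i Q$ kernel modes that can be added to $T_1$) must be fixed consistently with the normalization adopted for $\tilde{Q}_\mu$ in Section~4.
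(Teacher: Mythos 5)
Your proposal is essentially the paper's proof: the paper likewise reduces $\mathcal{L}T_1=\Lambda Q$ to $\mathcal{M}T_1=\frac{r^2}{2}$, converts $-\Delta\phi_{T_1}=T_1$ into the scalar radial problem $-\phi_{T_1}''-\frac{1}{r}\phi_{T_1}'-Q\phi_{T_1}=\frac{r^2}{2}Q$ with vanishing Cauchy data, inverts $-\Delta-Q$ by variation of parameters against the explicit homogeneous pair $f_0=\frac{r^2-1}{r^2+1}$, $f_1=\frac{(r^2-1)\log r-2}{r^2+1}$ (your $\psi_{\mathrm{dil}}=-2f_0$ and $\psi_2=-\frac12 f_1$, so your representation formula coincides with the paper's \eqref{invop} after rescaling), and reads off the asymptotics from the integral representation. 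The only real variations are cosmetic and fine: you derive the homogeneous solutions (Liouville dilation mode plus reduction of order) where the paper merely states them, and you bound $\phi_{T_1}'$ from $r\phi_{T_1}'(r)=-\int_0^r sT_1(s)\,ds$ instead of differentiating the variation-of-parameters formula as the paper does.

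Two points in your write-up need repair. First, the constant $C$ in $\mathcal{M}T_1=\frac{r^2}{2}+C$ is not fixed by imposing $\phi_{T_1}(0)=\phi_{T_1}'(0)=0$: for every $C$ the ODE has a unique zero-data solution, and the zero-data solution of $(-\Delta-Q)\phi=CQ$ is $-\frac{2Cr^2}{1+r^2}$, of size $r^2$ near the origin; so unless $C=0$ your step ``each integral is $O(r^4)$'' and hence the $O(r^4\log <r>)$ bound on $\{r\leq1\}$ fail. You need the paper's normalization $T_1(0)=\nabla T_1(0)=0$ (equivalently $\mathcal{M}T_1(0)=0$), which forces $C=0$; deferring $C$ to ``consistency with Section 4'' leaves this gap open. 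Second, in the $\phi_{T_1}'$ step the integrand $sT_1(s)=sQ\bigl(\frac{s^2}{2}+\phi_{T_1}\bigr)$ behaves at infinity like $\frac{4}{s}$, coming from $\frac{s^3}{2}Q$, while the piece $sQ\phi_{T_1}=O\bigl((\log s)^2 s^{-3}\bigr)$ is integrable; your stated behavior ``$\log^2 s/s$'' would only yield $O\bigl(\frac{(\log r)^3}{r}\bigr)$, not the claimed $O\bigl(\frac{\log r}{r}\bigr)$. Finally, like the paper's own proof, you do not track the leading coefficient of $(\log r)^2$; this is harmless for how the proposition is used later (only the bounds $|\phi_{T_1}|\lesssim\min(r^4\log<r>,(\log r)^2)$ and $|\phi_{T_1}'|\lesssim\min(r^3\log<r>,\frac{\log r}{r})$ are invoked), but the precise ``$6(\log r)^2$'' part of the statement is not established by your argument, and the paper does not display that bookkeeping either.
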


\begin{proof}
First we notice that $\nabla\cdot(Q\nabla\mathcal{M}T_1)=\nabla\cdot(y Q)$, with $T_1(0)=0$ and $\nabla T_1(0)=0$ which implies $\mathcal{M}T_1=\frac{|y|^2}{2}$.
And since $T_1$ is radial and $-\Delta \phi_{T_1}= T_1$ we deduce that $\phi_{T_1}$ is solution of
$$L\phi_{T_1}=-\phi_{T_1}''-\frac{1}{r}\phi_{T_1}'-Q\phi_{T_1}=Q\frac{r^2}{2},$$
with $\phi_{T_1}(0)=0,\phi_{T_1}'(0)=0$.
To estimate $\phi_{T_1}$ we need to invert the operator $L$.
Indeed, the Green's function of $L$ is explicit and the set of radial solutions to the homogeneous problem
$$L f=0$$
is spanned by 
$$f_0=1-\frac{2}{1+r^2},\quad f_1=\frac{r^2\log(r)-2-\log(r)}{1+r^2},$$
with Wronskian 
$$W=f'_1f_0-f_1f'_0=\frac{1}{r}.$$
Hence a solution of 
$$Lf=g, \mbox{ with } f(0)=f'(0)=0$$
is given by
\begin{align}\label{invop}
f(r)=-f_0(r)\int_0^rgf_1\tau d\tau+f_1(r)\int_0^r gf_0\tau d\tau.
\end{align}
It follows that 
$$\phi_{T_1}(r)=-\frac{f_0(r)}{2}\int_0^rQf_1(\tau)\tau^3d\tau+\frac{f_1(r)}{2}\int_0^rf_0(\tau)Q\tau^3d\tau.$$
Hence,
$$\phi_{T_1}(y)=O(r^4\log <r>)\indic_{\{ r\leq 1\}}+\Big[6(\log r)^2+O\Big(\frac{(\log r)^2}{r^2}\Big)\Big]\indic_{\{ r\geq 1\}}.$$
Now to estimate $\phi_{T_1}'$ we use that
$$\phi_{T_1}'(r)=-\frac{f'_0(r)}{2}\int_0^rQf_1(\tau)\tau^3d\tau+\frac{f'_1(r)}{2}\int_0^rf_0(\tau)Q\tau^3d\tau,$$
with
$$f_1'(r)=\frac{4r^2\log r+r^4+4r^2-1}{8r}Q \mbox{ and } f'_0(r)=\frac{1}{2}rQ.$$
We finally deduce that
$$\phi_{T_1}'(r)=O(r^3\log <r>)\indic_{\{ r\leq 1\}}+O(\frac{\log r}{r})\indic_{\{ r\geq 1\}},$$
which concludes the proof.
\end{proof}
Now we write $Q_\mu$ explicitely in terms of $Q$:
\begin{proposition}\label{proposition:approxQmu}
There exists $\sigma:=\sigma(\mu,r)$  such that 
\begin{eqnarray}\label{approxphiqmu}
\phi_{Q_\mu}=\phi_Q+\mu\phi_{T_1}+\sigma 
\end{eqnarray}
where $\sigma$ satisfies
\begin{align}\label{sigma}
|\sigma(\mu,r)|\lesssim\min\Big( \mu^2r^2|\log <r>|,\mu(\log <r>)^2\Big).
\end{align} 
Moreover,
\begin{eqnarray}\label{approxgradphiqmu}
\phi'_{Q_\mu}=\phi'_Q+\mu\phi'_{T_1}+\partial_r\sigma, 
\end{eqnarray}
with
\begin{align}\label{drsigma}
|\partial_r\sigma|\lesssim\min\Big(\mu^2r|\log <r>|,\mu\frac{|\log <r>|}{r}\Big).
\end{align} 
In addition, we obtain the following expansion of $Q_\mu$:
\begin{eqnarray}\label{approxQmu}
Q_\mu=Qe^{\mu\phi_{T_1}-\frac{\mu r^2}{2}+\sigma(\mu,r)}.
\end{eqnarray}
\end{proposition}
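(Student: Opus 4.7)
The approach is to derive an inhomogeneous linear equation for $\sigma$ and invert it using the Green's function of $L:=-\Delta-Q$ made explicit in Proposition~\ref{proposition:comp}.

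First, I would combine the three identities $-\Delta\phi_{Q_\mu}=8e^{\phi_{Q_\mu}-\mu r^2/2}$, $-\Delta\phi_Q=Q=8e^{\phi_Q}$, and $-\Delta\phi_{T_1}=T_1=Q(\phi_{T_1}+r^2/2)$ (this last one by using $\mathcal{M}T_1=r^2/2$, which appears in the proof of Proposition~\ref{proposition:comp}). Substituting $\phi_{Q_\mu}=\phi_Q+\mu\phi_{T_1}+\sigma$ into the equation for $\phi_{Q_\mu}$ and Taylor expanding the exponential, the cancellations provided by $L\phi_{T_1}=Qr^2/2$ reduce the identity to
\begin{equation*}
L\sigma=-\Delta\sigma-Q\sigma=Q\,R(\mu,r,\sigma),
\end{equation*}
where $R$ is an explicit remainder quadratic in the small quantities $\mu(\phi_{T_1}-r^2/2)$ and $\sigma$; in the regime $r\lesssim 1/\sqrt{\mu}$ one has pointwise $R=O\bigl(\mu^2 <r>^4(\log <r>)^2\bigr)$.

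Second, I would invert $L$ via the representation \eqref{invop} with the explicit fundamental solutions $f_0,f_1$ of Proposition~\ref{proposition:comp}. The normalizations $\phi_Q(0)=\phi_Q'(0)=\phi_{Q_\mu}(0)=\phi_{Q_\mu}'(0)=\phi_{T_1}(0)=\phi_{T_1}'(0)=0$ ensure $\sigma(0)=\sigma'(0)=0$, so the formula applies directly. Combining the pointwise bound on $L\sigma$ with the decay $Q\lesssim <r>^{-4}$ and the growth $f_0=O(1)$, $f_1=O(\log <r>)$ at infinity, a direct estimation of the two integrals in \eqref{invop} yields $|\sigma|\lesssim\mu^2 r^2|\log <r>|$. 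The companion bound $|\sigma|\lesssim\mu(\log <r>)^2$ handles the complementary regime $r\gtrsim 1/\sqrt{\mu}$, where the Taylor expansion is no longer legitimate; it is extracted from Proposition~\ref{proposition:comp1} (which gives $|\phi_{Q_\mu}-\phi_Q|\lesssim \mu r^2/2$) combined with a direct asymptotic analysis of the ODE $-\phi_{Q_\mu}''-\phi_{Q_\mu}'/r=8e^{\phi_{Q_\mu}-\mu r^2/2}$ at infinity, which refines the comparison to the logarithmic scale and matches $\mu\phi_{T_1}$.

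Third, the derivative estimate \eqref{drsigma} comes from differentiating the representation formula and substituting $f_0'=\tfrac{1}{2}rQ$ and the explicit $f_1'$ from the proof of Proposition~\ref{proposition:comp}; both carry an extra $1/r$ decay at infinity compared to $f_0,f_1$, which converts the $|\log <r>|$ factor in the bound on $\sigma$ into the sharper $|\log <r>|/r$ factor in the bound on $\partial_r\sigma$. The pointwise representation \eqref{approxQmu} then follows immediately from $Q_\mu=8e^{\phi_{Q_\mu}-\mu r^2/2}$ together with $Q=8e^{\phi_Q}$ and the expansion \eqref{approxphiqmu}. The main obstacle I expect is that the remainder $R(\mu,r,\sigma)$ depends on $\sigma$ itself through the Taylor expansion, so the Green's function estimate must be paired with a contraction or bootstrap step in a suitably weighted space; a secondary subtlety is the matching of the two bounds near $r\sim 1/\sqrt{\mu}$, where neither is obviously superior and one must verify that the two regimes glue into the stated single bound $\min(\mu^2 r^2|\log <r>|,\mu(\log <r>)^2)$.
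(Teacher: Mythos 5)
Your skeleton is the same as the paper's: write $\phi_{Q_\mu}=\phi_Q+\mu\psi_1+\sigma$, observe that choosing $\psi_1$ with $(\Delta+Q)\psi_1=Q\frac{r^2}{2}$ forces $\psi_1=\phi_{T_1}$, and solve $L\sigma=g$ by the explicit representation \eqref{invop} with $f_0,f_1$; the derivative bound and \eqref{approxQmu} then follow exactly as you say. But you introduce two complications that the paper's proof shows are unnecessary, and one of them, as you state it, is a genuine gap. First, the dependence of the source on $\sigma$ does not require any contraction or bootstrap: by Proposition \ref{proposition:comp1} one knows a priori that $0\le \phi_{Q_\mu}-\phi_Q=\mu\phi_{T_1}+\sigma\le \mu\frac{r^2}{2}$ for all $r$, so writing $a=\mu\phi_{T_1}+\sigma-\mu\frac{r^2}{2}\in[-\mu\frac{r^2}{2},0]$ the Taylor remainder satisfies $|e^{a}-1-a|\lesssim\min(\mu^2r^4,\mu r^2)$ pointwise for every $r$, with no unknown quantity left on the right-hand side. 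Consequently $|g|\lesssim\min(\mu^2r^4,\mu r^2)\,Q$ uniformly, and both bounds in \eqref{sigma} come out of the single formula \eqref{invop}: using $|g|\lesssim\mu^2 r^4Q$, $|f_0|\le1$, $|f_1|\lesssim\log<r>$ gives $|\sigma|\lesssim\mu^2r^2|\log<r>|$, while using $|g|\lesssim\mu r^2 Q$ gives $|\sigma|\lesssim\mu(\log<r>)^2$. There is therefore no splitting at $r\sim1/\sqrt\mu$, no gluing issue, and no separate ``asymptotic analysis of the ODE at infinity''; that last step of yours is only sketched and, as written, is not a proof of the second bound, so if you keep your route you must actually carry it out — this is the real gap in your proposal.

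A second, quantitative point: your source bound $R=O(\mu^2<r>^4(\log<r>)^2)$ carries spurious logarithms. The exponent difference is controlled by $\mu r^2/2$ (not by $\mu(\log<r>)^2$, which is smaller anyway in the relevant range), so the quadratic remainder is $O(\mu^2r^4)$ with no logs; if you feed your bound with the extra $(\log<r>)^2$ into \eqref{invop} you get $|\sigma|\lesssim\mu^2r^2(\log<r>)^3$, which is weaker than the stated \eqref{sigma}. Tracking the remainder as above removes this loss. With these two corrections your argument coincides with the paper's proof.
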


\begin{proof}
We write \eqref{phiq_qmu} in a more precise way.
We introduce $\psi_1:=\psi_1(r)$ and $\sigma:=\sigma(r,\mu)$ such that
$$\phi_{Q_\mu}=\phi_Q+ \mu \psi_1(r)+\sigma(\mu,r).$$
Hence, from \eqref{eqphiqmu} we have :
$$-\Delta\phi_Q-\mu \Delta\psi_1-\Delta \sigma=Qe^{\mu\psi_1-\frac{\mu r^2}{2}+\sigma(\mu,r)}.$$
By \quad $0\leq \mu \psi_1+\sigma(\mu,y)\leq \mu\frac{r^2}{2}$,
we get that\quad $e^{\mu\psi_1+\sigma-\frac{\mu r^2}{2}}=1+\mu\psi_1-\mu\frac{r^2}{2}+\sigma(\mu,y)+O\Big(\min\Big((\mu^2 r^4),(\mu\frac{r^2}{2})\Big)\Big)$.
Hence,
$$-\mu\Big[ (\Delta+Q)\psi_1-Q\frac{r^2}{2}\Big]-(\Delta+Q)\sigma=
O\Big(\min\Big((\mu^2 r^4),(\mu\frac{r^2}{2})\Big)\Big)Q:=g(\mu,r).$$
If we select $\psi_1$ such that $(\Delta+Q)\psi_1-Q\frac{r^2}{2}=0$, then $\psi_1=\phi_{T_1}$ and we get that
$$-(\Delta+Q)\sigma=L\sigma=g(\mu,r).$$
From \eqref{invop}, we obtain that
\begin{align}\label{sigma2}
\sigma(\mu,r)=-f_0(r)\int_0^rgf_1\tau d\tau+f_1(r)\int_0^r gf_0\tau d\tau.
\end{align}
Hence,
$$\sigma(\mu,r)\lesssim\min\Big( \mu^2r^2|\log <r>|,\mu(\log r)^2\Big).$$
To conclude, \eqref{drsigma} follows easily by differentiating \eqref{sigma2} with respect to $r$.
\end{proof}
We derive in the following Proposition bounds on $\partial_\mu Q_\mu$ and $\phi_{\partial_\mu Q_\mu}$.
\begin{proposition}\label{proposition:comp2}
Let $Q_\mu$ be the solution of \eqref{eq_Q_mu},
then
\begin{align}\label{phiqdmu}
\phi_{\partial_\mu Q_\mu}=
\phi_{T_1}+\partial_\mu \sigma
\end{align}
where 
\begin{align}\label{dmusigma}
|\partial_\mu\sigma|&\lesssim \min\Big(\mu r^2|\log<r>|,|\log<r>|^2\Big).
\end{align}
Moreover,
\begin{align}\label{phidmuq}
\phi'_{\partial_\mu Q_\mu}=\phi'_{T_1}+\partial_r\partial_\mu \sigma,
\end{align}
with
$$|\partial_r\partial_\mu\sigma|\lesssim \min(\mu r|\log<r>|,\frac{|\log<r>|}{r}).$$
In addition, 
\begin{align}\label{dmuqmu}
\partial_\mu Q_\mu=(\phi_{T_1}+\partial_\mu\sigma-\frac{r^2}{2})Q_\mu.
\end{align}
\end{proposition}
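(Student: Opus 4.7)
The plan is to obtain \eqref{phiqdmu} by differentiating the expansion \eqref{approxphiqmu} in $\mu$, then bound $\partial_\mu\sigma$ by differentiating the explicit Green's-function representation \eqref{sigma2} for $\sigma$, and finally recover \eqref{dmuqmu} by invoking the algebraic identity $\mathcal{M}_\mu^y(\partial_\mu Q_\mu) = -|y|^2/2$ from Proposition \ref{proposition:algebraic_identities}.

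First, since $u \mapsto \phi_u$ is linear, we have $\phi_{\partial_\mu Q_\mu} = \partial_\mu \phi_{Q_\mu}$ whenever $\mu\mapsto Q_\mu$ is differentiable. The smoothness of the map $\mu \mapsto Q_\mu$ is a consequence of the smooth parametrization $\mu = 8/n_\infty^{M}(0)$ of the subcritical family $n_\infty^M$ together with the Gronwall argument postponed to Section 4 (see \eqref{gronwallineq}); granting this, we simply differentiate $\phi_{Q_\mu} = \phi_Q + \mu\phi_{T_1} + \sigma(\mu,r)$ in $\mu$ to obtain $\phi_{\partial_\mu Q_\mu} = \phi_{T_1} + \partial_\mu \sigma$, which is \eqref{phiqdmu}.

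Second, to estimate $\partial_\mu\sigma$, I differentiate
$$\sigma(\mu,r) = -f_0(r)\int_0^r g(\mu,\tau) f_1(\tau)\tau\, d\tau + f_1(r)\int_0^r g(\mu,\tau) f_0(\tau)\tau\, d\tau$$
under the integral sign in $\mu$. With $h(\mu,r) = \mu\phi_{T_1} + \sigma - \mu r^2/2 \in [-\mu r^2/2,0]$, the source $g(\mu,r)$ coming from the derivation of Proposition \ref{proposition:approxQmu} is of the form $Q(e^h - 1 - h)$ (plus explicit linear-in-$\mu$ pieces already matched by $\phi_{T_1}$). Computing $\partial_\mu g = Q(e^h-1)\bigl(\phi_{T_1} - r^2/2 + \partial_\mu\sigma\bigr)$ and using $|e^h-1| \lesssim \min(\mu r^2,1)$ together with the bounds $|\phi_{T_1}| \lesssim \min(r^4|\log\langle r\rangle|,|\log\langle r\rangle|^2)$ of Proposition \ref{proposition:comp}, a bootstrap assumption $|\partial_\mu\sigma|\lesssim |\log\langle r\rangle|^2$ yields $|\partial_\mu g| \lesssim \min(\mu r^4, r^2)Q$. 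Plugging this into the same Green's-function integrals evaluated in the proof of Proposition \ref{proposition:comp} (which require only the explicit asymptotics of $f_0, f_1$ and $Q$) closes the bootstrap at the stated bound $|\partial_\mu\sigma|\lesssim \min(\mu r^2|\log\langle r\rangle|,|\log\langle r\rangle|^2)$. The radial derivative $\partial_r\partial_\mu\sigma$ is obtained by differentiating the representation in $r$ as well, using $f_0'(r),f_1'(r)$ computed in Proposition \ref{proposition:comp}, which produces the claimed bound $\min(\mu r|\log\langle r\rangle|, |\log\langle r\rangle|/r)$.

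Finally, the algebraic identity $\mathcal{M}_\mu^y(\partial_\mu Q_\mu) = -|y|^2/2$ from Proposition \ref{proposition:algebraic_identities} reads $\partial_\mu Q_\mu/Q_\mu - \phi_{\partial_\mu Q_\mu} = -r^2/2$; multiplying by $Q_\mu$ and substituting \eqref{phiqdmu} gives \eqref{dmuqmu}. The main obstacle is the bootstrap in the second step: since $\partial_\mu g$ involves $\partial_\mu\sigma$ through $\partial_\mu h$, one must ensure that the nonlinear dependence is genuinely subcritical compared to the linear inversion (i.e.\ that the contribution of $\partial_\mu\sigma$ inside the integrals is controlled by a small constant times the bound we are trying to establish). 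This reduces to the same exponential-type smallness $|e^h - 1|\lesssim \mu r^2$ that was used in Proposition \ref{proposition:approxQmu}, so the same inversion machinery closes here with essentially no new ingredient beyond the $r$-derivative estimates of $f_0, f_1$.
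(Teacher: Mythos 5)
Your proposal is correct and follows essentially the same route as the paper: differentiating the Green's-function representation \eqref{sigma2} in $\mu$ yields exactly the paper's identity $\partial_\mu\sigma=-f_0\int_0^r(\phi_{T_1}+\partial_\mu\sigma-\tfrac{\tau^2}{2})(Q_\mu-Q)f_1\tau\,d\tau+f_1\int_0^r(\phi_{T_1}+\partial_\mu\sigma-\tfrac{\tau^2}{2})(Q_\mu-Q)f_0\tau\,d\tau$, and \eqref{dmuqmu} is obtained in both cases from ${\mathcal{M}_\mu^y}(\partial_\mu Q_\mu)=-|y|^2/2$, i.e.\ from differentiating \eqref{logeqmu} in $\mu$. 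The only cosmetic difference is that you close the self-referential estimate by a bootstrap/absorption argument (noting the $\partial_\mu\sigma$-contribution enters with the small factor $|e^h-1|\lesssim\min(\mu r^2,1)$), whereas the paper invokes Gronwall's inequality in $r$ and justifies the $\mu$-differentiation via finite differences; these are interchangeable here.
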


\begin{proof}[Proof of Proposition \ref{proposition:comp2}]

By taking the derivative in $\mu$ of \eqref{sigma2}, we get
\begin{align}\label{difest}
\partial_\mu\sigma&=-f_0(r)\int_0^r(\phi_{T_1}+\partial_\mu\sigma-\frac{r^2}{2})(Q_\mu-Q)f_1\tau d\tau\nonumber\\
&\quad+f_1(r)\int_0^r (\phi_{T_1}+\partial_\mu\sigma-\frac{r^2}{2})(Q_\mu-Q)f_0\tau d\tau.
\end{align}
Since, $|f_0|\leq 1$, $|f_1|\leq |\log<r>|$ and $|Q_\mu-Q|\lesssim \min(\mu r^2,1) Q$, it follows that
$$|\partial_\mu\sigma|\lesssim \min(\mu r^2,|\log<r>|)|\log<r>| +|\log<r>|\int_0^r|\partial_\mu\sigma| \min(\mu\tau^2,1)\tau Q d\tau.$$
Hence, by using Gronwall inequality, we deduce 
\begin{align}\label{gronwallineq}
|\partial_\mu\sigma|\lesssim \min(\mu r^2,|\log<r>|)|\log<r>|. 
\end{align}
Note that to prove the smoothness in $\mu$ rigourosly one should do the previous calculation on the finite difference $\frac{\sigma(\mu)-\sigma(\mu_0)}{\mu-\mu_0}$ and then pass to the limit $\mu\rightarrow\mu_0$ in \eqref{gronwallineq}. 
We will not detail this here. To prove the bound on $\partial_r\partial_\mu\sigma$, we differentiate \eqref{difest} with respect to $r$,
\begin{align*}
\partial_r\partial_\mu\sigma&=-f'_0(r)\int_0^r(\phi_{T_1}+\partial_\mu\sigma-\frac{r^2}{2})(Q_\mu-Q)f_1\tau d\tau\\
&+f'_1(r)\int_0^r(\phi_{T_1}+\partial_\mu\sigma-\frac{r^2}{2})(Q_\mu-Q)f_0\tau d\tau.
\end{align*}
If we use as before that $|f_0|\leq 1$,\quad $f'_0(r)=\frac{rQ}{2}$,\quad $|f_1|\lesssim |\log<r>|$, \quad $|f'_1|\lesssim r^3Q$,\quad $|Q_\mu-Q|\lesssim \min(\mu r^2,1) Q$ and $|\partial_\mu\sigma|\lesssim \min(\mu r^2,|\log<r>|)|\log<r>|$, we deduce  
$$|\partial_r\partial_\mu\sigma|\lesssim \min(\mu r|\log<r>|,\frac{|\log<r>|}{r}).$$
\end{proof}
By taking an extra derivative with respect to $\mu$, we can obtain bounds on $\partial_\mu^2 Q_\mu$ and $\phi_{\partial_\mu^2 Q_\mu}$.
\begin{proposition}\label{proposition:comp3}
The following bounds hold:
\begin{align}\label{phid2muq}
|\phi_{\partial^2_\mu Q_\mu}|\lesssim r^2|\log<r>|,
\end{align}
and
\begin{align}\label{d2qmu}
\partial^2_\mu Q_\mu=\Big[\phi_{\partial^2_\mu Q_\mu}+\Big(\phi_{\partial_\mu Q_\mu}-\frac{r^2}{2}\Big)^2\Big]Q_\mu.
\end{align}
\end{proposition}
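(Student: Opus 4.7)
The plan is to handle the two parts in sequence: first derive the algebraic formula \eqref{d2qmu} by differentiating \eqref{dmuqmu} once more in $\mu$, and then derive the pointwise bound \eqref{phid2muq} by differentiating the integral equation for $\sigma$ twice in $\mu$ and applying a Gronwall argument exactly as in the proof of Proposition \ref{proposition:comp2}.

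For the algebraic identity, I would start from \eqref{dmuqmu}, namely $\partial_\mu Q_\mu = (\phi_{\partial_\mu Q_\mu} - r^2/2)\, Q_\mu$. Since $\phi_{(\cdot)}$ is the linear Poisson solver, $\partial_\mu \phi_{Q_\mu} = \phi_{\partial_\mu Q_\mu}$ and $\partial_\mu^2 \phi_{Q_\mu} = \phi_{\partial_\mu^2 Q_\mu}$. Differentiating once more in $\mu$ and substituting \eqref{dmuqmu} back in yields
\begin{align*}
\partial_\mu^2 Q_\mu &= \phi_{\partial_\mu^2 Q_\mu}\, Q_\mu + (\phi_{\partial_\mu Q_\mu}-r^2/2)\,\partial_\mu Q_\mu \\
&= \Big[\phi_{\partial_\mu^2 Q_\mu} + (\phi_{\partial_\mu Q_\mu}-r^2/2)^2\Big]Q_\mu,
\end{align*}
which is exactly \eqref{d2qmu}.

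For the bound \eqref{phid2muq}, I would use the expansion \eqref{approxphiqmu}, $\phi_{Q_\mu}=\phi_Q+\mu\phi_{T_1}+\sigma$, together with the fact that $\phi_Q$ and $\phi_{T_1}$ are $\mu$-independent, to reduce the problem to showing $|\partial_\mu^2\sigma|\lesssim r^2|\log\langle r\rangle|$. For this I would differentiate the integral identity \eqref{difest} one more time in $\mu$. This produces an expression for $\partial_\mu^2\sigma$ as an integral whose integrand depends on $\partial_\mu^2 Q_\mu$, on $\partial_\mu \sigma$, on $\partial_\mu^2\sigma$ itself, and on the previously-bounded quantities $\phi_{T_1}$, $\sigma$, $\partial_\mu\sigma$, $Q_\mu-Q$. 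Using $|f_0|\leq 1$, $|f_1|\lesssim|\log\langle r\rangle|$, the Proposition \ref{proposition:comp} bounds on $\phi_{T_1}$, the Proposition \ref{proposition:comp2} bounds $|\partial_\mu\sigma|\lesssim\min(\mu r^2,|\log\langle r\rangle|)|\log\langle r\rangle|$, and the pointwise control $|Q_\mu-Q|\lesssim \min(\mu r^2,1)\,Q$ from Proposition \ref{proposition:comp1}, each non-self-referential source term can be bounded by a constant multiple of $r^2|\log\langle r\rangle|$. The self-referential terms involving $\partial_\mu^2\sigma$ under the integral will be absorbed by Gronwall's inequality, exactly as in \eqref{gronwallineq}.

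The main obstacle is bookkeeping: after the second differentiation there are several terms (including one in which $\partial_\mu^2 Q_\mu$ itself re-appears through $\partial_\mu(\phi_{T_1}+\partial_\mu\sigma-r^2/2)(Q_\mu-Q) + (\phi_{T_1}+\partial_\mu\sigma-r^2/2)\partial_\mu Q_\mu$), so one must expand $\partial_\mu^2 Q_\mu$ via the already-established algebraic identity \eqref{d2qmu} and verify that the terms involving $\phi_{\partial_\mu^2 Q_\mu}$ enter only multiplied by small factors like $Q_\mu-Q$ or through integrals which are controlled by Gronwall. As in Proposition \ref{proposition:comp2}, smoothness of $\sigma$ in $\mu$ should rigorously be established by first working with the finite difference $(\partial_\mu\sigma(\mu)-\partial_\mu\sigma(\mu_0))/(\mu-\mu_0)$ and passing to the limit, but this step I would omit.
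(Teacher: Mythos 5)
Your proposal is correct and follows essentially the same route as the paper: obtain \eqref{d2qmu} by differentiating \eqref{dmuqmu} (using linearity of the Poisson solver), and obtain \eqref{phid2muq} by noting $\phi_{\partial_\mu^2 Q_\mu}=\partial_\mu^2\sigma$, differentiating \eqref{difest} once more in $\mu$, bounding the source terms with the estimates of Propositions \ref{proposition:comp}, \ref{proposition:comp1}, \ref{proposition:comp2}, and absorbing the self-referential $\partial_\mu^2\sigma\,(Q_\mu-Q)$ term by Gronwall. One small bookkeeping remark: differentiating \eqref{difest} produces only $\partial_\mu Q_\mu$ (to be replaced via \eqref{dmuqmu}), not $\partial_\mu^2 Q_\mu$, so no appeal to \eqref{d2qmu} is needed inside the Gronwall step.
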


\begin{proof}

We differentiate \eqref{phiqdmu} with respect to $\mu$,
\begin{align}\label{eqphid2qmu}
\phi_{\partial^2_\mu Q_\mu}=\partial_\mu^2\sigma.
\end{align}
And by differentiating \eqref{difest} in $\mu$ and using $\partial_\mu Q_\mu=(\phi_{\partial_\mu Q_\mu}-\frac{r^2}{2})Q_\mu$ we get,
\begin{align}
\partial^2_\mu \sigma &=-f_0(r)\int_0^r\Big[\partial^2_\mu \sigma(Q_\mu-Q)+Q_\mu\Big(\partial_\mu \sigma-\frac{r^2}{2}\Big)^2\Big]f_1\tau d\tau\nonumber\\
&+f_1(r)\int_0^r \Big[\partial^2_\mu \sigma(Q_\mu-Q)+Q_\mu\Big(\partial_\mu \sigma-\frac{r^2}{2}\Big)^2\Big]f_0\tau d\tau.
\end{align}
Finally, using that $|f_0|\leq 1$,\quad $|f_1|\lesssim |\log<r>|$,\quad $|Q_\mu-Q|\lesssim  \min(1,\mu r^2)Q$, \\
\quad $|Q_\mu|\lesssim Q$, and $|\partial_\mu \sigma|\lesssim \min\Big(\mu r^2|\log<r>|, |\log(r)|^2\Big)$, we deduce that
\begin{align}
|\partial_\mu^2\sigma|\lesssim r^2|\log<r>| +|\log<r>|\int_0^r|\partial^2_\mu\sigma| \min(\mu\tau^2,1)\tau Q d\tau.
\end{align}

 Hence, 
\eqref{phid2muq} follows from Gronwall inequality.
To justify the extra derivative that we took on $\partial_\mu\sigma$ with respect to $\mu$ one can do the same calculation above on the finite difference $\frac{\partial_\mu\sigma(\mu)-\partial_\mu\sigma(\mu_0)}{\mu-\mu_0}$ and then pass to the limit $\mu\rightarrow\mu_0$ .
 To conclude, \eqref{d2qmu} follows from differentiating \eqref{dmuqmu} with respect to $\mu$.
\end{proof}

\begin{proposition}\label{proposition:comp4}
We have the following bounds:
\begin{align}\label{phid3muq}
|\phi_{\partial^3_\mu Q_\mu}|\lesssim r^4|\log<r>|,
\end{align}
\begin{align}\label{d3qmu}
\partial^3_\mu Q_\mu=\Big[\Big(\phi_{\partial^2_\mu Q_\mu}+\Big(\phi_{\partial_\mu Q_\mu}-\frac{r^2}{2}\Big)^2+2\phi_{\partial^2_\mu Q_\mu}\Big)\Big(\phi_{\partial_\mu Q_\mu}-\frac{r^2}{2}\Big)+\phi_{\partial^3_\mu Q_\mu}\Big]Q_\mu.
\end{align}
\end{proposition}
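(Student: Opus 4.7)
The plan is to proceed by direct differentiation in $\mu$ of the identities established in Proposition \ref{proposition:comp3}, exactly mirroring the way Proposition \ref{proposition:comp3} was derived from Proposition \ref{proposition:comp2}.

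For the algebraic identity \eqref{d3qmu}, I would start from \eqref{d2qmu}, namely $\partial^2_\mu Q_\mu=\bigl[\phi_{\partial^2_\mu Q_\mu}+(\phi_{\partial_\mu Q_\mu}-\tfrac{r^2}{2})^2\bigr]Q_\mu$, and differentiate once more with respect to $\mu$. This produces three contributions: $\phi_{\partial^3_\mu Q_\mu}Q_\mu$ from the first term, $2(\phi_{\partial_\mu Q_\mu}-\tfrac{r^2}{2})\phi_{\partial^2_\mu Q_\mu}Q_\mu$ from the square, and an overall factor coming from $\partial_\mu Q_\mu=(\phi_{\partial_\mu Q_\mu}-\tfrac{r^2}{2})Q_\mu$ (which is \eqref{dmuqmu}). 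Collecting these and factoring out $(\phi_{\partial_\mu Q_\mu}-\tfrac{r^2}{2})$ where it appears gives precisely \eqref{d3qmu}.

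For the bound \eqref{phid3muq}, I would use that $\phi_{\partial^3_\mu Q_\mu}=\partial_\mu^3\sigma$ (which follows from taking one more $\mu$-derivative of \eqref{eqphid2qmu}, remembering that $\phi_Q$ and $\phi_{T_1}$ do not depend on $\mu$). Differentiating once more the integral representation of $\partial_\mu^2\sigma$ used in the proof of Proposition \ref{proposition:comp3}, one obtains an identity of the schematic form
\begin{align*}
\partial_\mu^3\sigma &= -f_0(r)\int_0^r\bigl[\,\partial_\mu^3\sigma\,(Q_\mu-Q)+\mathcal{R}(\mu,\tau)\bigr]f_1\,\tau\, d\tau\\
&\quad +f_1(r)\int_0^r\bigl[\,\partial_\mu^3\sigma\,(Q_\mu-Q)+\mathcal{R}(\mu,\tau)\bigr]f_0\,\tau\, d\tau,
\end{align*}
where the remainder $\mathcal{R}$ collects the terms generated by differentiating $(\phi_{\partial_\mu Q_\mu}-\tfrac{r^2}{2})^2 Q_\mu$ and $\phi_{\partial^2_\mu Q_\mu}(Q_\mu-Q)$ in $\mu$, i.e.\ polynomial combinations of $\partial_\mu\sigma$, $\partial_\mu^2\sigma$, $\phi_{T_1}$, $r^2$, multiplied by $Q_\mu$ or $\partial_\mu Q_\mu$. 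Using the bounds already established, namely $|\partial_\mu\sigma|\lesssim\min(\mu r^2|\log\langle r\rangle|,|\log\langle r\rangle|^2)$, $|\partial_\mu^2\sigma|\lesssim r^2|\log\langle r\rangle|$, $|Q_\mu|\lesssim Q$, $|Q_\mu-Q|\lesssim\min(1,\mu r^2)Q$, together with $|f_0|\leq 1$ and $|f_1|\lesssim |\log\langle r\rangle|$, one checks that $|\mathcal{R}(\mu,\tau)|\lesssim \tau^4|\log\langle\tau\rangle|\,Q(\tau)$. This gives a pointwise inequality
\begin{equation*}
|\partial_\mu^3\sigma(r)|\lesssim r^4|\log\langle r\rangle|+|\log\langle r\rangle|\int_0^r|\partial_\mu^3\sigma(\tau)|\min(\mu\tau^2,1)\tau\,Q(\tau)\,d\tau,
\end{equation*}
and Gronwall's inequality (the weight $\min(\mu\tau^2,1)\tau Q(\tau)$ being integrable uniformly in $\mu$) yields $|\partial_\mu^3\sigma|\lesssim r^4|\log\langle r\rangle|$, which is \eqref{phid3muq}.

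The only subtle point is the same regularity issue already noted in the proof of Proposition \ref{proposition:comp3}: the formal $\mu$-differentiation should strictly speaking be performed on a finite difference $\tfrac{\partial_\mu^2\sigma(\mu)-\partial_\mu^2\sigma(\mu_0)}{\mu-\mu_0}$ and then one passes to the limit $\mu\to\mu_0$ after obtaining the uniform Gronwall bound; this step is routine and identical to the one carried out in the proofs of Propositions \ref{proposition:comp2} and \ref{proposition:comp3}, so I would simply refer to it rather than repeat it. The main bookkeeping difficulty is keeping track of the many terms produced by differentiating the cube-like expression, but no new analytic idea is needed beyond the Gronwall argument already in place.
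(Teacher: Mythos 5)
Your proposal is correct and follows essentially the same route as the paper, whose own proof of this proposition consists precisely of differentiating \eqref{eqphid2qmu} to get $\phi_{\partial^3_\mu Q_\mu}=\partial_\mu^3\sigma$, repeating the Gronwall argument of Proposition \ref{proposition:comp3} (with the details left to the reader), the finite-difference justification of the extra $\mu$-derivative, and differentiating \eqref{d2qmu} to obtain \eqref{d3qmu}. One minor bookkeeping remark: the cubic term $\bigl(\phi_{\partial_\mu Q_\mu}-\tfrac{r^2}{2}\bigr)^3Q_\mu$ arising in your remainder $\mathcal{R}$ is only $O(\tau^6 Q)$, not $O(\tau^4|\log\langle\tau\rangle|\,Q)$, but since $\int_0^r\tau^7 Q\,d\tau\lesssim r^4$ this still produces the source term $r^4|\log\langle r\rangle|$ in your displayed inequality, so the Gronwall step and the conclusion are unaffected.
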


\begin{proof}
We differentiate \eqref{eqphid2qmu} with respect to $\mu$.
Hence,
$$\phi_{\partial^3_\mu Q_\mu}=\partial_\mu^3\sigma.$$
By processing as in the proof of Proposition \ref{proposition:comp3} one can easily deduce \eqref{phid3muq}, and the details are left to the reader. To conclude, by differentiating 
$$\partial^2_\mu Q_\mu=\Big[\phi_{\partial^2_\mu Q_\mu}+\Big(\phi_{\partial_\mu Q_\mu}-\frac{r^2}{2}\Big)^2\Big]Q_\mu$$ 
with respect to $\mu$, \eqref{d3qmu} follows. 
\end{proof}

In the next following Lemmas we estimate the mass and the second moment of $Q_\mu$ which is fundamental for either the application of the implicit function theorem in Lemma \ref{lemma:mod} or the derivation of the law of $\mu$ in Lemma \ref{lemma:mu_law}.
\begin{lemma}\label{lemma:appM_2mom}
We have the following expansions for the mass of $Q_\mu$:
\begin{align}\label{approxM_Q_mu}
M(\mu)=\int_{\RR^2}Q_\mu dy=8\pi+2\mu\log(\mu)+O(\mu),
\end{align}
\begin{align}\label{Massdmuqmu}
M'(\mu)=\int_{\RR^2}\partial_\mu Q_\mu dy=2\log(\mu)+O(1),
\end{align}
\begin{align}\label{Massd2muqmu}
M''(\mu)=\int_{\RR^2}\partial^2_\mu Q_\mu dy=O(\frac{1}{\mu}).
\end{align}
\end{lemma}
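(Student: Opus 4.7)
The three expansions all follow from direct radial integration of the explicit formulas \eqref{approxQmu}, \eqref{dmuqmu}, \eqref{d2qmu} for $Q_\mu$, $\partial_\mu Q_\mu$ and $\partial_\mu^2 Q_\mu$, combined with the pointwise bounds on $\phi_{T_1}$, $\sigma$, $\partial_\mu\sigma$ and $\phi_{\partial_\mu^2 Q_\mu}$ collected in Propositions \ref{proposition:comp}--\ref{proposition:comp3}. The common thread is that the only source of divergence as $\mu\to 0$ is the interaction between the slow $1/r^4$ decay of $Q$ and the Gaussian cutoff $e^{-\mu r^2/2}$; everything else integrates to a remainder of the stated order.

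For \eqref{approxM_Q_mu} I would write $Q_\mu = Qe^{h}$ with $h=\mu\phi_{T_1}+\sigma-\tfrac{\mu r^2}{2}$ and factor
\begin{equation*}
e^{h}-1 = \bigl(e^{-\mu r^2/2}-1\bigr)+e^{-\mu r^2/2}\bigl(e^{\mu\phi_{T_1}+\sigma}-1\bigr).
\end{equation*}
Using $|\phi_{T_1}|\lesssim (\log\langle r\rangle)^2$ and $|\sigma|\lesssim\mu(\log\langle r\rangle)^2$, the second factor is pointwise bounded by $C\mu(\log\langle r\rangle)^2$ on the region $\mu(\log\langle r\rangle)^2\leq 1$ (with the complementary tail dominated by $e^{-\mu r^2/2}$), producing an $O(\mu)$ contribution once integrated against $Q$. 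The remaining piece $\int_{\RR^2}Q(e^{-\mu r^2/2}-1)\,dy$ reduces, after $u=r^2$ and one integration by parts, to a constant multiple of $\mu\int_0^\infty e^{-\mu u/2}(1+u)^{-1}\,du$; splitting this at $u=1$ and recognising the small-$u$ contribution as $\log(1/\mu)+O(1)$ (the remaining integral is a bounded exponential integral) yields the announced $\mu\log\mu$ term.

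For \eqref{Massdmuqmu} I differentiate under the integral sign using \eqref{dmuqmu} to get
\begin{equation*}
M'(\mu)=\int_{\RR^2}\Bigl(\phi_{T_1}+\partial_\mu\sigma-\tfrac{r^2}{2}\Bigr)Q_\mu\,dy.
\end{equation*}
The first two summands are $O(1)$ since $|\phi_{T_1}|+|\partial_\mu\sigma|\lesssim (\log\langle r\rangle)^2$ (Propositions \ref{proposition:comp}, \ref{proposition:comp2}) and $Q_\mu\le Q$. The $-\tfrac12\int r^2 Q_\mu\,dy$ summand is the source of the logarithm: replacing $Q_\mu$ by $Qe^{-\mu r^2/2}$ up to the same $O(1)$ error as above and applying the identical integration-by-parts trick to $\int_0^\infty u(1+u)^{-2}e^{-\mu u/2}\,du$ produces the $\log\mu+O(1)$ behaviour.

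For \eqref{Massd2muqmu} I use \eqref{d2qmu}. The piece involving $\phi_{\partial_\mu^2 Q_\mu}$ is $O(\log\mu)$ by Proposition \ref{proposition:comp3}, and every cross or square term in $\bigl(\phi_{T_1}+\partial_\mu\sigma-\tfrac{r^2}{2}\bigr)^2$ other than $r^4/4$ is polylogarithmic and contributes at most $O((\log\mu)^2)$. Only $\tfrac14\int r^4 Q_\mu\,dy$ diverges: using the envelope $Q_\mu\lesssim Qe^{-\mu r^2/4}$ for $r$ large (valid because $\mu\phi_{T_1}+\sigma=o(\mu r^2)$ by Proposition \ref{proposition:approxQmu}), this scales after $u=r^2$ like $\int_0^\infty u^2(1+u)^{-2}e^{-c\mu u}\,du\asymp \int_0^\infty e^{-c\mu u}\,du\asymp 1/\mu$, giving the bound $O(1/\mu)$. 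The main obstacle throughout is that the positive correction $\mu\phi_{T_1}\sim 6\mu(\log r)^2$ grows at infinity and could, a priori, resonate with the slow tail of $Q$ to spoil the leading coefficient in \eqref{approxM_Q_mu}; the escape is that $\int_1^\infty Q(\log r)^2 r\,dr$ converges to an absolute constant, so this correction is genuinely $O(\mu)$. Differentiation under the integral sign in $M'$ and $M''$ is then routine from the $L^\infty$-type controls on $\partial_\mu Q_\mu$ and $\partial_\mu^2 Q_\mu$ provided by Propositions \ref{proposition:comp2}--\ref{proposition:comp3}.
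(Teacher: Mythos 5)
Your proposal is correct and is essentially the paper's own argument: both rest on the representations $Q_\mu=Qe^{\mu\phi_{T_1}+\sigma-\mu\frac{r^2}{2}}$, $\partial_\mu Q_\mu=(\phi_{\partial_\mu Q_\mu}-\tfrac{r^2}{2})Q_\mu$ and $\partial_\mu^2Q_\mu=[\phi_{\partial_\mu^2Q_\mu}+(\phi_{\partial_\mu Q_\mu}-\tfrac{r^2}{2})^2]Q_\mu$ together with the polylogarithmic bounds on $\phi_{T_1},\sigma,\partial_\mu\sigma,\phi_{\partial_\mu^2Q_\mu}$, and both isolate the divergence as the interaction of the $r^{-4}$ tail of $Q$ with the Gaussian cutoff, the only difference being that the paper extracts the logarithm (and the $1/\mu$ in $M''$) by cutting the radial integral at $r=\mu^{-1/2}$ and Taylor-expanding, respectively rescaling $\tau=\sqrt{\mu}\,r$, where you factor out $e^{-\mu r^2/2}$ exactly and use exponential-integral asymptotics. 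The one point you leave implicit is the numerical coefficient of $\mu\log\mu$ required by the statement; carrying out your model integral gives $8\pi\cdot(-\tfrac{\mu}{2})|\log\mu|$, which agrees with the paper's computation up to the angular factor $2\pi$ that the paper silently drops when passing to radial integrals, so only the order of the expansion (which is all that is used later at this level) is affected.
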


\begin{proof}
To prove \eqref{approxM_Q_mu} we decompose $Q_\mu=Q+Q(e^{\mu\phi_{T_1}+\sigma-\mu\frac{r^2}{2}}-1)$ and plug it in $M=\int_{\RR^2}Q_\mu dy$.
Hence,
\begin{align}
M(\mu)&=\int_{\RR^2}Qdy+\int_{\RR^2}Q(e^{\phi_{Q_\mu}-\mu\frac{|y|^2}{2}}-1)dy\nonumber\\
&=8\pi+\int_0^{\frac{1}{\sqrt{\mu}}}Q(\mu\phi_{T_1}+\sigma-\mu\frac{r^2}{2}+O(\mu^2r^4))rdr+\int_{\frac{1}{\sqrt{\mu}}}^{+\infty}(Q_\mu-Q)rdr,
\end{align}
Since $|Q_\mu-Q|\lesssim Q$ for $r\geq \frac{1}{\sqrt{\mu}}$ we get that
$$\int_{\frac{1}{\sqrt{\mu}}}^{+\infty}|Q_\mu-Q|rdr\lesssim\int_{\frac{1}{\sqrt{\mu}}}^{+\infty}Q rdr\lesssim \mu.$$
Hence, by using $|\sigma|\lesssim \mu^2r^2|\log<r>|$ for $r\leq \frac{1}{\sqrt{\mu}}$ and $|\phi_{T_1}|\lesssim |\log<r>|^2$ we deduce that
\begin{align}\label{mestqmu}
M(\mu)&=8\pi-\frac{\mu}{2}\int_0^{\frac{1}{\sqrt{\mu}}}Qr^3dr+O(\mu)=8\pi+2\mu\log(\mu)+O(\mu).
\end{align}
To prove \eqref{Massdmuqmu}, we use that $\partial_\mu Q_\mu=(\phi_{\partial_\mu Q_\mu}-\frac{r^2}{2})Q_\mu$.
It follows by using Propositions \ref{proposition:approxQmu} and \ref{proposition:comp2} that
\begin{align*}
\int_{\RR^2}\partial_\mu Q_\mu dy&=\int_0^{\frac{1}{\sqrt{\mu}}}(\phi_{\partial_\mu Q_\mu}-\frac{r^2}{2})Q(1+\mu\phi_{T_1}+\sigma-\mu\frac{r^2}{2}+O(\mu^2r^4))rdr\\
&+\int_{\frac{1}{\sqrt{\mu}}}^\infty(\phi_{\partial_\mu Q_\mu}-\frac{r^2}{2})Q_\mu rdr.
\end{align*}
Since, $|\sigma|\lesssim \min(\mu^2r^2|\log<r>|,\mu|\log<r>|^2)$, $|\phi_{T_1}|\lesssim \min\Big(r^4|\log<r>|,|\log<r>|^2\Big)$ and $|\phi_{\partial_\mu Q_\mu}|\lesssim r^2$, it follows that
\begin{align}\label{r3qmu}
\int_{\frac{1}{\sqrt{\mu}}}^\infty\Big|\phi_{\partial_\mu Q_\mu}-\frac{r^2}{2}\Big|Q_\mu rdr\lesssim \int_{\frac{1}{\sqrt{\mu}}}^\infty r^3 Q_\mu dr\lesssim \int_{\frac{1}{\sqrt{\mu}}}^\infty r^3Qe^{\mu(|\log<r>|^2-\frac{r^2}{2})}dr\lesssim 1,
\end{align}
and
$$\int_0^{\frac{1}{\sqrt{\mu}}}\Big|\phi_{\partial_\mu Q_\mu}-\frac{r^2}{2}\Big|Q\Big|\mu\phi_{T_1}+\sigma-\mu\frac{r^2}{2}+O(\mu^2r^4)\Big|rdr+\int_0^{\frac{1}{\sqrt{\mu}}}\Big|\phi_{\partial_\mu Q_\mu}\Big|Qrdr\lesssim 1.$$
Hence,
\begin{align}
M'(\mu)=\int_{\RR^2}\partial_\mu Q_\mu dy&=-\int_0^{\frac{1}{\sqrt{\mu}}}Q\frac{r^3}{2}dr+O(1)=2\log(\mu)+O(1).
\end{align}
To prove \eqref{Massd2muqmu} we use Proposition \ref{proposition:comp3} to deduce that,
\begin{align}
M''(\mu)=\int_{\RR^2}\partial^2_\mu Q_\mu dy=\int_{\RR^2}\Big[\phi_{\partial_\mu^2 Q_\mu}+\Big(\phi_{\partial_\mu Q_\mu}-\frac{r^2}{2}\Big)^2\Big]Q_\mu dy.
\end{align}
From Proposition \ref{proposition:comp4} we have for all $r\geq0$:
$$r^4-1\lesssim \phi_{\partial_\mu^2 Q_\mu}+\Big(\phi_{\partial_\mu Q_\mu}-\frac{r^2}{2}\Big)^2\lesssim r^4+1.$$
Hence,
\begin{align}
\int_0^\infty r^5 Q_\mu dr-1\lesssim M''(\mu)\lesssim\int_0^\infty r^5Q_\mu(r)dr+1.
\end{align}
Since, $Q_\mu=Qe^{\mu\phi_{T_1}+\sigma-\mu\frac{r^2}{2}}$ we obtain that
\begin{align}\label{qmuy4}
\int_0^\infty r^5 Q_\mu dr=\int_0^\infty r^5 Qe^{\mu\phi_{T_1}+\sigma-\mu\frac{r^2}{2}}dr=\frac{1}{\mu}\int_0^\infty \tau^5 \frac{1}{(\mu+\tau^2)^2}e^{\mu\phi_{T_1}(\frac{\tau}{\sqrt{\mu}})+\sigma(\frac{\tau}{\sqrt{\mu}})-\frac{\tau^2}{2}}d\tau,
\end{align} 
and since $\int_0^\infty \tau^5 \frac{1}{(\mu+\tau^2)^2}e^{\mu\phi_{T_1}(\frac{\tau}{\sqrt{\mu}})+\sigma(\frac{\tau}{\sqrt{\mu}})-\frac{\tau^2}{2}}d\tau$ is uniformly bounded  with respect to $\mu$ we deduce that \eqref{Massd2muqmu} holds,
which concludes the proof of Lemma \ref{lemma:appM_2mom}.
\end{proof}

\begin{lemma}\label{lemma:approx2momqmu}
We have the following expression on the second moment of $Q_\mu$ and the following bounds on the derivatives of the second moment of $Q_\mu$ with respect to $\mu$:
\begin{align}\label{2M_Q_mu}
\int_{\RR^2}Q_\mu(y)|y|^2 dy=\frac{2M(\mu)}{\mu}\Big(1-\frac{M(\mu)}{8\pi}\Big).
\end{align}
There exists $C>0$ such that
\begin{align}\label{2momdmu}
\frac{-C}{\mu}\leq \int_{\RR^2}\partial_\mu Q_\mu(y)|y|^2 dy\leq \frac{-1}{C\mu},
\end{align}
\begin{align}\label{2momd2mu}
\frac{1}{C\mu^2}\leq \int_{\RR^2}\partial^2_\mu Q_\mu(y)|y|^2 dy\leq \frac{C}{\mu^2},
\end{align}
\begin{align}\label{2momd3mu}
\frac{-C}{\mu^3}\leq \int_{\RR^2}\partial^3_\mu Q_\mu(y)|y|^2 dy\leq \frac{-1}{C\mu^3}.
\end{align}
\end{lemma}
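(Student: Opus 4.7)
The plan is to treat the identity \eqref{2M_Q_mu} separately by a virial-type computation, and then derive the three estimates \eqref{2momdmu}--\eqref{2momd3mu} by inserting the explicit algebraic formulas for $\partial^k_\mu Q_\mu$ from Propositions \ref{proposition:comp2}, \ref{proposition:comp3}, \ref{proposition:comp4} and integrating against $|y|^2$. A tempting alternative is to differentiate the closed-form identity \eqref{2M_Q_mu} in $\mu$ using $M(\mu)=8\pi+2\mu\log\mu+O(\mu)$ and $M'(\mu)=2\log\mu+O(1)$ from Lemma \ref{lemma:appM_2mom}; however, the leading $\log\mu/\mu$ terms cancel exactly, and one cannot extract matching lower bounds without a sharper expansion of $M(\mu)$ that is not available. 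The pointwise route avoids this cancellation because the leading behavior of each $\partial^k_\mu Q_\mu$ is captured by a single monomial factor in $r$.

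For \eqref{2M_Q_mu}, multiply \eqref{eq_Q_mu} by $|y|^2$ and integrate over $\RR^2$, using exponential decay of $Q_\mu$ to justify the integrations by parts. The Laplacian contributes $\int|y|^2\Delta Q_\mu\,dy=4M$; the scaling term contributes $-\mu\int|y|^2\nabla\cdot(yQ_\mu)\,dy=2\mu\int|y|^2 Q_\mu\,dy$; and the drift term rewrites as $2\int Q_\mu\,y\cdot\nabla\phi_{Q_\mu}\,dy$. Plugging the convolution formula for $\nabla\phi_{Q_\mu}$ into the last integral and symmetrizing $y\leftrightarrow y'$ exactly as in the virial computation recalled in the introduction collapses it to $-M^2/(4\pi)$. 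Solving the resulting linear relation for $\int|y|^2 Q_\mu\,dy$ yields \eqref{2M_Q_mu}.

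For the derivative bounds, substitute the formulas from Propositions \ref{proposition:comp2}, \ref{proposition:comp3}, \ref{proposition:comp4} into $\int|y|^2\partial^k_\mu Q_\mu\,dy$, expand the polynomial in $(\phi_{\partial_\mu Q_\mu}-r^2/2)$, and isolate the pure $(-r^2/2)^k Q_\mu$ term:
\begin{align*}
\int|y|^2\,\partial^k_\mu Q_\mu\,dy=\Big(-\tfrac{1}{2}\Big)^k\!\!\int r^{2k+2}Q_\mu\,dy+\mathcal{R}_k,
\end{align*}
where $\mathcal{R}_k$ gathers all remaining contributions, each carrying at least one factor of $\phi_{T_1}$, $\partial_\mu\sigma$, $\phi_{\partial^2_\mu Q_\mu}$, or $\phi_{\partial^3_\mu Q_\mu}$. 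The leading integral is handled by the change of variable $\tau=\sqrt\mu\,y$ together with $Q_\mu(y)=\mu n_\infty^M(\sqrt\mu y)$, exactly as in the proof of \eqref{Massd2muqmu}:
\begin{align*}
\int r^{2k+2}Q_\mu\,dy=\frac{c}{\mu^k}\int_0^\infty\frac{\tau^{2k+1}}{(\mu+\tau^2)^2}\,e^{\mu\phi_{T_1}(\tau/\sqrt\mu)+\sigma(\tau/\sqrt\mu)-\tau^2/2}\,d\tau,
\end{align*}
and the $\tau$-integral converges to a finite positive constant as $\mu\to 0$ by dominated convergence, hence it is uniformly bounded above and below for small $\mu$. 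Consequently the leading term has the sharp order $(-1)^k c_k/\mu^k$ with $c_k>0$, matching the signs and magnitudes required by \eqref{2momdmu}--\eqref{2momd3mu}.

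The main obstacle is controlling $\mathcal{R}_k=o(\mu^{-k})$ so that the leading term is not absorbed. Using $|\phi_{T_1}|,|\partial_\mu\sigma|\lesssim|\log\langle r\rangle|^2$ (Propositions \ref{proposition:comp}, \ref{proposition:comp2}), together with $|\phi_{\partial^2_\mu Q_\mu}|\lesssim r^2|\log\langle r\rangle|$ and $|\phi_{\partial^3_\mu Q_\mu}|\lesssim r^4|\log\langle r\rangle|$ (Propositions \ref{proposition:comp3}, \ref{proposition:comp4}), every term of $\mathcal{R}_k$ is dominated by an integral of the generic form $\int r^{2\ell}|\log\langle r\rangle|^p Q_\mu\,dy$ with $1\leq\ell\leq k$; the strict inequality $\ell\leq k$ holds because each occurrence of one of the above potentials trades two fewer powers of $r$ against a $|\log\langle r\rangle|$-factor compared with the $-r^2/2$ monomial it replaces. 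Running the same scaling $\tau=\sqrt\mu\,y$ on this generic integral gives the bound $O(\mu^{1-\ell}|\log\mu|^{p+1})$, so $\mathcal{R}_k=O(\mu^{1-k}|\log\mu|^{p+1})=o(\mu^{-k})$ since $\mu|\log\mu|^{p+1}\to 0$ as $\mu\to 0$. Combining the leading asymptotics with this remainder estimate establishes \eqref{2momdmu}, \eqref{2momd2mu}, and \eqref{2momd3mu}.
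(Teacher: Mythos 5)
Your proof follows essentially the same route as the paper: the virial computation (multiplying \eqref{eq_Q_mu} by $|y|^2$, integrating by parts and symmetrizing the convolution term) for \eqref{2M_Q_mu}, and then the explicit formulas of Propositions \ref{proposition:comp2}--\ref{proposition:comp4} together with the bounds on $\phi_{T_1},\sigma,\partial_\mu\sigma$ and the $\tau=\sqrt{\mu}\,r$ rescaling to extract the dominant $\bigl(-r^2/2\bigr)^kQ_\mu$ contribution and absorb the logarithmically weighted remainders, which is exactly the paper's treatment of \eqref{2momdmu}--\eqref{2momd3mu} (and of \eqref{Massd2muqmu}). Only a bookkeeping slip: in polar coordinates the rescaled numerator should be $\tau^{2k+3}$ rather than $\tau^{2k+1}$ for the prefactor $\mu^{-k}$ to come out (for $k=1$ your written exponent would produce an extra $|\log\mu|$), and with this correction your uniform two-sided bound on the $\tau$-integral yields the stated rates.
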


\begin{proof}
\begin{itemize}
\item Proof of \eqref{2M_Q_mu}: \newline
We start by calculating the second moment of $Q_\mu$.We multiply \eqref{eq_Q_mu} by $|y|^2$ and integrate
\begin{align}
\int_{\RR^2}\Delta Q_\mu |y|^2dy-\int_{\RR^2}\nabla\cdot(Q_\mu\nabla\phi_{Q_\mu})|y|^2dy=-\mu\int_{\RR^2}\nabla\cdot(yQ_\mu)|y|^2dy,
\end{align}
then by integration by parts and using that $\nabla\phi_{Q_\mu}=\frac{1}{|\cdot|}\star Q_\mu$ we get
\begin{align}
4M+\frac{1}{2\pi}\int_{\RR^2\times\RR^2}\frac{2y\cdot(y-x)}{|x-y|^2}Q_\mu(x)Q_\mu(y)dxdy&=2\mu\int_{\RR^2}Q_\mu(y)|y|^2 dy\\
4M+\frac{1}{2\pi}\int_{\RR^2\times\RR^2}\frac{(x-y)\cdot(y-x)}{|x-y|^2}Q_\mu(x)Q_\mu(y)dxdy&=2\mu\int_{\RR^2}Q_\mu(y)|y|^2 dy\\
4M-\frac{M^2}{2\pi}&=2\mu\int_{\RR^2}Q_\mu(y)|y|^2 dy,
\end{align}
which implies \eqref{2M_Q_mu}.
We have
\item Proof of \eqref{2momdmu}. \newline
\begin{align}
\int_{\RR^2}\partial_\mu Q_\mu(y)|y|^2 dy=\int_{\RR^2}\Big(\phi_{\partial_\mu Q_\mu}-\frac{r^2}{2}\Big) Q_\mu(y)|y|^2 dy.
\end{align}
Since, $Q_\mu=Qe^{\mu\phi_{T_1}+\sigma-\mu\frac{r^2}{2}}$, and $\phi_{\partial_\mu Q_\mu}=\phi_{T_1}+\partial_\mu\sigma$, it follows
\begin{align}
\int_{\RR^2}\partial_\mu Q_\mu(y)|y|^2 dy=\int_0^\infty\Big(\phi_{T_1}+\partial_\mu\sigma-\frac{r^2}{2}\Big) Qe^{\mu\phi_{T_1}+\sigma-\mu\frac{r^2}{2}}r^3dr.
\end{align}
By using that $|\phi_{T_1}|\lesssim  \min\Big( r^4|\log<r>|,|\log<r>|^2\Big)$ and\\ $|\partial_\mu\sigma|\lesssim \min\Big(\mu r^2|\log<r>|,|\log<r>|^2\Big)$, we deduce that for all $r\geq0$ 
$$-r^2\lesssim\phi_{T_1}+\partial_\mu\sigma-\frac{r^2}{2}\lesssim-r^2.$$
Hence,
\begin{align}
&-\int_0^\infty r^5 Qe^{\mu\phi_{T_1}+\sigma-\mu\frac{r^2}{2}}dr\lesssim\int_{\RR^2}\partial_\mu Q_\mu(y)|y|^2 dy\lesssim-\int_0^\infty r^5 Qe^{\mu\phi_{T_1}+\sigma-\mu\frac{r^2}{2}}dr.
\end{align}
Notice that 
$$\int_0^\infty r^5 Qe^{\mu\phi_{T_1}+\sigma-\mu\frac{r^2}{2}}dr=\frac{1}{\mu}\int_0^\infty \tau^5 \frac{1}{(\mu+\tau^2)^2}e^{\mu\phi_{T_1}(\frac{\tau}{\sqrt{\mu}})+\sigma(\frac{\tau}{\sqrt{\mu}})-\frac{\tau^2}{2}}d\tau,$$ 
and since $\int_0^\infty \tau^5 \frac{1}{(\mu+\tau^2)^2}e^{\mu\phi_{T_1}(\frac{\tau}{\sqrt{\mu}})+\sigma(\frac{\tau}{\sqrt{\mu}})-\frac{\tau^2}{2}}d\tau$ is uniformly bounded  with respect to $\mu$ we deduce \eqref{2momdmu}.
\item Proof of \eqref{2momd2mu}.\newline 
Notice first that
$$\partial_\mu^2Q_\mu=\Big[\phi_{\partial^2_\mu Q_\mu}+\Big(\phi_{\partial_\mu Q_\mu}-\frac{r^2}{2}\Big)^2\Big]Q_\mu.$$
Hence, by using $|\phi_{T_1}|\lesssim  \min\Big( r^4|\log<r>|,|\log<r>|^2\Big)$, $$|\partial_\mu\sigma|\lesssim \min\Big(\mu r^2|\log<r>|,|\log<r>|^2\Big) \mbox{ and } |\phi_{\partial^2_\mu Q_\mu}|\lesssim r^2|\log<r>|,$$ we deduce 
for all $r\geq0$
$$r^4-1\lesssim\phi_{\partial^2_\mu Q_\mu}+\Big(\phi_{\partial_\mu Q_\mu}-\frac{r^2}{2}\Big)^2\lesssim r^4+1.$$
\begin{align*}
&\int_0^\infty (r^7-r^3) Qe^{\mu\phi_{T_1}+\sigma-\mu\frac{r^2}{2}}dr\lesssim\int_{\RR^2}\partial_\mu Q_\mu(y)|y|^2 dy\\
&\lesssim\int_0^\infty (r^7+r^3) Qe^{\mu\phi_{T_1}+\sigma-\mu\frac{r^2}{2}}dr.
\end{align*}
From \eqref{mestqmu} and \eqref{r3qmu} we get
$$ \int_0^\infty r^3 Qe^{\mu\phi_{T_1}+\sigma-\mu\frac{r^2}{2}}dr=O(|\log \mu|).$$
Notice also that
$$\int_0^\infty r^7 Qe^{\mu\phi_{T_1}+\sigma-\mu\frac{r^2}{2}}dr=\frac{1}{\mu^2}\int_0^\infty \tau^7 \frac{1}{(\mu+\tau^2)^2}e^{\mu\phi_{T_1}(\frac{\tau}{\sqrt{\mu}})+\sigma(\frac{\tau}{\sqrt{\mu}})-\frac{\tau^2}{2}}d\tau,$$ 
and since $\int_0^\infty \tau^7 \frac{1}{(\mu+\tau^2)^2}e^{\mu\phi_{T_1}(\frac{\tau}{\sqrt{\mu}})+\sigma(\frac{\tau}{\sqrt{\mu}})-\frac{\tau^2}{2}}d\tau$ is uniformly bounded  with respect to $\mu$ we deduce \eqref{2momd2mu}.
The last step is to prove \eqref{2momd3mu}.
\item Proof of \eqref{2momd3mu}.\newline
If we use that $$\partial_\mu^3Q_\mu=\Big[\Big(\phi_{\partial^2_\mu Q_\mu}+\Big(\phi_{\partial_\mu Q_\mu}-\frac{r^2}{2}\Big)^2+2\phi_{\partial^2_\mu Q_\mu}\Big)\Big(\phi_{\partial_\mu Q_\mu}-\frac{r^2}{2}\Big)+\phi_{\partial^3_\mu Q_\mu}\Big]Q_\mu,$$ then by proceeding as in the proof of the previous inequalities one can easily deduce \eqref{2momd3mu} and the rest of the proof is left to the reader.
\end{itemize}
\end{proof}
Now we are ready to decide what the good approximate profile will be. Indeed, we choose the following profile: 
\begin{equation}\label{profile}
\tilde{Q}_\mu=Q_\mu- \tilde{\mu}\partial_\mu Q_\mu,
\end{equation}
where 
\begin{equation}\label{mutilde}
\tilde{\mu}=\frac{\int_{\RR^2}Q_\mu-8\pi}{\int_{\RR^2}\partial_\mu Q_\mu}=\mu+O(\frac{\mu}{|\log\mu|}),
\end{equation}
which can be deduced easily from Lemma \ref{lemma:appM_2mom}.
This choice can be justified by 2 reasons:
The first one is that the mass of $\varepsilon$ will be zero and this implies that 
$$(\mathcal{M}_\mu^y\varepsilon,\partial_M n_\infty)=(\varepsilon,1)=0.$$
The second reason is that if we plug $\tilde{Q_\mu}$ in
\begin{align*}
\Delta\tilde{Q}_\mu+\mu\Lambda\tilde{Q}_\mu-\nabla\cdot(\tilde{Q}_\mu\nabla\phi_{\tilde{Q}_\mu})&=-\tilde{\mu}{\mathcal{L}_\mu^y}(\partial_\mu Q_\mu)-\tilde{\mu}^2\nabla\cdot(\partial_\mu Q_\mu\nabla\phi_{\partial_\mu Q_\mu})\\
&=\tilde{\mu}\Lambda Q_\mu -\tilde{\mu}^2\nabla\cdot(\partial_\mu Q_\mu\nabla\phi_{\partial_\mu Q_\mu}),
\end{align*}
the identity $-{\mathcal{L}_\mu^y}(\partial_\mu Q_\mu)=\Lambda Q_\mu$ yields that the error $E=\tilde{\mu}\Lambda Q_\mu -\tilde{\mu}^2\nabla\cdot(\partial_\mu Q_\mu\nabla\phi_{\partial_\mu Q_\mu})$ is of order $\mu^2$ when it is projected on ${\mathcal{M}_\mu^y}\varepsilon$. 
 Indeed, since $(\varepsilon,|\cdot|^2)=(\varepsilon,1)=0$ it follows that $$(\mathcal{M}_\mu^y(\Lambda Q_\mu),\varepsilon)=(2-\frac{M}{2\pi}-\mu|\cdot|^2,\varepsilon)=0,$$
which implies 
$$(E,{\mathcal{M}_\mu^y}\varepsilon)_{L^2}=-\tilde{\mu}^2(\nabla\cdot(\partial_\mu Q_\mu\nabla\phi_{\partial_\mu Q_\mu}),{\mathcal{M}_\mu^y}\varepsilon)_{L^2}.$$
This cancellation makes the projection of our error on $\varepsilon$ with the inner product $(\cdot,\cdot)_{{\mathcal{M}_\mu^y}}$ of order $\mu^2$ which is fundamental for closing the energy estimate.
\section{Derivation of the law of $\mu(s(t))$}
Recall that,
$$ v(y,s) = \tilde{Q}_\mu(y)+\varepsilon(y).$$
The conservation of second moment in the $x$ coordinate transaltes into:
\begin{align}\label{cons2moment}
\int_{\RR^2}v|y|^2 dy=\frac{I}{\mu R(t)^2},
\end{align}
where $I$ is the second moment of the initial data $u_0$. 
We will make an abuse of notation and use $\mu(t)$ for $\mu(s(t))$.
\begin{lemma}\label{lemma:mu_law}
Let $v$ being the solution of \eqref{v2rescal} with $8\pi$ mass and $\varepsilon$ satisfying $$(\varepsilon,|\cdot|^2)_{L^2}=0.$$
Then, 
 \begin{equation}\label{murelwithepsilon}
\frac{I}{\mu R(t)^2}=-\frac{M}{2\pi}\log(\mu)+O(1)
\end{equation}
and
 \begin{equation}\label{mu_lawt}
\mu(t)=\frac{2\pi I}{M(2t+1)\log(2t+1)+O(t\log\log(2t+1))}.
\end{equation}
If we consider $\mu$ as a function of $s$, we have:
\begin{equation}\label{mu_laws}
\Big|\mu(s)-\frac{1}{2s}\Big|\leq\frac{C'}{s\log(s)}.
\end{equation}
\end{lemma}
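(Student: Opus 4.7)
My plan is to first derive the algebraic identity \eqref{murelwithepsilon} from the conservation of the second moment and the modulation orthogonality, then invert it by a bootstrap to obtain \eqref{mu_lawt}, and finally convert to the $s$-variable through the defining ODE $ds/dt=1/(\mu R^2)$. The starting point is \eqref{cons2moment} combined with $(\varepsilon,|\cdot|^2)_{L^2}=0$, which would give
\begin{equation*}
\frac{I}{\mu R^2(t)} \;=\; \int_{\RR^2} v\,|y|^2\,dy \;=\; \int_{\RR^2} \tilde{Q}_\mu\,|y|^2\,dy \;=\; \int_{\RR^2} Q_\mu\,|y|^2\,dy \;-\; \tilde{\mu}\int_{\RR^2}\partial_\mu Q_\mu\,|y|^2\,dy.
\end{equation*}
Using the exact formula \eqref{2M_Q_mu} together with the mass asymptotic \eqref{approxM_Q_mu} (so that $1-M(\mu)/(8\pi)=-\tfrac{\mu\log\mu}{4\pi}+O(\mu)$), the first integral would equal $-\tfrac{M(\mu)}{2\pi}\log\mu+O(1)$. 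The correction term is bounded by $|\tilde{\mu}|\cdot O(1/\mu)=O(1)$ thanks to $\tilde{\mu}=\mu+O(\mu/|\log\mu|)$ from \eqref{mutilde} and the bound \eqref{2momdmu}. Combining these would yield \eqref{murelwithepsilon}.

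To obtain \eqref{mu_lawt}, I would set $L(t):=-\log\mu(t)$ and rewrite \eqref{murelwithepsilon} as $\mu R^2\,L=\tfrac{2\pi I}{M(\mu)}(1+O(1/L))$. Taking logarithms and using $M(\mu)\to 8\pi$ produces the fixed-point relation $L=\log R^2+\log L+O(1)$, which I would solve by successive approximation: a crude bound gives $L\sim\log R^2$, and plugging back yields $L=\log(2t+1)+\log\log(2t+1)+O(1)$. Substituting this into the previous identity produces \eqref{mu_lawt}, after also noting that $M(\mu)=8\pi+O(\mu|\log\mu|)=8\pi+O(1/R^2)$ at the relevant scale.

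For the $s$-variable bound \eqref{mu_laws}, I would integrate $ds/dt=1/(\mu R^2)=\tfrac{M\,L}{2\pi I}(1+O(1/L))$ from $0$ to $t$. Using $\int_1^{R^2}\log u\,du=R^2\log R^2-R^2+1$ and $M\to 8\pi$, this gives $s=\tfrac{2R^2\log R^2}{I}+O\!\bigl(\tfrac{R^2\log\log R^2}{I}\bigr)$, equivalently $\tfrac{Is}{2R^2}=L+O(1)$. The decisive step is then to substitute this directly back into $\mu R^2\,L=\tfrac{2\pi I}{M}(1+O(1/L))$: replacing $L$ by $\tfrac{Is}{2R^2}+O(1)$ gives $\mu\cdot\tfrac{Is}{2}+O(\mu R^2)=\tfrac{I}{4}+O(1/L)$, and since $\mu R^2=O(1/L)$, the error on the left is absorbed, yielding $\mu s=\tfrac{1}{2}+O(1/\log s)$, i.e.\ \eqref{mu_laws}. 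The main technical obstacle is precisely this last cancellation: the asymptotic expansions of $s(t)$ and of $\mu(t)R^2(t)$ individually each carry a spurious $\log\log/\log$ relative error, and it is only by substituting one into the other through the common variable $L$ that these errors cancel to produce the sharper $O(1/(s\log s))$ bound required for the bootstrap \eqref{muhypboot}.
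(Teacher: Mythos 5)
Your proposal is correct and follows essentially the same route as the paper: the identity \eqref{murelwithepsilon} obtained from the conserved second moment \eqref{cons2moment} together with $(\varepsilon,|\cdot|^2)_{L^2}=0$ and Lemmas \ref{lemma:appM_2mom}--\ref{lemma:approx2momqmu}, then inversion of the resulting relation for $\mu(t)$, then integration of $ds/dt=1/(\mu R^2)$. The only difference is at the final step, where the paper simply asserts that \eqref{mu_laws} ``follows'' from the expansion of $s(t)$: your substitution through the common quantity $L=-\log\mu$ (via $\tfrac{Is}{2R^2}=L+O(1)$) makes explicit the cancellation of the $\log\log/\log$ relative errors carried separately by $\mu(t)$ and $s(t)$, which is exactly what is needed to reach the sharp $O\bigl(1/(s\log s)\bigr)$ bound rather than the weaker $O\bigl(\log\log s/(s\log s)\bigr)$ a naive combination would give.
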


\begin{proof}[Proof of Lemma \ref{lemma:mu_law}]

If we combine \eqref{approxM_Q_mu} and \eqref{2M_Q_mu} then
\begin{align}\label{approx2M_Q_mu}
\int_{\RR^2}Q_\mu(y)|y|^2dy=\frac{2M}{\mu}\Big(\frac{-2\mu\log(\mu)+O(\mu)}{8\pi}\Big)=-\frac{M\log(\mu)}{2\pi}+O(1).
\end{align}
It follows,from \eqref{cons2moment}, $(\varepsilon,|\cdot|^2)=0$, and \eqref{approx2M_Q_mu} that
\begin{align}\label{mulawproof}
\frac{I}{R^2\mu}&=\int_{\RR^2}\tilde{Q}_\mu(y)|y|^2dy=-\frac{M}{2\pi}\log(\mu)+O(1)-\tilde{\mu}\int_{\RR^2}\partial_\mu Q_\mu |y|^2dy.
\end{align}
From \eqref{2momdmu} we deduce that
\begin{align}\label{2momdmuQmu}
\int_{\RR^2}\partial_\mu Q_\mu(y)|y|^2dy=O\Big(\frac{1}{\mu}\Big).
\end{align}
Hence, by using $\tilde{\mu}=\mu+O\Big(\frac{\mu}{|\log\mu|}\Big)$ it follows that
\begin{align}\label{mulaw}
\frac{I}{R^2\mu}&=\int_{\RR^2}\tilde{Q}_\mu(y)|y|^2dy=-\frac{M}{2\pi}\log(\mu)+O(1),
\end{align}
and taking the $\log$ of \eqref{mulaw}, we get that,
$$\log\Big(\frac{1}{\mu}\Big)-\log(2t+1)=\log\log\Big(\frac{1}{\mu}\Big)+O(1).$$
Consequently,
$$\log\log\Big(\frac{1}{\mu}\Big)=\log\log(2t+1)+O(1),$$
which imply \eqref{mu_lawt}.
To find $\mu(s)$ we use that 

\begin{align}
\frac{ds}{dt}=\frac{1}{\mu R^2}=\frac{M\log(2t+1)+O(\log\log(2t+1))}{2\pi I}.
\end{align}
Hence, if we integrate we find $$s(t)=\frac{2(2t+1)M\log(2t+1)+O(t\log\log(2t+1))}{2\pi I},$$ and \eqref{mu_laws} follows
with $C'$ in \eqref{mu_laws} a uniform constant with respect to $A_1$, and hence if we choose $A_1$ sufficiently large we get $C'<\frac{A_1}{100}$.
\end{proof}
\bigskip

Now we need to find a bound on $\mu_s$ for the energy estimates.
\begin{lemma}\label{museq}
\begin{align}\label{musbound}
\mu_s= -2\mu^2+O(\frac{\mu^2}{|\log\mu|}).
\end{align}
\end{lemma}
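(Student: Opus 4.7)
The plan is to exploit the conservation of the second moment combined with the modulation orthogonality $(\varepsilon,|\cdot|^2)_{L^2}=0$, which together give an exact algebraic relation between $\mu$ and $s$. Rescaling the identity $\int u(x,t)|x|^2\,dx=I$ to the $(y,s)$ variables gives $\int v\,|y|^2\,dy=I/(\mu R^2)$, and the orthogonality removes the $\varepsilon$-contribution, leaving
\begin{equation*}
\mathcal{J}(\mu):=\int_{\mathbb{R}^2}\tilde{Q}_\mu(y)|y|^2\,dy=\frac{I}{\mu R^2(t)}.
\end{equation*}
Differentiating this identity in $s$, using $\frac{d}{ds}=\mu R^2\frac{d}{dt}$ together with $R^2=2t+1$ and $\mu_t=\mu_s/(\mu R^2)$, yields $\frac{d}{ds}\bigl(\tfrac{1}{\mu R^2}\bigr)=-(\mu_s+2\mu^2)/(\mu^2 R^2)$; substituting $I/(\mu R^2)=\mathcal{J}(\mu)$ on the right gives the purely algebraic equation
\begin{equation*}
\mu_s\bigl[\mu\mathcal{J}'(\mu)+\mathcal{J}(\mu)\bigr]=-2\mu^2\mathcal{J}(\mu).
\end{equation*}

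Next I would quantify the two terms in the bracket. Combining \eqref{2M_Q_mu} with \eqref{approxM_Q_mu}, as was already used in Lemma \ref{lemma:mu_law}, gives $\mathcal{J}(\mu)=-\frac{M}{2\pi}\log\mu+O(1)\asymp|\log\mu|$. Differentiating the definition $\tilde{Q}_\mu=Q_\mu-\tilde{\mu}(\mu)\partial_\mu Q_\mu$ produces
\begin{equation*}
\mathcal{J}'(\mu)=(1-\tilde{\mu}'(\mu))\int\partial_\mu Q_\mu|y|^2dy-\tilde{\mu}(\mu)\int\partial^2_\mu Q_\mu|y|^2dy.
\end{equation*}
From $\tilde{\mu}=(M-8\pi)/M'$ and Lemma \ref{lemma:appM_2mom} one computes $\tilde{\mu}=O(\mu)$ and $\tilde{\mu}'=1-(M-8\pi)M''/(M')^2=1+O(1/|\log\mu|)$, while \eqref{2momdmu}, \eqref{2momd2mu} give $\int\partial_\mu Q_\mu|y|^2dy=O(1/\mu)$ and $\int\partial^2_\mu Q_\mu|y|^2dy=O(1/\mu^2)$. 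Therefore each contribution to $\mathcal{J}'(\mu)$ is $O(1/\mu)$, so $\mathcal{J}'(\mu)=O(1/\mu)$ and in particular $\mu\mathcal{J}'(\mu)=O(1)$.

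Finally, since $\mathcal{J}(\mu)\asymp|\log\mu|$ and $\mu\mathcal{J}'(\mu)=O(1)$, the denominator in the algebraic equation factors as $\mathcal{J}(\mu)\bigl(1+O(1/|\log\mu|)\bigr)$, and dividing yields
\begin{equation*}
\mu_s=\frac{-2\mu^2\mathcal{J}(\mu)}{\mathcal{J}(\mu)(1+O(1/|\log\mu|))}=-2\mu^2+O\Big(\frac{\mu^2}{|\log\mu|}\Big),
\end{equation*}
as claimed. The main point requiring care is the bound $\mathcal{J}'(\mu)=O(1/\mu)$: one must use the sharp identity $\tilde{\mu}'=1+O(1/|\log\mu|)$ coming from the explicit formula for $\tilde{\mu}$, but crucially no sharp constant on $\mathcal{J}'(\mu)$ is needed, because the logarithmic growth of $\mathcal{J}(\mu)$ in the denominator automatically absorbs $\mu\mathcal{J}'(\mu)$ and produces the claimed $1/|\log\mu|$ correction.
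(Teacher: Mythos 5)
Your proof is correct and follows essentially the same route as the paper: differentiate the relation $\int_{\RR^2}\tilde{Q}_\mu|y|^2dy=\frac{I}{\mu R^2}$ in $s$, bound $\int_{\RR^2}\partial_\mu\tilde{Q}_\mu|y|^2dy=O(1/\mu)$ via Lemmas \ref{lemma:appM_2mom} and \ref{lemma:approx2momqmu} together with $\partial_\mu\tilde{\mu}=1+O(1/|\log\mu|)$, and conclude because $\int_{\RR^2}\tilde{Q}_\mu|y|^2dy\asymp|\log\mu|$ dominates. Your final algebraic step solving for $\mu_s$ is just a more explicit version of what the paper leaves implicit, so there is nothing to add.
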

\begin{proof}
To prove \eqref{musbound}, we differentiate $\frac{I}{R^2\mu}=\int_{\RR^2}\tilde{Q}_\mu(y)|y|^2dy$ with respect to $s$.
Hence,
\begin{align}
I\frac{d}{ds}\Big(\frac{1}{R^2\mu}\Big)=\mu_s\int_{\RR^2}\partial_\mu\tilde{Q}_\mu(y)|y|^2dy.
\end{align}
To derive \eqref{musbound} we need to estimate $\int_{\RR^2}\partial_\mu\tilde{Q}_\mu(y)|y|^2dy$.
Indeed, since $\partial_\mu\tilde{Q}_\mu(y)=\partial_\mu Q_\mu-\partial_\mu \tilde{\mu}\partial_\mu Q_\mu-\tilde{\mu}\partial^2_\mu Q_\mu$.
And from Lemma \ref{lemma:appM_2mom} we compute 
\begin{align}\label{dmumutilde}
\partial_\mu\tilde{\mu}=1+O\Big(\frac{1}{|\log\mu|}\Big).
\end{align}
Hence,
\begin{align}\label{dmutildeqmu}
\partial_\mu \tilde{Q}_\mu=-\mu\partial_\mu^2Q_\mu+O\Big(\frac{\mu\partial_\mu^2 Q_\mu+\partial_\mu Q_\mu}{|\log\mu|}\Big).
\end{align}
From Lemma \ref{lemma:approx2momqmu} we deduce
\begin{align}\label{2momdmutildeqmu}
\int_{\RR^2}\partial_\mu\tilde{Q}_\mu|y|^2dy&=-\mu\int_{\RR^2}\partial_\mu^2 Q_\mu|y|^2 dy+O\Big(\int_{\RR^2}\frac{\mu\partial_\mu^2 Q_\mu+\partial_\mu Q_\mu}{|\log\mu|}|y|^2dy\Big)\nonumber\\
&=-\mu\int_{\RR^2}\partial_\mu^2 Q_\mu|y|^2 dy+O\Big(\frac{1}{\mu|\log\mu|}\Big)=-\frac{C}{\mu}+O\Big(\frac{1}{\mu|\log\mu|}\Big),
\end{align}
and similarly we obtain from Lemma \ref{lemma:approx2momqmu},
\begin{align}\label{2momd2mutildeqmu}
\int_{\RR^2}\partial_\mu^2\tilde{Q}_\mu|y|^2dy=\frac{C}{\mu^2}+O\Big(\frac{1}{\mu^2|\log\mu|}\Big),
\end{align}
where $C>0$ is a constant depending on the constant $C$ in \eqref{2momd2mu} and \eqref{2momd3mu}.
Finally, since $\frac{dt}{ds}=\mu R^2$ and $R'(t)R(t)=1$ it follows
\begin{align}\label{exact2momdmutildeqmu}
-I\Big(\frac{\frac{d}{ds}(\mu R(t)^2)}{\mu^2R^4}\Big)&=-I\Big(\frac{\mu_s}{\mu^2R^2}+2\frac{R'(t)R}{R^2}\Big)=-I\Big(\frac{\mu_s}{\mu^2R^2}+\frac{2}{R^2}\Big)\nonumber\\
&=\mu_s\int_{\RR^2}\partial_\mu\tilde{Q}_\mu(y)|y|^2dy=\mu_s O\Big(\frac{1}{\mu}\Big),
\end{align}
which concludes the proof.
\end{proof}

\section{Bounds on the potential $\nabla\phi_{\varepsilon}$}

 With the orthogonality conditions  $$(\varepsilon,1)=(\varepsilon,|\cdot|^2)=0,$$ $$(\mathcal{M}_\mu^y\varepsilon,\varepsilon)=\|\varepsilon\|_{L^2_{Q_\mu}}^2-\|\nabla\phi_{\varepsilon}\|_{L^2}^2$$ does not control $\|\varepsilon\|_{L^2_{Q_\mu}}$ uniformly in $\mu$. It turns out that we have a remarkable nonlinear structure that yields a control of $\|\nabla\phi_{\varepsilon}\|_{L^2}^2$ and hence $\|\varepsilon\|_{L^2_{Q_\mu}}$. 
\begin{proposition}\label{proposition:boundonphieps}
\begin{align}
\int_{\RR^2}|\nabla\phi_{\varepsilon}|^2dy=O(\mu).
\end{align}
\end{proposition}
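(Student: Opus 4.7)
The plan is to combine the dissipation of the free energy $\mathcal{F}(v) = \mathcal{F}(u)$ along the flow, the logarithmic Hardy--Littlewood--Sobolev lower bound $\mathcal{F}(v) \geq \mathcal{F}(Q)$ (valid because $\int v = 8\pi$), and an algebraic expansion that isolates $\|\nabla\phi_\varepsilon\|_{L^2}^2$. Substituting $v = \tilde{Q}_\mu + \varepsilon$ into $\mathcal{F}$ and using both the same-mass constraint $\int v = \int \tilde{Q}_\mu = 8\pi$ and the symmetry $(\tilde{Q}_\mu, \phi_\varepsilon) = (\varepsilon, \phi_{\tilde{Q}_\mu})$, one arrives at the identity
\[
\mathcal{F}(v) - \mathcal{F}(\tilde{Q}_\mu) = \int v \log\bigl(v/\tilde{Q}_\mu\bigr)\, dy + \int \varepsilon \log\tilde{Q}_\mu - (\varepsilon, \phi_{\tilde{Q}_\mu}) - \tfrac12\|\nabla\phi_\varepsilon\|_{L^2}^2,
\]
in which the first term on the right is nonnegative by Jensen. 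The proof then reduces to bounding each of $\mathcal{F}(\tilde{Q}_\mu) - \mathcal{F}(v)$, the linear-in-$\varepsilon$ terms, and the relative entropy $\int v \log(v/\tilde{Q}_\mu)$ by $O(\mu)$.

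For the free-energy gap, Log HLS gives $\mathcal{F}(\tilde{Q}_\mu) - \mathcal{F}(v) \leq \mathcal{F}(\tilde{Q}_\mu) - \mathcal{F}(Q)$, and I would prove $\mathcal{F}(\tilde{Q}_\mu) - \mathcal{F}(Q) = O(\mu)$ directly: a computation using $\partial_\mu \mathcal{F}(Q_\mu) = (\log 8 + 1) M'(\mu) - \tfrac{\mu}{2}\int |y|^2 \partial_\mu Q_\mu$ combined with Lemmas \ref{lemma:appM_2mom}--\ref{lemma:approx2momqmu} gives $\mathcal{F}(Q_\mu) - \mathcal{F}(Q) = 2(\log 8 + 1)\mu\log\mu + O(\mu)$, and the choice $\tilde{Q}_\mu = Q_\mu - \tilde\mu\,\partial_\mu Q_\mu$ with $\tilde\mu = \mu + O(\mu/|\log\mu|)$ from \eqref{mutilde} is engineered so that the first-order Taylor correction $-\tilde\mu\,\partial_\mu\mathcal{F}(Q_\mu)$ cancels the $\mu\log\mu$ divergence exactly, while the second-order remainder $O(\tilde\mu^2 \partial_\mu^2 \mathcal{F}(Q_\mu))$ is $O(\mu)$ because $\partial_\mu^2\mathcal{F}(Q_\mu) = O(1/\mu)$ by a similar computation. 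For the linear-in-$\varepsilon$ piece, the Euler--Lagrange identity $\log Q_\mu = \log 8 + \phi_{Q_\mu} - \mu|y|^2/2$ from \eqref{logeqmu}, the decomposition $\log\tilde{Q}_\mu = \log Q_\mu + \log(1 - \tilde\mu\,\partial_\mu Q_\mu/Q_\mu)$, and the relation $\partial_\mu Q_\mu/Q_\mu = \phi_{\partial_\mu Q_\mu} - |y|^2/2$ of Proposition \ref{proposition:algebraic_identities}, combined with the orthogonalities $(\varepsilon,1) = (\varepsilon, |y|^2) = 0$, produce a cancellation of both the linear and the leading quadratic contributions; the nonlinear Taylor remainder is then controlled by splitting into near- and far-field regions and using the pointwise bounds of Propositions \ref{proposition:comp1}--\ref{proposition:comp2} together with the exponential decay of $Q_\mu$ for $|y| \gtrsim 1/\sqrt\mu$.

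The main obstacle, and the ``remarkable nonlinear structure'' alluded to in the text, is the uniform $O(\mu)$ upper bound on the relative entropy $\int v \log(v/\tilde{Q}_\mu)$: the naive pointwise quadratic approximation $\int v\log(v/\tilde{Q}_\mu) \approx \tfrac12\int \varepsilon^2/\tilde{Q}_\mu$ is not valid, since $\varepsilon/\tilde{Q}_\mu$ need not be small in $L^\infty$. The strategy is to pass to the reference $Q$ via $\int v\log(v/\tilde{Q}_\mu) = \int v\log(v/Q) + \int v \log(Q/\tilde{Q}_\mu)$, to control the second integral by $O(\mu)$ using the second-moment bound $\int v |y|^2 = I/(\mu R^2) = O(|\log\mu|)$ from \eqref{mulawproof} together with the cancellation $\mu - \tilde\mu = O(\mu/|\log\mu|)$, and to exploit the analogous identity at $Q$ (where the linear terms vanish because $Q$ extremises $\mathcal{F}$ at mass $8\pi$),
\[
\int v \log(v/Q) = \mathcal{F}(v) - \mathcal{F}(Q) + \tfrac12 \|\nabla\phi_{v-Q}\|_{L^2}^2,
\]
which, combined with an independent bound $\|\nabla\phi_{\tilde{Q}_\mu - Q}\|_{L^2}^2 = O(\mu)$ coming from the explicit asymptotics of $\tilde{Q}_\mu$ and the Log HLS reformulation $\|\nabla\phi_{v-Q}\|_{L^2}^2 \leq 2\int v\log(v/Q)$, closes the system. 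The delicate step is arranging these estimates in a non-circular way, the key point being that the free-energy gap $\mathcal{F}(v) - \mathcal{F}(Q)$ enters only through a specific combination that allows it to be absorbed into the $\|\nabla\phi_\varepsilon\|_{L^2}^2$ being estimated.
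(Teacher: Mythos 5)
Your route is genuinely different from the paper's, but it has a structural gap at exactly the point you flag as ``the main obstacle'': the $O(\mu)$ upper bound on the relative entropy $\int v\log(v/\tilde{Q}_\mu)\,dy$ is never actually obtained, and the mechanism you propose for it is circular. Indeed, combine your two identities: from the expansion around $\tilde{Q}_\mu$ you get $\tfrac12\|\nabla\phi_\varepsilon\|_{L^2}^2=\mathcal{F}(\tilde{Q}_\mu)-\mathcal{F}(v)+\int v\log(v/\tilde{Q}_\mu)+(\varepsilon,\log\tilde{Q}_\mu-\phi_{\tilde{Q}_\mu})$, and you propose to bound the entropy term via $\int v\log(v/\tilde{Q}_\mu)=\int v\log(v/Q)+\int v\log(Q/\tilde{Q}_\mu)$ together with $\int v\log(v/Q)=\mathcal{F}(v)-\mathcal{F}(Q)+\tfrac12\|\nabla\phi_{v-Q}\|_{L^2}^2$. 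Substituting, the terms $\mathcal{F}(v)$ cancel (so the dissipation and Log HLS play no role), and since $\phi_{v-Q}=\phi_\varepsilon+\phi_{\tilde{Q}_\mu-Q}$ the right-hand side contains $\tfrac12\|\nabla\phi_\varepsilon\|_{L^2}^2$ with exactly the coefficient $\tfrac12$: it cancels the left-hand side identically, and what remains is the vacuous statement $0\le \mathcal{F}(\tilde{Q}_\mu)-\mathcal{F}(Q)+(\nabla\phi_\varepsilon,\nabla\phi_{\tilde{Q}_\mu-Q})+\tfrac12\|\nabla\phi_{\tilde{Q}_\mu-Q}\|_{L^2}^2+O(\mu)$. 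There is no ``specific combination that allows absorption''; an upper bound on the relative entropy relative to $\tilde{Q}_\mu$ at the $O(\mu)$ level is essentially equivalent to the inequality you are trying to prove. Two further problems: (i) even if circularity were avoided, anything extracted from monotonicity of $\mathcal{F}$ and Log HLS is controlled by the initial free-energy gap, a quantity of fixed size, whereas the proposition must hold with $O(\mu(s))$, $\mu(s)\sim \tfrac1{2s}\to 0$, at every time of the bootstrap interval; your scheme contains no decay mechanism in $s$. (ii) The claim $\mathcal{F}(\tilde{Q}_\mu)-\mathcal{F}(Q)=O(\mu)$ is asserted, not proved: the individual contributions ($\tfrac12\int Q_\mu\phi_{Q_\mu}-\tfrac12\int Q\phi_Q$, $\tfrac{\mu}{2}\int|y|^2Q_\mu$, the Taylor remainder along the segment from $Q_\mu$ to $\tilde{Q}_\mu$, and continuity at $\mu=0$) are individually of size $\mu|\log\mu|$ or worse, so this requires a genuine computation with cancellations, not a one-line appeal to \eqref{mutilde}.

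The paper's proof is of a completely different nature and avoids all of this: it is a virial (second-moment) identity along the flow, not an energy comparison. One pairs the equation \eqref{eqn:linQmu} for $\varepsilon$ with $|y|^2$; since the modulation enforces $(\varepsilon,|\cdot|^2)\equiv 0$, the time derivative vanishes, the linear terms $(\mathcal{L}_\mu^y\varepsilon,|\cdot|^2)$ and $(\Lambda\varepsilon,|\cdot|^2)$ vanish by the algebraic identities and the orthogonality $(\varepsilon,1)=(\varepsilon,|\cdot|^2)=0$, and the ``remarkable nonlinear structure'' is simply that $-(\nabla\cdot(\varepsilon\nabla\phi_\varepsilon),|\cdot|^2)=\int_{\RR^2}|\nabla\phi_\varepsilon|^2dy$ exactly (the classical virial computation, using $\int\varepsilon=0$). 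The bound then follows because the two error contributions $(E,|\cdot|^2)$ and $(F,|\cdot|^2)$, each of size $\mu|\log\mu|$, cancel to leave $O(\mu)$, while $(\Theta_\mu(\varepsilon),|\cdot|^2)=O(\sqrt{A}\,\mu^{3/2}|\log\mu|^{1/2})$. If you want to salvage a variational proof, you would need an independent, time-uniform $O(\mu(s))$ bound on $\int v\log(v/\tilde{Q}_\mu)$ that does not pass back through $\mathcal{F}(v)$ and $\|\nabla\phi_{v-Q}\|_{L^2}^2$; as written, your argument does not supply one.
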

\begin{proof}
We take the $L^2$ inner product of (\ref{eqn:linQmu}) with $|\cdot|^2$.
\begin{align*}
\frac{d}{ds}(\varepsilon,|\cdot|^2) &=(\mathcal{L}_{\mu}\varepsilon,|\cdot|^2)-(\nabla\cdot(\varepsilon\nabla\phi_{\varepsilon}),|\cdot|^2)+\frac{{\mu}_s}{2{\mu}}(\Lambda\varepsilon,|\cdot|^2)+(\Theta_{\mu}(\varepsilon),|\cdot|^2)\\
&+(F,|\cdot|^2)+(E,|\cdot|^2).
\end{align*}
 Since $$(\varepsilon,|\cdot|^2)=(\varepsilon,1)=0,$$ 
\begin{align*}
(\mathcal{L}_{\mu}\varepsilon,|\cdot|^2)&=-2(\tilde{Q}_\mu\nabla{\mathcal{M}_\mu^y}\varepsilon,y)=2({\mathcal{M}_\mu^y}\varepsilon,\Lambda \tilde{Q}_{\mu})\\
&=2(\varepsilon,{\mathcal{M}_\mu^y}\Lambda \tilde{Q}_{\mu})=(2-\frac{M}{2\pi})(\varepsilon,1)-\mu(\varepsilon,|y|^2)=0,
\end{align*}
 and
 $$( \Lambda \varepsilon,|\cdot|^2)=-2(\varepsilon,|\cdot|^2)=0.$$
In addition,
\begin{align}
-(\nabla\cdot(\varepsilon\nabla\phi_{\varepsilon}),|\cdot|^2)=2\int_{\RR^2}\varepsilon y\cdot\nabla\phi_{\varepsilon}dy=\int_{\RR^2}|\nabla\phi_{\varepsilon}|^2dy.
\end{align}
The last equality comes from the fact that $\int_{\RR^2}\varepsilon dy=0$, $-\Delta\phi_{\varepsilon}=\varepsilon$.
Since $F=\frac{\mu_s}{2\mu}\Lambda \tilde{Q}_\mu -\mu_s\partial_\mu \tilde{Q}_\mu$, to estimate $(F,|\cdot|^2)$ we first calculate 
$(\Lambda \tilde{Q}_{\mu},|\cdot|^2)$ and $(\partial_{\mu} \tilde{Q}_{\mu},|\cdot|^2)$.
Indeed, from \eqref{mulawproof} we get
\begin{align}
(\Lambda \tilde{Q}_{\mu},|\cdot|^2)=-2(\tilde{Q}_{\mu},|\cdot|^2)=-\frac{2I}{R^2\mu},
\end{align}
and by using \eqref{2momdmutildeqmu} we deduce
\begin{align}
\mu_s(\partial_{\mu} \tilde{Q}_{\mu},|\cdot|^2)=-I\Big(\frac{\mu_s}{\mu^2R^2}+\frac{2}{R^2}\Big).
\end{align}
Hence,
$$(F,|\cdot|^2)=(,|\cdot|^2)=\frac{2I}{R^2}=-\frac{M\mu\log\mu}{\pi}+O(\mu).$$
Furthermore, since $E=\tilde{\mu}\Lambda Q_{\mu}-\tilde{\mu}^2\nabla\cdot(\partial_{\mu}Q_{\mu}\nabla\phi_{\partial_{\mu} Q_{\mu}})$ and $\tilde{\mu}=\mu+O\Big(\frac{\mu}{|\log\mu|}\Big)$, it follows that 
\begin{align}
(E,|\cdot|^2)&=-2\tilde{\mu}(Q_\mu,|\cdot|^2)+2\tilde{\mu}^2(\nabla\phi_{\partial_{\mu} Q_{\mu}}\cdot y,\partial_\mu Q_\mu)\nonumber\\
&=-2\mu(Q_\mu,|\cdot|^2)+2\tilde{\mu}^2(\nabla\phi_{\partial_{\mu} Q_{\mu}}\cdot y,\partial_\mu Q_\mu)+O(\mu)\nonumber\\
&=\frac{M\mu\log\mu}{\pi}+2\tilde{\mu}^2(\nabla\phi_{\partial_{\mu} Q_{\mu}}\cdot y,\partial_\mu Q_\mu)+O(\mu).
\end{align}
From Proposition \ref{proposition:comp2} we get that
$$2\tilde{\mu}^2(\nabla\phi_{\partial_{\mu} Q_{\mu}}\cdot y,\partial_\mu Q_\mu)\lesssim \mu^2|\log\mu|^2.$$
Hence,
\begin{align}
(F+E,|\cdot|^2)=\underbrace{-\frac{M\mu\log\mu}{\pi}+\frac{M\mu\log\mu}{\pi}}_{\mbox{ Cancellation }}+O(\mu).
\end{align}
To estimate $(\Theta_\mu(\varepsilon),|\cdot|^2)$ we use that $\tilde{Q}_{\mu}-Q_{\mu}=-\tilde{\mu}\partial_\mu Q_\mu$.
Hence,
\begin{align}
(\Theta_{\mu}(\varepsilon),|\cdot|^2)=-\tilde{\mu}\Big[(\partial_\mu Q_\mu,\nabla\phi_{\varepsilon}\cdot y)+(\varepsilon,\nabla_{\phi_{\partial_\mu Q_\mu}}\cdot y)\Big].
\end{align}
It follows that,

\begin{align}
|(\Theta_{\mu}(\varepsilon),|\cdot|^2)|\lesssim \mu\Big[\|\nabla\phi_{\varepsilon}\|_{L^2}\Big(\int_{\RR^2}|\partial_\mu Q_\mu|^2|y|^2dy\Big)^{\frac{1}{2}}+\|\varepsilon\|_{L^2_{Q_\mu}}\Big(\int_{\RR^2}\frac{|\nabla\phi_{\partial_\mu Q_\mu}|^2|y|^2}{Q_\mu}dy\Big)^{\frac{1}{2}}\Big].
\end{align}
Thanks to $|\nabla\phi_{\partial_\mu Q_\mu}|\lesssim\min(\mu |y||\log |y||,\frac{|\log |y||}{|y|})$,\quad $\partial_\mu Q_\mu\lesssim Q_\mu|y|^2$ and the bootstrap assumption we deduce
\begin{align}
|(\Theta_{\mu}(\varepsilon),|\cdot|^2)|\lesssim\sqrt{A}\mu^{\frac{3}{2}}\sqrt{|\log \mu|}.
\end{align}
Hence, if we assume that $\mu$ was choosen small enough, we deduce that
\begin{align}\label{musapprox}
\int_{\RR^2}|\nabla\phi_{\varepsilon}|^2dy\leq C\mu,
\end{align}
with $C$ uniform in $\mu$ which concludes the proof.
\end{proof}

\section{Energy estimates}
We want to prove the following energy bound.
\begin{proposition}\label{proposition:energybound} 
\begin{align}
\frac{1}{2}\frac{d}{ds}({\mathcal{M}_\mu^y}\varepsilon,\varepsilon)+\mu(K_2-\frac{1}{2}-\delta)({\mathcal{M}_\mu^y}\varepsilon,\varepsilon)\leq C(\delta,A)\mu^2,
\end{align}
with $C(A,\delta)\lesssim \frac{1}{\delta}(\sqrt{A}(1+\delta)+1)$, where $\delta>0$ is a sufficiently small constant and $A$ is the bootstrap constant.
\end{proposition}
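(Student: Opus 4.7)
The plan is to differentiate the coercive functional $(\mathcal{M}_\mu^y\varepsilon,\varepsilon)$ along the flow, use the equation \eqref{eqn:linQmu} to substitute for $\partial_s\varepsilon$, and then isolate the leading dissipative term $(\mathcal{L}_\mu^y\varepsilon,\mathcal{M}_\mu^y\varepsilon)$ which is controlled by the spectral gap estimate of Corollary \ref{corollary:NT}. Since $\mathcal{M}_\mu^y$ is self-adjoint but depends on $\mu(s)$, the product rule gives
\begin{equation*}
\frac{1}{2}\frac{d}{ds}(\mathcal{M}_\mu^y\varepsilon,\varepsilon)=(\mathcal{M}_\mu^y\varepsilon,\partial_s\varepsilon)+\frac{\mu_s}{2}(\partial_\mu\mathcal{M}_\mu^y\varepsilon,\varepsilon),
\end{equation*}
and substituting \eqref{eqn:linQmu} decomposes the right-hand side into five types of terms: the main linear term $(\mathcal{L}_\mu^y\varepsilon,\mathcal{M}_\mu^y\varepsilon)$, the scaling term $\frac{\mu_s}{2\mu}(\mathcal{M}_\mu^y\varepsilon,\Lambda\varepsilon)$, the correction $(\mathcal{M}_\mu^y\varepsilon,\Theta_\mu(\varepsilon))$, the nonlinear term $(\mathcal{M}_\mu^y\varepsilon,N(\varepsilon))$, the profile-error term $(\mathcal{M}_\mu^y\varepsilon,\tilde E)$, plus the auxiliary $\mu_s$-term arising from the differentiation of $\mathcal{M}_\mu^y$ in $\mu$.

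For the main term, the three orthogonality conditions $(\varepsilon,1)=(\varepsilon,|\cdot|^2)=0$ (from the modulation and mass conservation) together with $(\varepsilon,y_i)=0$ (from the conservation of the center of mass, exactly as noted in the remark after Corollary \ref{corollary:NT}) ensure that $\varepsilon$ is orthogonal to all three directions listed in \eqref{spectralgap2y}. Hence the spectral gap bound yields
\begin{equation*}
(\mathcal{L}_\mu^y\varepsilon,\mathcal{M}_\mu^y\varepsilon)\leq -K_2\mu(\mathcal{M}_\mu^y\varepsilon,\varepsilon).
\end{equation*}
The scaling term is handled via $\mu_s=-2\mu^2+O(\mu^2/|\log\mu|)$ from Lemma \ref{museq}; an integration by parts using $\Lambda\varepsilon=\nabla\cdot(y\varepsilon)$ together with the explicit form $\mathcal{M}_\mu^y\varepsilon=\varepsilon/Q_\mu-\phi_\varepsilon$ produces a contribution of the form $\tfrac12\mu(\mathcal{M}_\mu^y\varepsilon,\varepsilon)$, which is precisely where the $-\tfrac12$ in $K_2-\tfrac12-\delta$ originates; the $\mu_s$-term hitting $\partial_\mu\mathcal{M}_\mu^y\varepsilon=-\varepsilon\,\partial_\mu Q_\mu/Q_\mu^2$ contributes similarly, and is estimated using the explicit expression \eqref{dmuqmu} and the bound $|\phi_{\partial_\mu Q_\mu}-|y|^2/2|\lesssim \langle y\rangle^2$. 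Both are absorbed, up to $\delta$, into the gap from $K_2>2$.

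For the remaining error terms I would estimate them in $L^2_{Q_\mu}$ and then use Cauchy–Schwarz together with the bootstrap bound $\|\varepsilon\|_{L^2_{Q_\mu}}\leq\sqrt{A\mu}$ and the key potential bound $\|\nabla\phi_\varepsilon\|_{L^2}^2=O(\mu)$ from Proposition \ref{proposition:boundonphieps}. The term $\Theta_\mu(\varepsilon)$ carries an extra factor $\tilde\mu\partial_\mu Q_\mu=O(\mu)$ because $\tilde Q_\mu-Q_\mu=-\tilde\mu\partial_\mu Q_\mu$, contributing $O(\mu)\|\varepsilon\|_{L^2_{Q_\mu}}^2=O(\mu^2)$. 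The nonlinear term $(N(\varepsilon),\mathcal{M}_\mu^y\varepsilon)$ is cubic in $\varepsilon$ and, after integration by parts and an $L^2$–$L^\infty$-type splitting using the logarithmic bound on $\phi_\varepsilon$ recalled in Lemma \ref{lemma:Mprop}, is of the order $\mu^{3/2}\|\varepsilon\|_{L^2_{Q_\mu}}\lesssim\mu^2$.

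The main obstacle, and the subtlest point, is the projection of the profile error $\tilde E=E+F$ onto $\mathcal{M}_\mu^y\varepsilon$, since a naive bound on $E$ and $F$ in $L^2_{Q_\mu}$ would only give $O(\mu^{3/2})$ and fail to close the estimate. This is where the algebraic cancellation emphasized at the end of Section 4 becomes crucial: by construction $E=\tilde\mu\Lambda Q_\mu-\tilde\mu^2\nabla\cdot(\partial_\mu Q_\mu\nabla\phi_{\partial_\mu Q_\mu})$, and the orthogonality $(\varepsilon,\mathcal{M}_\mu^y\Lambda Q_\mu)=0$ (via \eqref{Mlambdaqmu} and the two orthogonality conditions) kills the $\tilde\mu\Lambda Q_\mu$ piece exactly, leaving only the quadratic remainder $\tilde\mu^2(\nabla\cdot(\partial_\mu Q_\mu\nabla\phi_{\partial_\mu Q_\mu}),\mathcal{M}_\mu^y\varepsilon)=O(\mu^2)$. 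A similar cancellation in $F=\frac{\mu_s}{2\mu}\Lambda\tilde Q_\mu-\mu_s\partial_\mu\tilde Q_\mu$ was already exploited in the proof of Proposition \ref{proposition:boundonphieps}, and is used here again to get the $O(\mu^2)$ bound. Collecting all contributions and absorbing the $\tfrac12\mu(\mathcal{M}_\mu^y\varepsilon,\varepsilon)$ and $\delta\mu(\mathcal{M}_\mu^y\varepsilon,\varepsilon)$ subleading pieces into the spectral gap term yields the desired differential inequality with $C(\delta,A)\lesssim \delta^{-1}(\sqrt A(1+\delta)+1)$.
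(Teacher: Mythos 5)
Your overall skeleton (differentiate $({\mathcal{M}_\mu^y}\varepsilon,\varepsilon)$, substitute \eqref{eqn:linQmu}, use the spectral gap for the linear term, exploit the cancellation $(\Lambda Q_\mu,{\mathcal{M}_\mu^y}\varepsilon)=0$ for $E$, and use the bootstrap together with Proposition \ref{proposition:boundonphieps}) matches the paper, but two steps as written would not close. First, you convert the whole dissipation into $-K_2\mu({\mathcal{M}_\mu^y}\varepsilon,\varepsilon)$ at the outset and then claim the remaining pairings are directly $O(\mu^2)$ by Cauchy--Schwarz with $\|\varepsilon\|_{L^2_{Q_\mu}}\leq\sqrt{A\mu}$. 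This is not available: since no derivative of $\varepsilon$ is controlled, the terms $(E,{\mathcal{M}_\mu^y}\varepsilon)$, $(\Theta_\mu(\varepsilon),{\mathcal{M}_\mu^y}\varepsilon)$, $(N(\varepsilon),{\mathcal{M}_\mu^y}\varepsilon)$ and $(F,{\mathcal{M}_\mu^y}\varepsilon)$ must be integrated by parts, producing $\nabla{\mathcal{M}_\mu^y}\varepsilon$, which is controlled only by the dissipation $\int Q_\mu|\nabla{\mathcal{M}_\mu^y}\varepsilon|^2dy$ itself and not by $\mu({\mathcal{M}_\mu^y}\varepsilon,\varepsilon)$; e.g. your claim that $\Theta_\mu$ contributes $O(\mu)\|\varepsilon\|^2_{L^2_{Q_\mu}}$ has no proof, and the paper instead gets $\lesssim \mu^{5/2}+\sqrt{\mu}\int Q_\mu|\nabla{\mathcal{M}_\mu^y}\varepsilon|^2dy$. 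So one must \emph{keep} the full dissipation, absorb into it the $\delta$, $\sqrt{\mu}$ and $\mu$ multiples of $\int Q_\mu|\nabla{\mathcal{M}_\mu^y}\varepsilon|^2dy$ generated by $E,\Theta_\mu,N,F$, and only then apply Corollary \ref{corollary:NT} to what remains.

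Second, the nonlinear term is critical, not perturbative, and your one-line bound $\mu^{3/2}\|\varepsilon\|_{L^2_{Q_\mu}}$ via an ``$L^2$--$L^\infty$ splitting of $\phi_\varepsilon$'' does not work: writing $\varepsilon=Q_\mu{\mathcal{M}_\mu^y}\varepsilon+Q_\mu\phi_\varepsilon$, the piece $\int Q_\mu\phi_\varepsilon\nabla\phi_\varepsilon\cdot\nabla{\mathcal{M}_\mu^y}\varepsilon\,dy$ is fine, but the piece $\int Q_\mu{\mathcal{M}_\mu^y}\varepsilon\,\nabla\phi_\varepsilon\cdot\nabla{\mathcal{M}_\mu^y}\varepsilon\,dy$ cannot be handled this way (there is no $L^\infty$ control of $\nabla\phi_\varepsilon$ or of ${\mathcal{M}_\mu^y}\varepsilon$), and it is exactly of borderline size. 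The paper has to introduce the decomposition $\varepsilon=\alpha_\mu\Lambda Q_\mu+\hat{\varepsilon}$ with the orthogonality \eqref{orthcond}, prove $|\alpha_\mu|\lesssim\|\nabla\phi_\varepsilon\|_{L^2}\lesssim\sqrt{\mu}$, use Gagliardo--Nirenberg on $\sqrt{Q_\mu}{\mathcal{M}_\mu^y}\hat{\varepsilon}$, the uniform Hardy and weighted Poincar\'e inequalities of the appendix (Proposition \ref{Proposition:Hardyineq}, Lemma \ref{lemma:PCineq}), Lemma \ref{lemma:poissonbound}, and Proposition \ref{proposition:difbetE_hatE} to return to $\varepsilon$; none of this is optional. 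Relatedly, in the term $I$ you treat the two $\mu_s$-contributions as separately absorbable, but the dangerous quantity $\frac{\mu^2}{2}\int |y|^2\varepsilon^2/Q_\mu\,dy$ produced by $\frac{\mu_s}{2\mu}(\Lambda\varepsilon,{\mathcal{M}_\mu^y}\varepsilon)$ is not controlled by the bootstrap norms; it is cancelled exactly by the good-signed term $-\frac{\mu_s}{2}\bigl(\frac{\partial_\mu Q_\mu\varepsilon}{Q_\mu^2},\varepsilon\bigr)$, and identifying this cancellation (plus the control of $\int|\log\langle y\rangle|^2\varepsilon^2/Q_\mu\,dy$ through the dissipation) is needed to make your absorption claim rigorous.
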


\begin{proof}
We multiply (\ref{eqn:linQmu})  by ${\mathcal{M}_\mu^y}\varepsilon$ and integrate
\begin{align}\label{estimMe}
\frac{1}{2}\frac{d}{ds}({\mathcal{M}_\mu^y}\varepsilon,\varepsilon)&=-\int_{\RR^2}Q_{\mu}\bigl|\nabla{\mathcal{M}_\mu^y}\varepsilon\bigr|^2dy+\underbrace{\frac{\mu_s}{2\mu}(\Lambda\varepsilon,{\mathcal{M}_\mu^y}\varepsilon)-\frac{\mu_s}{2}(\frac{\partial_{\mu} Q_{\mu}\varepsilon}{Q_{\mu}^2},\varepsilon)}_{I}\nonumber\\
&+\underbrace{(E,{\mathcal{M}_\mu^y}\varepsilon)}_{II}
+\underbrace{(\Theta_{\mu},{\mathcal{M}_\mu^y}\varepsilon)}_{III}+\underbrace{(N(\varepsilon),{\mathcal{M}_\mu^y}\varepsilon)}_{IV}+\underbrace{(F,{\mathcal{M}_\mu^y}\varepsilon)}_{V}.\nonumber\\
\end{align}

Now we start to estimate each term of \eqref{estimMe}.

\subsection{Estimation of $I$}
\subsubsection{Estimation of $(\frac{\partial_{\mu} Q_{\mu}\varepsilon}{Q_{\mu}^2},\varepsilon)$}

We will use the notation
$$<y>=\sqrt{1+|y|^2}.$$
Notice first that by Proposition \ref{proposition:approxQmu}  
$$\frac{\partial_\mu Q_\mu}{Q_\mu}=\phi_{\partial_\mu Q_\mu}-\frac{r^2}{2}=-\frac{r^2}{2}+\min(O((\log(r))^2),O(\mu r^2|\log<r>|)).$$
This term will be very helpful since it has the good sign if we can control
$$\int_{\RR^2}|\log(<y>)|^2\frac{\varepsilon^2}{Q_{\mu}}dy.$$
Indeed, we can prove that 
$$\int_{\RR^2}|\log(<y>)|^2\frac{\varepsilon^2}{Q_{\mu}}dy\lesssim A\sqrt{\mu}+\frac{\sqrt{A}}{\sqrt{\mu}}\Big(\int_{\RR^2}Q_\mu|\nabla \mathcal{M}_\mu^y\varepsilon|^2dy\Big)^{\frac{1}{2}}.$$
We first use that $\frac{\varepsilon}{Q_\mu}=\mathcal{M}_\mu^y\varepsilon+\phi_{\varepsilon}$, which implies
\begin{align}
\int_{\RR^2}|\log(<y>)|^2\frac{\varepsilon^2}{Q_{\mu}}dy=\int_{\RR^2}|\log(<y>)|^2\varepsilon\mathcal{M}_\mu^y\varepsilon dy+\int_{\RR^2}|\log(<y>)|^2\varepsilon \phi_{\varepsilon}dy.
\end{align}
Then by  Proposition \ref{proposition:ineqpot} and bootstrap assumption
$$\int_{\RR^2}|\log(<y>)|^2\varepsilon \phi_{\varepsilon}dy\leq \|\phi_{\varepsilon}\|_{L^\infty}\|\varepsilon\|_{L^2_{Q_\mu}}\||\log(<y>)|^2\sqrt{Q_\mu}\|_{L^2}\lesssim A\mu.$$
To bound the other term we use bootstrap assumption, Lemma \ref{lemma:Mprop} and Lemma \ref{lemma:PCineq},
\begin{align}
\int_{\RR^2}|\log(<y>)|^2\varepsilon\mathcal{M}_\mu^y\varepsilon dy&\leq\|\varepsilon\|_{L^2_{Q_\mu}}\Big(\int_{\RR^2}|\log(<y>)|^4Q_\mu|\mathcal{M}_\mu^y\varepsilon|^2dy\Big)^{\frac{1}{2}} \nonumber\\
&\lesssim \|\varepsilon\|_{L^2_{Q_\mu}}\Big(\int_{\RR^2}|y|^2 Q_\mu|\mathcal{M}_\mu^y\varepsilon|^2dy\Big)^{\frac{1}{2}}\nonumber\\
&\lesssim \|\varepsilon\|_{L^2_{Q_\mu}}\Big(\frac{4}{\mu^2}\int_{\RR^2}Q_\mu|\nabla \mathcal{M}_\mu^y\varepsilon|^2dy+\frac{12}{\mu}\int_{\RR^2}Q_\mu|\mathcal{M}_\mu^y\varepsilon|^2dy\Big)^{\frac{1}{2}}\nonumber\\
&\lesssim \frac{\sqrt{A}}{\sqrt{\mu}}\Big(\int_{\RR^2}Q_\mu|\nabla \mathcal{M}_\mu^y\varepsilon|^2dy\Big)^{\frac{1}{2}}+A\sqrt{\mu},
\end{align}
which yields 
$$\int_{\RR^2}|\log(<y>)|^2\frac{\varepsilon^2}{Q_{\mu}}dy\lesssim A\sqrt{\mu}+\frac{\sqrt{A}}{\sqrt{\mu}}\Big(\int_{\RR^2}Q_\mu|\nabla \mathcal{M}_\mu^y\varepsilon|^2 dy\Big)^{\frac{1}{2}}.$$
Hence, if we combine the previous inequality with $\mu_s=-2\mu^2+O(\frac{\mu^2}{|\log\mu|})$
\begin{align}\label{time_der_enrgy}
-\frac{{\mu}_s}{2}(\frac{\partial_{\mu} Q_{\mu}\varepsilon}{Q_{\mu}^2},\varepsilon)=-\frac{\mu^2}{2}\int_{\RR^2}|y|^2\frac{\varepsilon^2}{Q_{\mu}}dy+O(\mu^{\frac{5}{2}}).
\end{align}

\subsubsection{Estimation of $(\Lambda\varepsilon,{\mathcal{M}_\mu^y}\varepsilon)$}

\begin{align}
(\Lambda\varepsilon,{\mathcal{M}_\mu^y}\varepsilon) &= 2({\mathcal{M}_\mu^y}\varepsilon,\varepsilon)+\int_{\RR^2}  y\cdot\nabla \varepsilon {\mathcal{M}_\mu^y}\varepsilon dy \nonumber\\
&= 2({\mathcal{M}_\mu^y}\varepsilon,\varepsilon)+\int_{\RR^2} y\cdot\nabla\varepsilon\frac{\varepsilon}{Q_{\mu}}dy -\int_{\RR^2}y\cdot\nabla\varepsilon\phi_{\varepsilon}dy.
\end{align}

We calculate each term separately
\begin{eqnarray}
\int_{\RR^2} y\cdot\nabla\varepsilon\frac{\varepsilon}{Q_{\mu}}dy &=&\frac{1}{2}\int_{\RR^2}\frac{y\cdot\nabla(\varepsilon^2)}{Q_{\mu}} dy= -\int_{\RR^2}\frac{\varepsilon^2}{Q_{\mu}} dy+\frac{1}{2}\int_{\RR^2}\varepsilon^2\frac{y\cdot\nabla Q_{\mu}}{Q_{\mu}^2} dy.
\end{eqnarray}
In addition,
\begin{eqnarray*}
-\int_{\RR^2}y\cdot\nabla\varepsilon\phi_{\varepsilon}dy&=&2\int_{\RR^2}\varepsilon\phi_{\varepsilon}+ \int_{\RR^2}\varepsilon y\cdot\nabla\phi_{\varepsilon}dy.
\end{eqnarray*}
Since for $\int_{\RR^2}\varepsilon dy=0$ we have that 
$$\int_{\RR^2}\varepsilon\phi_{\varepsilon}=\int_{\RR^2}|\nabla\phi_{\varepsilon}|^2dy,$$
by using $-\Delta\phi_{\varepsilon}=\varepsilon$ and integration by parts we deduce that
\begin{eqnarray*}
-\int_{\RR^2}y\cdot\nabla\varepsilon\phi_{\varepsilon}dy=2\int_{\RR^2}\varepsilon\phi_{\varepsilon}dy+\frac{1}{2}\int_{\RR^2}|\nabla\phi_{\varepsilon}|^2dy=\frac{5}{2}\int_{\RR^2}|\nabla\phi_{\varepsilon}|^2dy.
\end{eqnarray*}
Hence, 
\begin{align}\label{Lamdaeps}
(\Lambda\varepsilon,{\mathcal{M}_\mu^y}\varepsilon)&=2({\mathcal{M}_\mu^y}\varepsilon,\varepsilon)-\int_{\RR^2}\frac{\varepsilon^2}{Q_{\mu}} dy+\frac{1}{2}\int_{\RR^2}\varepsilon^2\frac{y\cdot\nabla Q_{\mu}}{Q_{\mu}^2} dy+\frac{5}{2}\int_{\RR^2}|\nabla\phi_{\varepsilon}|^2dy\nonumber\\ 
&=\int_{\RR^2}\frac{\varepsilon^2}{Q_{\mu}} dy+\frac{1}{2}\int_{\RR^2}\varepsilon^2\frac{y\cdot\nabla Q_{\mu}}{Q_{\mu}^2} dy+\frac{1}{2}
\int_{\RR^2}|\nabla\phi_{\varepsilon}|^2dy.
\end{align}
Since $\frac{y\cdot\nabla Q_{\mu}}{Q_{\mu}}=y\cdot\nabla\phi_{Q_\mu}-\mu|y|^2$ and $y\cdot\nabla\phi_{Q_\mu}>y\cdot\nabla\phi_{Q}>-4$ we get that
\begin{align}
(\Lambda\varepsilon,{\mathcal{M}_\mu^y}\varepsilon)&=\int_{\RR^2}\frac{\varepsilon^2}{Q_{\mu}} dy+\frac{1}{2}
\int_{\RR^2}|\nabla\phi_{\varepsilon}|^2dy-\frac{\mu}{2}\int_{\RR^2}\frac{|y|^2\varepsilon^2}{Q_{\mu}} dy+\frac{1}{2}\int_{\RR^2}\frac{\varepsilon^2y\cdot\nabla\phi_{Q_\mu}}{Q_{\mu}} dy\nonumber\\
&\geq\int_{\RR^2}\frac{\varepsilon^2}{Q_{\mu}} dy+\frac{1}{2}
\int_{\RR^2}|\nabla\phi_{\varepsilon}|^2dy-\frac{\mu}{2}\int_{\RR^2}\frac{|y|^2\varepsilon^2}{Q_{\mu}} dy-2\int_{\RR^2}\frac{\varepsilon^2}{Q_{\mu}} dy\nonumber\\
&\geq -\int_{\RR^2}\frac{\varepsilon^2}{Q_{\mu}} dy+\frac{1}{2}
\int_{\RR^2}|\nabla\phi_{\varepsilon}|^2dy-\frac{\mu}{2}\int_{\RR^2}\frac{|y|^2\varepsilon^2}{Q_{\mu}} dy
\end{align}
By using $\mu_s=-2\mu^2+O\Big(\frac{\mu^2}{|\log \mu|}\Big)$ we obtain that
\begin{align}
\frac{\mu_s}{2\mu}(\Lambda\varepsilon,{\mathcal{M}_\mu^y}\varepsilon)&\leq\mu\int_{\RR^2}\frac{\varepsilon^2}{Q_{\mu}} dy-\frac{\mu}{2}\int_{\RR^2}|\nabla\phi_{\varepsilon}|^2dy+\frac{\mu^2}{2}\int_{\RR^2}\frac{|y|^2\varepsilon^2}{Q_{\mu}} dy\nonumber\\
&\leq \mu({\mathcal{M}_\mu^y} \varepsilon,\varepsilon)+\frac{\mu}{2}\int_{\RR^2}|\nabla\phi_{\varepsilon}|^2dy+\frac{\mu^2}{2}\int_{\RR^2}\frac{|y|^2\varepsilon^2}{Q_{\mu}} dy.
\end{align}
Since $\int_{\RR^2}|\nabla\phi_{\varepsilon}|^2dy\leq C\mu$ it holds
\begin{align}
\frac{\mu_s}{2\mu}(\Lambda\varepsilon,{\mathcal{M}_\mu^y}\varepsilon)&\leq \mu({\mathcal{M}_\mu^y} \varepsilon,\varepsilon)+\frac{\mu^2}{2}\int_{\RR^2}\frac{|y|^2\varepsilon^2}{Q_{\mu}} dy+\frac{C\mu^2}{2},
\end{align}
with $\delta>0$ a sufficiently small constant independent of $\mu$.
The previous inequality and \eqref{time_der_enrgy} imply the following bound on $I$,
\begin{align*}
I&=\frac{\mu_s}{2\mu}(\Lambda\varepsilon,{\mathcal{M}_\mu^y}\varepsilon)-\frac{\mu_s}{2}(\frac{\partial_{\mu} Q_{\mu}\varepsilon}{Q_{\mu}^2},\varepsilon)\leq \mu({\mathcal{M}_\mu^y} \varepsilon,\varepsilon)+\frac{\mu^2}{2}\int_{\RR^2}\frac{|y|^2\varepsilon^2}{Q_{\mu}} dy-\frac{\mu^2}{2}\int_{\RR^2}\frac{|y|^2\varepsilon^2}{Q_{\mu}} dy\\
&+\frac{C\mu^2}{2}+O(\mu^{\frac{5}{2}})
\nonumber\\
&\leq\mu({\mathcal{M}_\mu^y} \varepsilon,\varepsilon)+\frac{\mu^2}{\delta},
\end{align*}
with $\delta>0$ sufficiently small.

\subsection{ Estimation of $II$}

\begin{remark}
First we notice that 
$E=\tilde{\mu}\Lambda Q_{\mu}-\tilde{\mu}^2\nabla\cdot(\partial_{\mu}Q_{\mu}\nabla\phi_{\partial_{\mu} Q_{\mu}})$,
and
$$(\Lambda Q_\mu,{\mathcal{M}_\mu^y}\varepsilon)=({\mathcal{M}_\mu^y}(\Lambda Q_\mu),\varepsilon)=(2-\frac{M}{2\pi})(1,\varepsilon)-\mu(|\cdot|^2,\varepsilon)=0.$$
Thanks to this cancellation, we are able to close our energy estimates.
\end{remark}
Hence, by using$|\nabla\phi_{\partial_\mu Q_\mu}|\lesssim \min\Big(\mu|y||\log|y||,\frac{|\log|y||}{|y|}\Big)$ and $|\partial_\mu Q_\mu|\lesssim |y|^2Q_\mu$ ( these inequalities are from Proposition \ref{proposition:comp2})
\begin{align*}
(E,{\mathcal{M}_\mu^y}\varepsilon)&=-\tilde{\mu}^2(\nabla\cdot(\partial_{\mu}Q_{\mu}\nabla\phi_{\partial_{\mu} Q_{\mu}}),{\mathcal{M}_\mu^y}\varepsilon)\\
&\lesssim\mu^2\Big(\int_{\RR^2}\frac{|\partial_\mu Q_\mu|^2|\nabla\phi_{\partial_\mu Q_\mu}|^2}{Q_\mu}dy\Big)^{\frac{1}{2}}\Big(\int_{\RR^2}Q_\mu|\nabla{\mathcal{M}_\mu^y}\varepsilon|^2dy\Big)^{\frac{1}{2}}\nonumber\\
&\lesssim \mu^2|\log\mu|^{\frac{3}{2}}\Big(\int_{\RR^2}Q_\mu|\nabla{\mathcal{M}_\mu^y}\varepsilon|^2dy\Big)^{\frac{1}{2}}\lesssim \mu\int_{\RR^2}Q_\mu|\nabla{\mathcal{M}_\mu^y}\varepsilon|^2dy+\mu^3|\log\mu|^3.
\end{align*}

\subsection{ Estimation of $III$}
Since,
$$\Theta_\mu(\varepsilon)=-\tilde{\mu}\nabla\cdot[\partial_{\mu}Q_{\mu}\nabla\phi_\varepsilon+\varepsilon\nabla\phi_{\partial_{\mu}Q_{\mu}}],$$
and by $|\nabla\phi_{\partial_\mu Q_\mu}|\lesssim \min\Big(\mu|y||\log|y||,\frac{|\log|y||}{|y|}\Big)$, $|\partial_\mu Q_\mu|\lesssim |y|^2Q_\mu$, \begin{align*}
(\Theta_\mu(\varepsilon),{\mathcal{M}_\mu^y}\varepsilon)&=\tilde{\mu}\Big[\int_{\RR^2}\partial_\mu Q_\mu \nabla\phi_{\varepsilon}\cdot\nabla{\mathcal{M}_\mu^y}\varepsilon dy+\int_{\RR^2}\varepsilon\nabla\phi_{\partial_\mu Q_\mu}\cdot\nabla{\mathcal{M}_\mu^y}\varepsilon dy\Big]\nonumber\\
&\lesssim \mu\Big(\int_{\RR^2}Q_\mu|\nabla{\mathcal{M}_\mu^y}\varepsilon|^2dy\Big)^{\frac{1}{2}}\\
&\times\Big[\Big(\int_{\RR^2}\frac{|\partial_\mu Q_\mu|^2|\nabla\phi_{\varepsilon}|^2}{Q_\mu}dy\Big)^{\frac{1}{2}}+\Big(\int_{\RR^2}\frac{|\nabla\phi_{\partial_\mu Q_\mu}|^2|\varepsilon|^2}{Q_\mu}dy\Big)^{\frac{1}{2}}\Big]\nonumber\\
&\lesssim \mu [\|\nabla\phi_{\varepsilon}\|_{L^2}+\|\varepsilon\|_{L^2_{Q_\mu}}]\Big(\int_{\RR^2}Q_\mu|\nabla{\mathcal{M}_\mu^y}\varepsilon|^2dy\Big)^{\frac{1}{2}}.
\end{align*}
Hence, by the bootstrap assumption $\|\varepsilon\|_{L^2_{Q_\mu}}\leq\sqrt{A\mu}$ and $\|\nabla\phi_{\varepsilon}\|_{L^2}=O(\sqrt{\mu})$ we get
$$ III=(\Theta_\mu(\varepsilon),{\mathcal{M}_\mu^y}\varepsilon)\lesssim \mu^{\frac{5}{2}}+\sqrt{\mu}\int_{\RR^2}Q_\mu|\nabla{\mathcal{M}_\mu^y}\varepsilon|^2dy.$$

\subsection{ Estimation of $IV$}

For the nonlinear term we will use a sobolev inequality not on $\varepsilon$ but on $\mathcal{M}_\mu^y\varepsilon$ because we control  only the gradient of $\mathcal{M}_\mu^y\varepsilon$. To do so we use the identity $\varepsilon=Q_\mu\mathcal{M}_\mu^y\varepsilon+\phi_{\varepsilon}Q_\mu$.
\begin{align*}
(N(\varepsilon),{\mathcal{M}_\mu^y}\varepsilon) &= -\int_{\RR^2}\varepsilon\nabla\phi_\varepsilon\cdot\nabla{\mathcal{M}_\mu^y}\varepsilon dy=-\int_{\RR^2}Q_\mu\mathcal{M}_\mu^y\varepsilon\nabla\phi_\varepsilon\cdot\nabla{\mathcal{M}_\mu^y}\varepsilon dy\\
&-\int_{\RR^2}Q_\mu\phi_{\varepsilon}\nabla\phi_\varepsilon\cdot\nabla{\mathcal{M}_\mu^y}\varepsilon dy.
\end{align*}
To estimate $\int_{\RR^2}Q_\mu\phi_{\varepsilon}\nabla\phi_\varepsilon\cdot\nabla{\mathcal{M}_\mu^y}\varepsilon dy$ we use Proposition \ref{proposition:ineqpot} and $\sup_{y\in\RR^2}Q_\mu=8$,
\begin{align}
\int_{\RR^2}Q_\mu\phi_{\varepsilon}\nabla\phi_\varepsilon\cdot\nabla{\mathcal{M}_\mu^y}\varepsilon dy&\leq \|\phi_{\varepsilon}\|_{L^\infty}\|\sqrt{Q_\mu}\nabla\phi_{\varepsilon}\|_{L^2}\Big(\int_{\RR^2}Q_\mu|\nabla{\mathcal{M}_\mu^y}\varepsilon|^2dy\Big)^{\frac{1}{2}}.
\end{align}
By Morrey inequality, $-\Delta\phi_{\varepsilon}=\varepsilon$, and interpolation we get 
\begin{align}
\|\phi_{\varepsilon}\|_{L^\infty}&\lesssim \|\phi_{\varepsilon}\|_{L^4}+\|\nabla\phi_{\varepsilon}\|_{L^4}\lesssim \|\nabla\phi_{\varepsilon}\|_{L^2}^{\frac{1}{2}}(\|\Delta\phi_{\varepsilon}\|_{L^2}^{\frac{1}{2}}+\|\phi_{\varepsilon}\|_{L^2}^{\frac{1}{2}})\nonumber\\
&\lesssim \|\nabla\phi_{\varepsilon}\|_{L^2}^{\frac{1}{2}}(\|\varepsilon\|_{L^2_{Q_\mu}}^{\frac{1}{2}}+\|\phi_{\varepsilon}\|_{L^2}^{\frac{1}{2}}).
\end{align}
To control $\|\phi_{\varepsilon}\|_{L^2}$ we use Lemma \ref{lemma:poissonbound}
\begin{align}\label{phiL2}
\|\phi_{\varepsilon}\|_{L^2}^{\frac{1}{2}}\lesssim\|\varepsilon\|_{L^2_{Q_\mu}}^{\frac{1}{2}}.
\end{align}
Hence, by using the bootstrap assumption $\|\varepsilon\|_{L^2_{Q_\mu}}\leq\sqrt{A\mu}$
\begin{align}
\int_{\RR^2}Q_\mu\phi_{\varepsilon}\nabla\phi_\varepsilon\cdot\nabla{\mathcal{M}_\mu^y}\varepsilon dy&\lesssim \|\nabla\phi_{\varepsilon}\|_{L^2}^{\frac{3}{2}}\|\varepsilon\|_{L^2_{Q_\mu}}^{\frac{1}{2}}\Big(\int_{\RR^2}Q_\mu|\nabla{\mathcal{M}_\mu^y}\varepsilon|^2dy\Big)^{\frac{1}{2}}\nonumber\\
&\lesssim A^{\frac{1}{4}}\mu\Big(\int_{\RR^2}Q_\mu|\nabla{\mathcal{M}_\mu^y}\varepsilon|^2dy\Big)^{\frac{1}{2}}\nonumber\\
&\lesssim \frac{\sqrt{A}}{\delta}\mu^2+\delta\int_{\RR^2}Q_\mu|\nabla{\mathcal{M}_\mu^y}\varepsilon|^2dy,
\end{align}
with $\delta$ a sufficiently small constant.
Since the size of $\int_{\RR^2}Q_\mu\mathcal{M}_\mu^y\varepsilon\nabla\phi_\varepsilon\cdot\nabla{\mathcal{M}_\mu^y}\varepsilon dy$ is critical
we will take out the biggest component of $\varepsilon$ which is in the direction of $\Lambda Q_\mu$.
To do so, we introduce the following decomposition of $\varepsilon$: 
$$\varepsilon=\alpha_\mu\Lambda Q_\mu+\hat{\varepsilon},$$
and we fix $\alpha_\mu$ such that 
\begin{align}\label{orthcond}
 \int_{\RR^2}\phi_{\Lambda Q_\mu}Q_\mu\mathcal{M}_\mu^y\hat{\varepsilon}dy=0.
 \end{align}
This specific choice of the orthogonality condition is crucial here.
Indeed, this choice will ensure that $\alpha_\mu$ has a weak dependence in $A$ (the bootstrap constant) and that the constant $C$ in \eqref{Hardyineq} is uniform with respect to $\mu$.
$$\int_{B}Q_\mu|\mathcal{M}_\mu^y\varepsilon|^2 dy\leq C\int_{\RR^2}Q_\mu|\nabla{\mathcal{M}_\mu^y}\varepsilon|^2dy.$$
First let us prove that
$$|\alpha_\mu|\lesssim \sqrt{\mu}.$$
Recall from \eqref{Mlambdaqmu} that 
$$\phi_{\Lambda Q_\mu}Q_\mu=\Lambda Q_\mu-[(2-\frac{M}{2\pi})-\mu|\cdot|^2]Q_\mu.$$  
On the one hand, using Proposition \ref{proposition:algebraic_identities} and Lemma \ref{lemma:appM_2mom} we have
 \begin{align}
 \int_{\RR^2}\phi_{\Lambda Q_\mu}Q_\mu\mathcal{M}_\mu^y\varepsilon dy&=\alpha_\mu  \int_{\RR^2}\Big[\Lambda Q_\mu-[(2-\frac{M}{2\pi})-\mu|\cdot|^2]Q_\mu\Big]\mathcal{M}_\mu^y\Lambda Q_\mu dy\nonumber\\
 &= \alpha_\mu\int_{\RR^2}\Big[\Lambda Q_\mu-\Big[\Big(2-\frac{M}{2\pi}\Big)-\mu|\cdot|^2\Big]Q_\mu\Big]\Big(2-\frac{M}{2\pi}-\mu|y|^2\Big)dy.
 \end{align}
Since, $(\Lambda Q_\mu,1)=0$ and $(\Lambda Q_\mu,|\cdot|^2)=-2(Q_\mu,|\cdot|^2)=-\frac{M\log\mu}{2\pi}+O(1)$ it follows
 \begin{align}
 int_{\RR^2}\phi_{\Lambda Q_\mu}Q_\mu\mathcal{M}_\mu^y\varepsilon dy&=\alpha_\mu\Big(\Big(2-\frac{M}{2\pi}\Big)^2M+O(\mu|\log\mu|)\Big).
 \end{align}
On the other hand, using that $\int_{\RR^2}\varepsilon dy=\int_{\RR^2}\varepsilon |y|^2 dy=(\mathcal{M}_\mu^y\varepsilon,\Lambda Q_\mu)_{L^2}=0$, we get
\begin{align*}
\int_{\RR^2}\phi_{\Lambda Q_\mu}Q_\mu\mathcal{M}_\mu^y\varepsilon dy&= \int_{\RR^2}\Big[\Lambda Q_\mu-\Big[\Big(2-\frac{M}{2\pi}\Big)-\mu|\cdot|^2\Big]Q_\mu\Big]\mathcal{M}_\mu^y\varepsilon dy\nonumber\\
&=-\int_{\RR^2}\Big[\Big(2-\frac{M}{2\pi}\Big)-\mu|\cdot|^2\Big]Q_\mu \mathcal{M}_\mu^y\varepsilon  dy\\
&=-\int_{\RR^2}\Big[\Big(2-\frac{M}{2\pi}\Big)-\mu|\cdot|^2\Big]Q_\mu\phi_{\varepsilon}.
 \end{align*}
Hence, using that $\varepsilon$ has average zero and the decay of $Q_\mu$, we get that:
\begin{align}\label{alphamu}
|\alpha_\mu|\lesssim \int_{\RR^2}Q_\mu|\phi_\varepsilon |dy\lesssim \|\nabla\phi_{\varepsilon}\|_{L^2}.
\end{align}
Moerever, we prove below in the following proposition that the difference between $({\mathcal{M}_\mu^y}\varepsilon,\varepsilon)$ and $({\mathcal{M}_\mu^y}\hat{\varepsilon},\hat{\varepsilon})$ is of order $\mu^2|\log\mu|$.
\begin{proposition}\label{proposition:difbetE_hatE}
If $(\varepsilon,1)_{L^2}=(\varepsilon,|\cdot|^2)_{L^2}=0$, $\varepsilon=\alpha_\mu\Lambda Q_\mu+\hat{\varepsilon}$ and  $$\int_{\RR^2}\phi_{\Lambda Q_\mu}Q_\mu\mathcal{M}_\mu^y\hat{\varepsilon}dy=0$$ then
\begin{align}
(\mathcal{L}_\mu^y\varepsilon,\mathcal{M}_\mu^y\varepsilon)=(\mathcal{L}_\mu^y\hat{\varepsilon},\mathcal{M}_\mu^y\hat{\varepsilon})+O(\mu^3|\log\mu|),
\end{align}
\begin{align}
({\mathcal{M}_\mu^y}\varepsilon,\varepsilon)=({\mathcal{M}_\mu^y}\hat{\varepsilon},\hat{\varepsilon})+O(\mu^2|\log\mu|).
\end{align}
\end{proposition}
The proof is in the appendix.
We plug the decomposition of $\varepsilon=\alpha_\mu\Lambda Q_\mu+\hat{\varepsilon}$ in 
$$\int_{\RR^2}Q_\mu\mathcal{M}_\mu^y\varepsilon\nabla\phi_\varepsilon\cdot\nabla{\mathcal{M}_\mu^y}\varepsilon dy.$$
It follows:
\begin{align*}
\int_{\RR^2}Q_\mu\mathcal{M}_\mu^y\varepsilon\nabla\phi_\varepsilon\cdot\nabla{\mathcal{M}_\mu^y}\varepsilon dy&=\alpha_\mu\int_{\RR^2}Q_\mu\mathcal{M}_\mu^y(\Lambda Q_\mu)\nabla\phi_\varepsilon\cdot\nabla{\mathcal{M}_\mu^y}\hat{\varepsilon} dy\\
&+\alpha_\mu\int_{\RR^2}Q_\mu\mathcal{M}_\mu^y\hat{\varepsilon}\nabla\phi_\varepsilon\cdot \nabla\mathcal{M}_\mu^y(\Lambda Q_\mu)dy\nonumber\\
&+\int_{\RR^2}Q_\mu\mathcal{M}_\mu^y\hat{\varepsilon}\nabla\phi_\varepsilon\cdot\nabla{\mathcal{M}_\mu^y}\hat{\varepsilon} dy\\
&+\alpha_\mu^2\int_{\RR^2}Q_\mu\mathcal{M}_\mu^y(\Lambda Q_\mu)\nabla\phi_\varepsilon\cdot \nabla\mathcal{M}_\mu^y(\Lambda Q_\mu) dy
\end{align*}
Hence, by using $\mathcal{M}_\mu^y(\Lambda Q_\mu)=2-\frac{M}{2\pi}-\mu|y|^2$, we deduce 
\begin{align*}
&\int_{\RR^2}Q_\mu\mathcal{M}_\mu^y\varepsilon\nabla\phi_\varepsilon\cdot\nabla{\mathcal{M}_\mu^y}\varepsilon dy=\underbrace{\alpha_\mu\int_{\RR^2}Q_\mu\mathcal{M}_\mu^y(\Lambda Q_\mu)\nabla\phi_\varepsilon\cdot\nabla{\mathcal{M}_\mu^y}\hat{\varepsilon} dy}_{{IV}_1}\\
&-\underbrace{2\mu\alpha_\mu\int_{\RR^2}Q_\mu\mathcal{M}_\mu^y\hat{\varepsilon}\nabla\phi_\varepsilon\cdot ydy}_{{IV}_2}
+\underbrace{\int_{\RR^2}Q_\mu\mathcal{M}_\mu^y\hat{\varepsilon}\nabla\phi_\varepsilon\cdot\nabla{\mathcal{M}_\mu^y}\hat{\varepsilon} dy}_{{IV}_3}\\
&-\underbrace{2\mu\alpha_\mu^2\int_{\RR^2}Q_\mu\mathcal{M}_\mu^y(\Lambda Q_\mu)\nabla\phi_\varepsilon\cdot y dy}_{{IV}_4}.
\end{align*}
We first start to estimate the term ${IV}_1$.

\subsubsection{ Estimation of ${IV}_1$}

Since, $\mathcal{M}_\mu^y(\Lambda Q_\mu)=2-\frac{M}{2\pi}-\mu|y|^2$ and by \eqref{alphamu} we obtain
\begin{align*}
{IV}_1&\lesssim|\alpha_\mu|\int_{\RR^2}Q_\mu \nabla\phi_\varepsilon\cdot\nabla{\mathcal{M}_\mu^y}\hat{\varepsilon} dy+|\alpha_\mu|\mu\int_{\RR^2}Q_\mu|y|^2 \nabla\phi_\varepsilon\cdot\nabla{\mathcal{M}_\mu^y}\hat{\varepsilon} dy\nonumber\\
&\lesssim \sqrt{\mu}\|\nabla\phi_\varepsilon\|_{L^2} \Big(\int_{\RR^2}Q_\mu|\nabla{\mathcal{M}_\mu^y}\hat{\varepsilon}|^2dy\Big)^{\frac{1}{2}}\\
&+\mu^{\frac{3}{2}}\|\sqrt{Q_\mu}|y|^2\nabla\phi_\varepsilon\|_{L^2} \Big(\int_{\RR^2}Q_\mu|\nabla{\mathcal{M}_\mu^y}\hat{\varepsilon}|^2dy\Big)^{\frac{1}{2}}.
\end{align*}
Notice that $\sqrt{Q_\mu}|y|^2$ is uniformly bounded and $\|\nabla\phi_\varepsilon\|_{L^2}\lesssim \sqrt{\mu}$, which imply
\begin{align}
{IV}_1&\lesssim \delta\int_{\RR^2}Q_\mu|\nabla{\mathcal{M}_\mu^y}\hat{\varepsilon}|^2dy + \frac{1}{\delta}\mu^2+\frac{1}{\delta}\mu^4
\lesssim \delta\int_{\RR^2}Q_\mu|\nabla{\mathcal{M}_\mu^y}\hat{\varepsilon}|^2dy + \frac{1}{\delta}\mu^2.
\end{align}
And if we use Proposition \ref{proposition:difbetE_hatE} we obtain
\begin{align}
{IV}_1\lesssim \delta\int_{\RR^2}Q_\mu|\nabla{\mathcal{M}_\mu^y}\varepsilon|^2dy + \frac{1}{\delta}\mu^2 + \mu^3|\log\mu|.
\end{align}

\subsubsection{ Estimation of ${IV}_2$}

Now we estimate ${IV}_2$.
We remark that $\sqrt{Q_\mu}|y|$ is uniformly bounded, which implies
\begin{align*}
{IV}_2&\lesssim \mu|\alpha_\mu|\Big(\int_{\RR^2}Q_\mu|{\mathcal{M}_\mu^y}\hat{\varepsilon}|^2dy\Big)^{\frac{1}{2}}\|\sqrt{Q_\mu}|y|\nabla\phi_\varepsilon\|_{L^2}\\
&\lesssim \mu|\alpha_\mu|\Big(\int_{\RR^2}Q_\mu|{\mathcal{M}_\mu^y}\hat{\varepsilon}|^2dy\Big)^{\frac{1}{2}}\|\nabla\phi_\varepsilon\|_{L^2}.
\end{align*}
Notice that 
\begin{align}
\int_{\RR^2}Q_\mu|{\mathcal{M}_\mu^y}\hat{\varepsilon}|^2dy&\lesssim \int_{\RR^2}Q_\mu|{\mathcal{M}_\mu^y}\varepsilon|^2dy+\alpha_\mu^2\int_{\RR^2}Q_\mu|{\mathcal{M}_\mu^y}\Lambda Q_\mu|^2dy\nonumber\\
&\lesssim \int_{\RR^2}Q_\mu|{\mathcal{M}_\mu^y}\varepsilon|^2dy+C\alpha_\mu^2,
\end{align}
which implies
\begin{align}\label{Mmuhatepsilon}
\int_{\RR^2}Q_\mu|{\mathcal{M}_\mu^y}\hat{\varepsilon}|^2dy&\lesssim \int_{\RR^2}Q_\mu|{\mathcal{M}_\mu^y}\varepsilon|^2dy+\mu\lesssim A\mu.
\end{align}
Hence, if we combine all the previous inequalities it follows
\begin{align}
{IV}_2&\lesssim \mu^2A^{\frac{1}{2}}\|\nabla\phi_\varepsilon\|_{L^2}\lesssim\mu^{\frac{5}{2}}A^{\frac{1}{2}}.
\end{align}

\subsubsection{Estimation of ${IV}_4$}
To estimate ${IV}_4$, we use $\mathcal{M}_\mu^y(\Lambda Q_\mu)=2-\frac{M}{2\pi}-\mu|y|^2$:

\begin{align}
{IV}_4&\lesssim \mu\alpha_\mu^2\Big[\int_{\RR^2}Q_\mu|\nabla\phi_{\varepsilon}||y| dy+\mu\int_{\RR^2}Q_\mu|\nabla\phi_{\varepsilon}||y|^3 dy\Big]\nonumber\\
&\lesssim\mu\alpha_\mu^2\|\nabla\phi_{\varepsilon}\|_{L^2}\bigl[\|Q_\mu|y|\|_{L^2}+\|Q_\mu|y|^3\|_{L^2}\bigr]\lesssim\mu^{\frac{5}{2}}.
\end{align}

\subsubsection{ Estimation of ${IV}_3$ }

To control the term ${IV}_3$ we first use Gargliano-Niremberg inequality
$$\|f\|_{L^{4}}\lesssim \|f\|_{L^2}^{\frac{1}{2}}\|\nabla f\|_{L^2}^{\frac{1}{2}},$$
with $f=\sqrt{Q_\mu}\mathcal{M}_\mu^y\hat{\varepsilon}$.
Hence,
\begin{align}
\int_{\RR^2}Q_\mu\mathcal{M}_\mu^y\hat{\varepsilon}\nabla\phi_\varepsilon\cdot\nabla{\mathcal{M}_\mu^y}\hat{\varepsilon} dy&\leq \Big(\int_{\RR^2}Q_\mu|\nabla{\mathcal{M}_\mu^y}\hat{\varepsilon}|^2dy\Big)^{\frac{1}{2}}\Big(\int_{\RR^2}Q_\mu|{\mathcal{M}_\mu^y}\hat{\varepsilon}|^2|\nabla\phi_{\varepsilon}|^2dy\Big)^{\frac{1}{2}}\nonumber\\
&\leq \Big(\int_{\RR^2}Q_\mu|\nabla{\mathcal{M}_\mu^y}\hat{\varepsilon}|^2dy\Big)^{\frac{1}{2}}\|\sqrt{Q_\mu}{\mathcal{M}_\mu^y}\hat{\varepsilon}\|_{L^4}\|\nabla\phi_{\varepsilon}\|_{L^4}\nonumber\\
&\leq \Big(\int_{\RR^2}Q_\mu|\nabla{\mathcal{M}_\mu^y}\hat{\varepsilon}|^2dy\Big)^{\frac{1}{2}}\|\nabla\phi_{\varepsilon}\|_{L^4}\\
&\times\|\sqrt{Q_\mu}{\mathcal{M}_\mu^y}\hat{\varepsilon}\|_{L^2}^{\frac{1}{2}}\|\nabla(\sqrt{Q_\mu}{\mathcal{M}_\mu^y}\hat{\varepsilon})\|_{L^2}^{\frac{1}{2}}.
\end{align}
We also apply Gargliano-Niremberg inequality for $\|\nabla\phi_{\varepsilon}\|_{L^4}$ and use $$-\Delta\phi_{\varepsilon}=\varepsilon, \mbox{ and } L^2_{Q_\mu}\hookrightarrow L^2.$$
\begin{align}\label{estC}
\int_{\RR^2}Q_\mu\mathcal{M}_\mu^y\hat{\varepsilon}\nabla\phi_\varepsilon\cdot\nabla{\mathcal{M}_\mu^y}\hat{\varepsilon} dy&\leq \Big(\int_{\RR^2}Q_\mu|\nabla{\mathcal{M}_\mu^y}\hat{\varepsilon}|^2dy\Big)^{\frac{1}{2}}\|\nabla\phi_{\varepsilon}\|_{L^2}^{\frac{1}{2}}\|\varepsilon\|_{L^2_{Q_\mu}}^{\frac{1}{2
}}\nonumber\\
&\times\|\sqrt{Q_\mu}{\mathcal{M}_\mu^y}\varepsilon\|_{L^2}^{\frac{1}{2}}\|\nabla(\sqrt{Q_\mu}{\mathcal{M}_\mu^y}\hat{\varepsilon})\|_{L^2}^{\frac{1}{2}}.
\end{align}
Now we estimate $\|\nabla(\sqrt{Q_\mu}{\mathcal{M}_\mu^y}\hat{\varepsilon})\|_{L^2}^2$. Since $\frac{|\nabla Q_\mu|^2}{Q_\mu}\lesssim \Big(\mu^2|y|^2+\frac{|y|^2}{(1+|y|^2)^2}\Big)Q_\mu$, we deduce
\begin{align*}
\|\nabla(\sqrt{Q_\mu}{\mathcal{M}_\mu^y}\hat{\varepsilon})\|_{L^2}^2&\lesssim\int_{\RR^2}Q_\mu|\nabla\mathcal{M}_\mu^y\hat{\varepsilon}|^2dy+\int_{\RR^2}\frac{|\nabla Q_\mu|^2}{Q_\mu}|\mathcal{M}_\mu^y\hat{\varepsilon}|^2dy\nonumber\\
&\lesssim\int_{\RR^2}Q_\mu|\nabla\mathcal{M}_\mu^y\hat{\varepsilon}|^2dy+\mu^2\int_{\RR^2}|y|^2Q_\mu|\mathcal{M}_\mu^y\hat{\varepsilon}|^2dy\\
&+\int_{\RR^2}\frac{|y|^2}{(1+|y|^2)^2}Q_\mu |\mathcal{M}_\mu^y\hat{\varepsilon}|^2dy.
\end{align*}
Then to control $\mu^2\int_{\RR^2}|y|^2Q_\mu|\mathcal{M}_\mu^y\hat{\varepsilon}|^2dy$ and $\int_{\RR^2}\frac{|y|^2}{(1+|y|^2)^2}Q_\mu |\mathcal{M}_\mu^y\hat{\varepsilon}|^2dy$, we use Proposition \ref{Proposition:Hardyineq} and Lemma \ref{lemma:PCineq} from the appendix.
Hence,
\begin{align}\label{estC}
\int_{\RR^2}Q_\mu\mathcal{M}_\mu^y\hat{\varepsilon}\nabla\phi_\varepsilon\cdot\nabla{\mathcal{M}_\mu^y}\hat{\varepsilon} dy&\leq \Big(\int_{\RR^2}Q_\mu|\nabla{\mathcal{M}_\mu^y}\hat{\varepsilon}|^2dy\Big)^{\frac{1}{2}}A^{\frac{1}{2}}\mu^{\frac{3}{4}}\Big[\Big(\int_{\RR^2}Q_\mu|\nabla{\mathcal{M}_\mu^y}\hat{\varepsilon}|^2dy\Big)^{\frac{1}{4}}\nonumber\\
&+\mu^{\frac{1}{4}}\Big(\int_{\RR^2}Q_\mu|{\mathcal{M}_\mu^y}\hat{\varepsilon}|^2dy\Big)^{\frac{1}{4}}\Big].
\end{align}
From \eqref{Mmuhatepsilon} and Holder we get
\begin{align}
\int_{\RR^2}Q_\mu\mathcal{M}_\mu^y\hat{\varepsilon}\nabla\phi_\varepsilon\cdot\nabla{\mathcal{M}_\mu^y}\hat{\varepsilon} dy
&\lesssim \delta \int_{\RR^2}Q_\mu|\nabla{\mathcal{M}_\mu^y}\varepsilon|^2dy+\frac{\mu^3 A^2}{\delta}+\mu^3|\log\mu|.
\end{align}
Finally,
$$(N(\varepsilon),{\mathcal{M}_\mu^y}\varepsilon)\lesssim \frac{1+A^{\frac{1}{2}}}{\delta}\mu^2+\delta\int_{\RR^2}Q_\mu|\nabla{\mathcal{M}_\mu^y}\varepsilon|^2dy,$$
with $\delta>0$ a sufficiently small constant.

\subsection{Estimation of $V$}
We notice first 
$$F=\frac{\mu_s}{2\mu}[\Lambda Q_\mu-\tilde{\mu}\Lambda\partial_\mu Q_\mu]-\mu_s[\partial_\mu Q_\mu -\partial_\mu \tilde{\mu} \partial_\mu Q_\mu-\tilde{\mu}\partial_\mu^2 Q_\mu],$$
and since $(\partial_\mu Q_\mu,{\mathcal{M}_\mu^y}\varepsilon)=(\Lambda Q_\mu,{\mathcal{M}_\mu^y}\varepsilon)=0$,
it follows
\begin{align*}
(F,{\mathcal{M}_\mu^y}\varepsilon)&=-\tilde{\mu}\frac{\mu_s}{2\mu}(\Lambda \partial_\mu Q_\mu,{\mathcal{M}_\mu^y}\varepsilon)+\mu_s\tilde{\mu}(\partial_\mu^2 Q_\mu,{\mathcal{M}_\mu^y}\varepsilon)=-\tilde{\mu}\frac{\mu_s}{2\mu}(y\cdot\nabla\partial_\mu Q_\mu,\mathcal{M}_\mu^y\varepsilon)\\
&+\mu_s\tilde{\mu}(\partial_\mu^2 Q_\mu,{\mathcal{M}_\mu^y}\varepsilon)\nonumber\\
&=\tilde{\mu}\frac{\mu_s}{2\mu}(\partial_\mu Q_\mu,\nabla\cdot(y\mathcal{M}_\mu^y\varepsilon))+\mu_s\tilde{\mu}(\partial_\mu^2 Q_\mu,{\mathcal{M}_\mu^y}\varepsilon)=\tilde{\mu}\frac{\mu_s}{2\mu}(\partial_\mu Q_\mu,y\cdot\nabla\mathcal{M}_\mu^y\varepsilon)\\
&+\mu_s\tilde{\mu}(\partial_\mu^2 Q_\mu,{\mathcal{M}_\mu^y}\varepsilon).
\end{align*}
Hence, if we use Proposition \ref{proposition:comp2}, $|\mu_s|\leq2\mu^2$, and $\tilde{\mu}\leq\mu$ it holds
\begin{align*}
(F,{\mathcal{M}_\mu^y}\varepsilon)&\lesssim \mu^3\Big(\int_{\RR^2}Q_\mu| {\mathcal{M}_\mu^y}\varepsilon|^2dy\Big)^{\frac{1}{2}}\Big(\int_{\RR^2}\frac{|\partial_\mu^2 Q_\mu|^2}{Q_\mu}dy\Big)^{\frac{1}{2}}\\
&+\mu^2\Big(\int_{\RR^2}\frac{|\partial_\mu Q_\mu|^2|y|^2}{Q_\mu}dy\Big)^{\frac{1}{2}}\Big(\int_{\RR^2}Q_\mu|\nabla{\mathcal{M}_\mu^y}\varepsilon|^2dy\Big)^{\frac{1}{2}}\nonumber\\
&\lesssim (\sqrt{A}+\frac{1}{\delta})\mu^2+\delta\int_{\RR^2}Q_\mu|\nabla{\mathcal{M}_\mu^y}\varepsilon|^2dy
\end{align*}
Finally, combining all the previous estimates, we obtain
\begin{align}
\frac{1}{2}\frac{d}{ds}({\mathcal{M}_\mu^y}\varepsilon,\varepsilon)+\mu(K_2-1-\delta)({\mathcal{M}_\mu^y}\varepsilon,\varepsilon)\leq C(\delta,A)\mu^2,
\end{align}
with $\delta<<1$ and $C(A,\delta)\lesssim \frac{1}{\delta}(\sqrt{A}(1+\delta)+1)\leq A$ for $A$ sufficiently large, which concludes the energy estimates.
\end{proof}
Now we prove Proposition \ref{proposition:bootstrap},
\begin{proof}[Proof of Proposition \ref{proposition:bootstrap}]
As set before $\mathcal{E}(s)=({\mathcal{M}_\mu^y}\varepsilon,\varepsilon)$,
and now set $$K=K_2-1-\delta>1.$$
If we use that $\Big|\mu(s)-\frac{1}{2s}\Big|\leq\frac{C'}{s|\log s|}$
then,
\begin{align}
\frac{1}{2}\mathcal{E}'(s)+\frac{K}{2s}\mathcal{E}(s)\leq \frac{C(\delta,A)}{4s^2}.
\end{align}  
Hence, 
$$\frac{d}{ds}(s^{K}\mathcal{E}(s))\leq \frac{C(\delta,A)s^{K-2}}{2},$$
if we integrate it follows
$$s^{K}\mathcal{E}(s)\lesssim C(\delta,A)s^{K-1},$$
which concludes the proof, and we see easily that if we pick orriginally $A$ sufficiently large the constant in the bootstrap is getting smaller after each step.
\end{proof}
\section{ Proof of Corollary \ref{corollary:th_on_mass}}
\begin{proof}[Proof of Corollary \ref{corollary:th_on_mass}]
First, let us denote
\begin{align}\label{pmass}
m_u(r,s) =2\pi\int_0^r\tau u(\tau,t)d\tau,
\end{align}
where $r=|x|$. 
Hence, we obtain the following local equation on $m_u$:
\begin{align}\label{mass}
 \pd_t m_u &= \pd_{rr}m_u-\frac{\pd_r m_u}{r}+\frac{m_u\pd_r m_u}{r},\\
 m_u(t=0)&=m_0=\int_0^r \tau u_0(\tau)d\tau\nonumber\\
 m_u(r,t)&\longrightarrow 8\pi \mbox{ as }\quad r\rightarrow+\infty.
\end{align} 
The proof of the Corollary is just based on the fact that for radial data, we can use the equation on the partial mass and the comparison principle for radial solutions (See \cite{N}):
If for all $r\geq0$ \quad $m^2(t=0,r)\leq m^1(t=0,r)$ then for all $t>0$ and $r\geq0$ 
$$m^2(t,r)\leq m^1(t,r).$$

Let us choose a radially symmetric initial data $u_0^1$ satisfying the condition of Theorem \ref{theorem:main}, compactly supported and such that $u_0^1(0)>0$. Consequently, for $a>0$ large enough we can achieve that for all $r\geq0$,
$$ m_0(r)\leq m^1_0(r),$$
where
$$m_0^1(r)=2\pi\int_0^r\tau (u^1_0)_{\frac{1}{a}}(\tau)d\tau.$$
In addition, if we choose 
$$u_0^2=\frac{K}{\mu_0}Q\Big(\frac{x}{\sqrt{\mu_0}}\Big)e^{-\frac{x^2}{2}},$$
where $K>1$ is a constant chosen to insure that the mass of $u_0^2$ is $8\pi$.
Moroever, $u_0^2$ verifies the condition of Theorem \ref{theorem:main},
then, for $a$ large enough one can deduce easily that
$$m_0^2(r)\leq m_0(r),$$
where
$$m_0^2(r)=2\pi\int_0^r\tau (u^2_0)_{a}(\tau)d\tau.$$
Hence by applying the comparison principle it follows that
$$m_{u^2}(t)\leq m_u(t)\leq m_{u^1}(t),$$
where $u^2$ is the solution with initial data $(u^2_0)_{a}$ and
$u^1$ is the solution with initial data $(u^1_0)_{\frac{1}{a}}$.
In other words, because we can use the comparison principle we can bound the partial mass of any radial solution between 2 rescaled solutions for which Theorem \ref{theorem:main} apply.
To prove \eqref{part_mass_rate}, we use that $u^1$ and $u^2$ follow the dynamic of the solutions of Theorem \ref{theorem:main}.
Indeed, we can decompose $u^1$ and $u^2$ as in \eqref{decomp_u}, but to be more consistent we use the decomposition \eqref{decomp_v}:
$$u^i(x,t)=\frac{1}{\lambda_i^2}\tilde{Q}_{\mu_i}\Big(\frac{x}{\lambda_i},t\Big)+\frac{1}{\lambda_i^2}\varepsilon_i\Big(\frac{x}{\lambda_i},t\Big)\quad \mbox{ for } i\in\{1,2\},$$
where $\tilde{Q}_{\mu_i}$ is the approximate profile constructed in section 4, and $\mu_i$ is a function of $t$ fixed in section 5. 
Actually, with this decomposition we have 
$$\int_{\RR^2}\varepsilon_i dy=0.$$
Consequently,
\begin{align}
\int_0^{\frac{r}{\lambda_i}}\varepsilon_i(r')r' dr'=-\int_{\frac{r}{\lambda_i}}^\infty\varepsilon_i(r')r' dr'.
\end{align}
Moreover, we use that 
$$\tilde{Q}_{\mu_i}=Q_{\mu_i}-\tilde{\mu}_i\partial_{\mu_i} Q_{\mu_i},$$
where $\tilde{\mu}_i=\mu_i+O\Big(\frac{\mu_i}{|\log\mu_i|}\Big)$.

From Propositions \ref{proposition:approxQmu} and \ref{proposition:comp2}, we deduce that
\begin{align*}
8\pi-m_{\tilde{Q}_ {\mu_i}}&=\int_{\frac{r}{\lambda_i}}^\infty Q_ {\mu_i}(r')r'dr'-\tilde{{\mu_i}}\int_{\frac{r}{\lambda_i}}^\infty\partial_\mu Q_ {\mu_i} r'dr'\nonumber\\
&=\int_{\frac{r}{\lambda_i}}^\infty Q e^{{\mu_i}\phi_{T_1}+\sigma-{\mu_i}\frac{r'^2}{2}}r'dr'\\
&-\tilde{{\mu_i}}\int_{\frac{r}{\lambda_i}}^\infty \Big(\phi_{T_1}+\partial_ {\mu_i}\sigma-\frac{r'^2}{2}\Big)Q e^{{\mu_i}\phi_{T_1}+\sigma-{\mu_i}\frac{r'^2}{2}} r'dr',
\end{align*}
and there exists $0<c<1$, uniform with respect to ${\mu_i} $, such that
$$| {\mu_i}\phi_{T_1}+\sigma|\leq c {\mu_i}\frac{r^2}{2}.$$

Hence, for all $r>0$, we deduce that
\begin{align}
|8\pi-m_{\tilde{Q}_ {\mu_i}}|&\leq \int_{{\frac{r}{\lambda_i}}}^\infty Q e^{-{\mu_i}(1-c)\frac{r'^2}{2}}r'dr'+ {\mu_i}(1+c)\int_{\frac{r}{\lambda_i}}^\infty Q e^{-{\mu_i}(1-c)\frac{r'^2}{2}}r'dr'\nonumber\\
&\leq \frac{1+c}{1-c}\int_{\frac{r}{\lambda_i}}^\infty Q e^{-{\mu_i}(1-c)\frac{r'^2}{2}}r'dr'+ {\mu_i}\frac{(1+c)(1-c)}{1-c}\int_{\frac{r}{\lambda_i}}^\infty Q e^{-{\mu_i}(1-c)\frac{r'^2}{2}}r'dr'\nonumber\\
&\leq \frac{8(1+c)}{1-c}\Big[ \int_{\frac{r}{\lambda_i}}^\infty \frac{1}{r'^3} e^{-{\mu_i}(1-c)\frac{r'^2}{2}}dr'+\frac{{\mu_i}(1-c)}{2}\int_{\frac{r}{\lambda_i}}^\infty \frac{1}{r'} e^{-{\mu_i}(1-c)\frac{r'^2}{2}}dr'\Big].
\end{align}

By integration by parts we obtain that,

$$\frac{{\mu_i}(1-c)}{2}\int_{\frac{r}{\lambda_i}}^\infty \frac{1}{r'} e^{-{\mu_i}(1-c)\frac{r'^2}{2}}dr'=\frac{\lambda_i^2e^{-{\mu_i}(1-c)\frac{r^2}{2}}}{2r^2}-\int_{\frac{r}{\lambda_i}}^\infty \frac{1}{r'^3} e^{-{\mu_i}(1-c)\frac{r'^2}{2}}dr'.$$

Consequently, we get for all $r>0$ that

\begin{align}
|8\pi-m_{\tilde{Q}_{\mu_i}}|&\lesssim \frac{\lambda_i^2e^{-{\mu_i}(1-c)\frac{r^2}{2\lambda_i}}}{r^2}.
\end{align}

From Lemma \ref{lemma:mu_law} and Theorem \ref{theorem:main} we get that:
$$\mu_i(t)=\frac{\sqrt{I_i}}{t\log(2t+1)+O(t\log\log(2t+1))},$$
and
$$\lambda_i(t)=R(t)\mu_i(t), \mbox{ with } R(t)=\sqrt{2t+1}.$$

Hence,
\begin{align}
|8\pi-m_{\tilde{Q}_{\mu_i}}|&\lesssim \frac{\lambda_i^2 e^{-C_1\frac{r^2}{t}}}{r^2}.
\end{align}

Since, $\int_{\RR^2}\varepsilon dy=0$, we deduce that
\begin{align}
\int_{\frac{r}{\lambda_i}}^\infty\varepsilon_i(r')r' dr'&\lesssim\Big(\int_{\frac{r}{\lambda_i}}^\infty Q_{\mu_i}(r')dr'\Big)^{\frac{1}{2}}\|\varepsilon_i\|_{L^2_{Q_{\mu_i}}}\lesssim\Big(\int_{\frac{r}{\lambda_i}}^\infty Q(r')dr'\Big)^{\frac{1}{2}}\|\varepsilon_i\|_{L^2_{Q_{\mu_i}}}\nonumber\\
&\lesssim\frac{\lambda_i^2}{\lambda_i^2+r^2}\|\varepsilon_i\|_{L^2_{Q_\mu}}\lesssim \frac{\lambda_i^2}{(\lambda_i^2+r^2)t|\log t|}.
\end{align}
Consequently,
\begin{align*}
8\pi-m_{u^2}(r,t)&=8\pi-m_{\tilde{Q}_{\mu_2}}-m_{\varepsilon_2}\lesssim\frac{\lambda_2^2e^{-C_2\frac{r^2}{2}}}{r^2}+m_{\varepsilon_2}\Big(\frac{r}{\lambda_2},t\Big)\\
&\lesssim\frac{\lambda_2^2e^{-C_2\frac{r^2}{2}}}{r^2}+\frac{\lambda_2^2}{(\lambda_2^2+r^2)t|\log t|},
\end{align*}
and
$$8\pi-m_{u^1}(r,t)\gtrsim -\frac{\lambda_1^2e^{-C_1\frac{r^2}{2}}}{r^2}-\frac{\lambda_1^2}{(\lambda_1^2+r^2)t|\log t|},$$

with $\lambda_i(t)=\frac{\sqrt{I_i}}{\sqrt{\log(2t+1)+O(\log(\log(2t+1))}}$,
where $I_i$ depends on $a$ and the second moment of $u$.
This concludes the proof.
\end{proof}

\section{Appendix}

\begin{proposition}\label{Proposition:Hardyineq}
Let $f\in H^1(Q_\mu dy)$ be such that $(f,\phi_{\Lambda Q_\mu}Q_\mu)_{L^2}=0$,
then there exists $C$ uniform with respect to $\mu$ such that
\begin{align}\label{Hardyineq}
\int_{\RR^2}|f|^2\frac{|y|^2}{(1+|y|^2)^2}Q_\mu dy\leq C\int_{\RR^2}Q_\mu|\nabla f|^2dy.
\end{align}
\end{proposition}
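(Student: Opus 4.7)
The plan is to exploit the rotational symmetry of the weights by decomposing $f$ in polar coordinates into angular Fourier modes $f(r,\theta) = \sum_{k\in\mathbb{Z}} f_k(r) e^{ik\theta}$, so that the two sides of the inequality decouple mode by mode since $Q_\mu$ and $r^2/(1+r^2)^2$ are radial. For $k\neq 0$, the angular part of the gradient gives $|\nabla(f_k e^{ik\theta})|^2 \geq (k^2/r^2)|f_k|^2$, and combined with the elementary pointwise bound $r^2/(1+r^2)^2 \leq 1/r^2$ this yields the $k$-th mode inequality with constant $1/k^2 \leq 1$ and no orthogonality needed. The entire difficulty is concentrated in the radial mode $k=0$, where a weighted Hardy--Poincar\'e inequality must be established.

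For the radial mode, since $\phi_{\Lambda Q_\mu}$ and $Q_\mu$ are radial, the orthogonality $(f,\phi_{\Lambda Q_\mu}Q_\mu)_{L^2}=0$ becomes a scalar condition on $f_0$. Using the formula $\phi_{\Lambda Q_\mu}Q_\mu = \Lambda Q_\mu - [(2-M/(2\pi))-\mu|y|^2]Q_\mu$ from \eqref{Mlambdaqmu}, the divergence identity $\int \Lambda Q_\mu\,dy = 0$, and the second-moment formula \eqref{2M_Q_mu}, a short computation gives $\int \phi_{\Lambda Q_\mu}Q_\mu\,dy = M^2/(4\pi) \neq 0$, so the orthogonality genuinely removes the constant direction from $f_0$. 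I then aim to prove the one-dimensional weighted inequality
$$\int_0^\infty |f_0|^2 \frac{r^2}{(1+r^2)^2} Q_\mu r\,dr \leq C \int_0^\infty |f_0'|^2 Q_\mu r\,dr$$
with $C$ independent of $\mu\in(0,\bar\mu)$.

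Uniformity in $\mu$ is obtained by contradiction and compactness, exploiting the uniform convergence $Q_\mu \to Q$ provided by Proposition \ref{proposition:comp1}. If the inequality failed uniformly there would exist a sequence $\mu_n\to\mu_\infty\in[0,\bar\mu]$ and radial $f_n$ with $\int |f_n|^2 V Q_{\mu_n} r\,dr = 1$ (where $V = r^2/(1+r^2)^2$), $\int |f_n'|^2 Q_{\mu_n} r\,dr \to 0$, and the orthogonality. Along a subsequence, $f_n$ converges on every compact subset of $(0,\infty)$ to a constant $c$; passing the orthogonality to the limit forces $c\cdot M(\mu_\infty)^2/(4\pi) = 0$, hence $c=0$, which must eventually contradict the normalization. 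The main obstacle will be carrying out this compactness argument cleanly on $\mathbb{R}^2$ with only polynomially decaying weight $Q_\mu$: the standard Rellich--Kondrachov theorem does not apply directly, so one must combine local $H^1$-compactness with a tail estimate based on the simultaneous decay of $V$ and $Q_\mu$ at infinity (and the vanishing of $V$ at the origin) in order to upgrade to the strong convergence in $L^2(V Q_\mu r\,dr)$ required to close the contradiction.
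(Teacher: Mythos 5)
Your reduction to the radial mode is fine (the weight and the orthogonality function are radial, so the $k\neq 0$ modes are handled by $|\nabla(f_ke^{ik\theta})|^2\geq \frac{k^2}{r^2}|f_k|^2$ together with $\frac{r^2}{(1+r^2)^2}\leq \frac{1}{r^2}$), and your nondegeneracy computation $\int_{\RR^2}\phi_{\Lambda Q_\mu}Q_\mu\,dy=\frac{M^2}{4\pi}\neq0$ is correct and makes explicit what the paper only asserts. The genuine gap is exactly the point you flag and then leave open: in your contradiction argument you normalize the \emph{global} weighted quantity $\int |f_n|^2\frac{|y|^2}{(1+|y|^2)^2}Q_{\mu_n}\,dy=1$, and local convergence $f_n\to 0$ in $H^1_{loc}$ plus $\int Q_{\mu_n}|\nabla f_n|^2\to 0$ does not by itself contradict this: the unit of mass can escape to $r=\infty$. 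Closing the argument requires a quantitative tail estimate, uniform in $\mu$, of Hardy type with the weight $Q_\mu$ (whose decay degenerates from Gaussian-corrected to the polynomial decay of $Q$ as $\mu\to0$); this is a nontrivial weighted inequality that you neither state nor prove, so as written the contradiction does not close. It is fixable (a Muckenhoupt-type computation on $(R,\infty)$ with the weights $Q_\mu/r$ and $Q_\mu r$ gives a constant uniform in $\mu$, plus a trace term $|f_n(R)|$ controlled by the local convergence), but that missing estimate is the heart of the matter, not a routine detail.

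The paper's proof is organized precisely so as to avoid this issue. It first proves the unconditional inequality \eqref{subpoincineq},
\begin{align*}
\int_{\RR^2}|\nabla f|^2Q_\mu\,dy+\int_{B}|f|^2Q_\mu\,dy\geq \frac{3}{4}\int_{\RR^2}|f|^2\frac{|y|^2}{(1+|y|^2)^2}Q_\mu\,dy,
\end{align*}
by expanding $\int_{\RR^2}\bigl|\nabla f-\tfrac12\tfrac{yf}{1+|y|^2}\bigr|^2Q_\mu\,dy\geq0$ and exploiting the sign structure of $y\cdot\nabla Q_\mu$ (via Propositions \ref{proposition:approxQmu} and \ref{proposition:comp2}); this reduces \eqref{Hardyineq} to the \emph{local} Poincar\'e inequality $\int_B Q_\mu|f|^2\leq C\int_{\RR^2}Q_\mu|\nabla f|^2$ under the orthogonality. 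In the subsequent compactness argument the normalization is then $\int_B Q_{\mu_n}|f_n|^2=1$ on the fixed ball $B$, where plain Rellich compactness applies and immediately contradicts $f_\infty=0$; the tail only enters in passing the orthogonality to the limit, where the extra decay $\phi_{\Lambda Q_\mu}=O\bigl(\frac{1}{1+|y|^2}\bigr)$ together with \eqref{subpoincineq} suffices (a step you could also perform in your setup, since your normalization bounds $\int|f_n|^2\frac{|y|^2}{(1+|y|^2)^2}Q_{\mu_n}$). In short: your scheme is workable in principle, but you must either prove the uniform tail Hardy estimate you allude to, or first establish an analogue of \eqref{subpoincineq} so that the normalization in the contradiction argument lives on a compact set.
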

\begin{proof}
We first prove the following inequality,
\begin{align}\label{subpoincineq}
\int_{\RR^2}|\nabla f|^2Q_\mu dy+\int_{B}|f|^2Q_\mu dy\geq \frac{3}{4}\int_{\RR^2}|f|^2\frac{|y|^2}{(1+|y|^2)^2}Q_\mu dy.
\end{align}
Indeed, let $\gamma$ be a positive constant that we will fix later,
\begin{align*}
\int_{\RR^2}\Big|\nabla f-\gamma \frac{y f}{1+|y|^2}\Big|^2Q_\mu dy&=\int_{\RR^2}|\nabla f|^2Q_\mu dy-2\gamma\int_{\RR^2}\frac{f y\cdot\nabla f}{1+|y|^2} Q_\mu dy\\
&+\gamma^2\int_{\RR^2}\frac{|f|^2|y|^2}{(1+|y|^2)^2}Q_\mu dy\geq0.
\end{align*}
By integration by parts, we deduce
\begin{align*}
-2\gamma\int_{\RR^2}\frac{f y\cdot\nabla f}{1+|y|^2} Q_\mu dy&=\gamma\int_{\RR^2}\frac{|f|^2\nabla Q_\mu\cdot y}{1+|y|^2}dy+2\gamma\int_{\RR^2}\frac{|f|^2 }{1+|y|^2} Q_\mu dy\\
&-2\gamma\int_{\RR^2}\frac{|f|^2 |y|^2}{(1+|y|^2)^2} Q_\mu dy.
\end{align*}
Since, 
\begin{align*}
\nabla Q_\mu\cdot y&=(y\cdot\nabla\phi_{Q}+\mu y\cdot\nabla\phi_{T_1}+y\cdot\nabla\sigma-\mu|y|^2)Q_\mu\\
&=(-\frac{4|y|^2}{1+|y|^2}+\mu y\cdot\nabla\phi_{T_1}+y\cdot\nabla\sigma-\mu|y|^2)Q_\mu,
\end{align*}
$$\phi_{T_1}'(r)=O(r^3\log r)\indic_{\{ r\leq 1\}}+O\Big(\frac{\log r}{r}\Big)\indic_{\{ r\geq 1\}},$$
and
$$\partial_r\sigma\lesssim\min\Big(\mu^2r|\log<r>|,\mu\frac{|\log<r>|}{r}\Big),$$
it follows,
\begin{align}
&\gamma\int_{\RR^2}\frac{|f|^2\nabla Q_\mu\cdot y}{1+|y|^2}dy+2\gamma\int_{\RR^2}\frac{|f|^2 }{1+|y|^2} Q_\mu dy\nonumber\\
&=\gamma\int_{\RR^2}\frac{|f|^2}{1+|y|^2}\Big[\frac{2(1-|y|^2)}{1+|y|^2}-\mu\underbrace{\Big(|y|^2-y\cdot\nabla\phi_{T_1}-\frac{y\cdot\nabla\sigma}{\mu}\Big)}_{>0}\Big]Q_\mu dy\leq 2\gamma\int_{B}Q_\mu |f|^2 dy.
\end{align}
Hence, if we combine all the previous inequalities we obtain
\begin{align}
\int_{\RR^2}|\nabla f|^2Q_\mu dy+2\gamma\int_{B}Q_\mu |f|^2 dy\geq(2\gamma-\gamma^2)\int_{\RR^2}\frac{|f|^2|y|^2}{(1+|y|^2)^2}Q_\mu dy,
\end{align}
if we select $\gamma=\frac{1}{2}$ then \eqref{subpoincineq} follows.
Now to show \eqref{Hardyineq} we prove that if $(f,\phi_{\Lambda Q_\mu}Q_\mu)_{L^2}=0$,
then
\begin{align}\label{Poincineq}
\int_{B}Q_\mu|f|^2\leq C\int_{\RR^2}Q_\mu|\nabla f|^2 dy,
\end{align}
with $C$ uniform with respect to $\mu$.
We first by contradiction prove \eqref{Poincineq} for fixed $\mu$.
Assume that \eqref{Poincineq} is false, then there exists $f_n\in H^1(\RR^2,Q_{\mu} dy)$ such that
$$\int_{\RR^2}Q_{\mu}|\nabla f_n|^2 dy\leq \frac{1}{n},$$
$$\int_{B}Q_\mu|f_n|^2 dy=1,$$
and 
$$(f_n,\phi_{\Lambda Q_{\mu}}Q_{\mu})_{L^2}=0.$$
Hence, up to a subsequence $f_n \longrightarrow f_\infty$ in $L^2_{loc}$ where $f_\infty$ is a constant.
In addition, by \eqref{subpoincineq} we deduce
$$\int_{\RR^2}\frac{|f_n|^2|y|^2}{(1+|y|^2)^2}Q_{\mu_n} dy\leq 1.$$
And  since $\phi_{\Lambda_{Q_{\mu}}}(y)=O\bigl(\frac{1}{1+|y|^2}\bigr)$
we can pass to the limit in
\begin{align}
(f_n,\phi_{\Lambda Q_{\mu}}Q_{\mu})_{L^2}=0,
\end{align}
and deduce that
$$(f_\infty,\phi_{\Lambda Q_{\mu}}Q_{\mu})_{L^2}=0.$$
Since $(1,\phi_{\Lambda Q_{\mu}}Q_{\mu})_{L^2}\neq0$, we get $f_\infty=0$.
However,
$$\int_{B}|f_\infty|^2Q_\mu dy=1,$$
which contradicts $f_\infty=0$.
To prove the uniformity of $C$ in \eqref{Poincineq}, we argue by contradiction. Suppose that $C$ is not uniform with respect to $\mu$, then there exist $C_n>0$, $\mu_n>0$, and $f_n\in H^1(\RR^2,Q_{\mu_n} dy)$ such that
$$C_n\rightarrow+\infty,$$
$$\mu_n\rightarrow\mu^*,$$
with $\mu^*\geq0$.
$$\int_{\RR^2}Q_{\mu_n}|\nabla f_n|^2 dy=\frac{1}{C_n},$$
$$\int_{B}Q_{\mu_n}|f_n|^2 dy=1,$$
and 
$$(f_n,\phi_{\Lambda Q_{\mu_n}}Q_{\mu_n})_{L^2}=0.$$
Actually, $\mu^*=0$ is the most difficult case, since when $\mu\rightarrow 0$, $Q_\mu$ looses its exponential decay at infinity.
However, thanks to the fact that $\phi_{\Lambda Q}=O\bigl(\frac{1}{1+|y|^2}\bigr)$
has extrat decay, one can still pass to the limit in $(f_n,\phi_{\Lambda Q_{\mu_n}}Q_{\mu_n})_{L^2}=0$.
Indeed, for $R>0$, we have
\begin{align}
0=(f_n,\phi_{\Lambda Q_{\mu_n}}Q_{\mu_n})_{L^2}&=\int_{B_R} f_n\phi_{\Lambda Q_{\mu_n}}Q_{\mu_n}dy +\int_{\RR^2\setminus B_R}f_n\phi_{\Lambda Q_{\mu_n}}Q_{\mu_n}dy,
\end{align}
and
$$\int_{B_R} f_n\phi_{\Lambda Q_{\mu_n}}Q_{\mu_n}dy\longrightarrow \int_{B_R} f_\infty\phi_{\Lambda Q}Qdy.$$
Moreover, thanks to \eqref{subpoincineq} we get that
\begin{align}
\int_{\RR^2\setminus B_R}f_n\phi_{\Lambda Q_{\mu_n}}Q_{\mu_n}dy&\lesssim \Big(\int_{\RR^2\setminus B_R}\frac{|f_n|^2|y|^2}{(1+|y|^2)^2}Q_{\mu_n} dy\Big)^{\frac{1}{2}}\Big(\int_{\RR^2\setminus B_R}|\phi_{\Lambda Q_{\mu_n}}|^2|y|^2Q_{\mu_n} dy\Big)^{\frac{1}{2}}\nonumber\\
&\lesssim\frac{1}{R^2}\Big(\int_{\RR^2\setminus B_R}\frac{|f_n|^2|y|^2}{(1+|y|^2)^2}Q_{\mu_n} dy\Big)^{\frac{1}{2}}.
\end{align}
Since 
$$\Big(\int_{\RR^2\setminus B_R}\frac{|f_n|^2|y|^2}{(1+|y|^2)^2}Q_{\mu_n} dy\Big)^{\frac{1}{2}},$$
is uniformly bounded with respect to $n$, we deduce that,
$$\int_{\RR^2\setminus B_R}f_n\phi_{\Lambda Q_{\mu_n}}Q_{\mu_n}dy\longrightarrow 0,$$
when $R\rightarrow+\infty$ uniformly in $n$.
For the other terms, one can pass to the limit as it has been done before and reach the same contradiction which concludes the proof.
\end{proof}
\begin{lemma}\label{lemma:PCineq}
Let $f\in H^1(\RR^2,Q_\mu dy)$ then there exists $c'>0$ uniform with respect to $\mu$ such that:
$$\int_{\RR^2}Q_\mu |y|^2|f|^2dy\leq \frac{c'}{\mu^2}\int_{\RR^2}Q_\mu|\nabla f|^2dy+\frac{c'}{\mu}\int_{B_1}Q_\mu|f|^2dy,$$
where $B_1$ is the unit ball in $\RR^2$.
\end{lemma}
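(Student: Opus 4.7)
The plan is to reduce the estimate to a weighted Poincar\'e inequality via an integration-by-parts identity.

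\emph{Step 1 (Integration by parts).} Since $Q_\mu = 8\,e^{\phi_{Q_\mu}-\mu|y|^2/2}$ by Proposition \ref{proposition:algebraic_identities}, one has $\nabla Q_\mu = Q_\mu(\nabla\phi_{Q_\mu}-\mu y)$, whence the pointwise identity
\[
\mu|y|^2 Q_\mu = -y\cdot\nabla Q_\mu + Q_\mu\,y\cdot\nabla\phi_{Q_\mu}.
\]
Multiplying by $|f|^2$, integrating on $\RR^2$, and integrating the first term by parts yields
\[
\mu\int Q_\mu|y|^2 f^2\,dy = 2\int Q_\mu f^2\,dy + 2\int Q_\mu f\,y\cdot\nabla f\,dy + \int Q_\mu(y\cdot\nabla\phi_{Q_\mu})f^2\,dy.
\]
Proposition \ref{proposition:comp1}, together with the explicit formula $y\cdot\nabla\phi_Q(y) = -4|y|^2/(1+|y|^2)\in[-4,0]$, gives $-4 < y\cdot\nabla\phi_{Q_\mu}(y)\le 0$. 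Using this pointwise bound together with the Cauchy--Schwarz estimate $2|\int Q_\mu f\,y\cdot\nabla f\,dy|\le(\mu/2)\int Q_\mu|y|^2 f^2\,dy + (2/\mu)\int Q_\mu|\nabla f|^2\,dy$, and absorbing the $|y|^2$-term, one obtains
\[
\int Q_\mu|y|^2 f^2\,dy \le \frac{4}{\mu}\int Q_\mu f^2\,dy + \frac{4}{\mu^2}\int Q_\mu|\nabla f|^2\,dy.
\]

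\emph{Step 2 (Pinning Poincar\'e).} The plan is then to establish the uniform-in-$\mu$ weighted Poincar\'e inequality
\[
\int Q_\mu f^2\,dy \le C\int_{B_1} Q_\mu f^2\,dy + \frac{C}{\mu}\int Q_\mu|\nabla f|^2\,dy.
\]
The underlying reason this holds is twofold. First, the probability measure $Q_\mu\,dy/M$ puts a uniformly positive fraction of its mass on $B_1$: indeed $\int_{B_1}Q_\mu\,dy\to \int_{B_1}Q\,dy = 4\pi$ and $M\to 8\pi$ as $\mu\to 0$ (Lemma \ref{lemma:appM_2mom}), so pinning $f$ through $\int_{B_1}Q_\mu f^2$ controls the $Q_\mu$-weighted mean of $f$ up to a universal constant. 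Second, the spectral gap of the weighted Laplacian $-Q_\mu^{-1}\nabla\cdot(Q_\mu\nabla\cdot)$ on $L^2(Q_\mu\,dy)\ominus\{\mathrm{constants}\}$ is of order $\mu$, reflecting the Gaussian-like decay $Q_\mu\le 8\,e^{-\mu|y|^2/2}$ at infinity implied by $\phi_{Q_\mu}\le 0$. Combined with the mean control from the pinning, these yield the inequality above. This uniform spectral-gap lower bound would be proved by a compactness-and-contradiction argument in the spirit of Proposition \ref{Proposition:Hardyineq}: if it failed one would extract sequences $(\mu_n,f_n)$ normalized in $L^2(Q_{\mu_n}\,dy)$ with vanishing gradient and pinning contributions, which is ruled out by standard Rellich-type compactness when $\mu_n\to\mu^*>0$, and when $\mu_n\to 0$ by the rescaling $z=\sqrt{\mu_n}\,y$ together with the concentration of $n_\infty^{M(\mu_n)}$ at the origin.

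\emph{Step 3 (Combine).} Inserting the pinning Poincar\'e into the IBP estimate of Step 1 gives
\[
\int Q_\mu|y|^2 f^2\,dy \le \frac{4C}{\mu}\int_{B_1}Q_\mu f^2\,dy + \frac{4(C+1)}{\mu^2}\int Q_\mu|\nabla f|^2\,dy,
\]
which is the claimed inequality with $c'=4(C+1)$.

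The main obstacle is Step 2: Bakry--\'Emery does not apply uniformly in $\mu$ since the radial eigenvalue of $\mathrm{Hess}(-\log Q_\mu)$ is negative of order one near $|y|\sim\sqrt{3}$, so one needs a softer (compactness or direct trace-plus-Hardy along rays) argument; and in the regime $\mu_n\to 0$ the compactness must exploit the Gaussian tail of $Q_\mu$ to prevent mass escape to infinity, which is where the integrability of $\phi_{\Lambda Q_\mu}Q_\mu$ used in Proposition \ref{Proposition:Hardyineq} is the guiding template.
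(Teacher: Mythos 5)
Your Step 1 is correct, and it is in fact the same integration-by-parts identity the paper uses; but at the crucial moment you throw away the drift term by bounding $y\cdot\nabla\phi_{Q_\mu}\le 0$, and this leaves you with the \emph{global} zeroth-order term $\int_{\RR^2}Q_\mu f^2\,dy$, which forces Step 2. Step 2 is where the proposal has a genuine gap: the ``pinning Poincar\'e'' inequality $\int Q_\mu f^2\le C\int_{B_1}Q_\mu f^2+\frac{C}{\mu}\int Q_\mu|\nabla f|^2$ with $C$ uniform in $\mu$ is essentially the whole content of the lemma, and you do not prove it — you assert a ``spectral gap of order $\mu$'' (which is precisely the statement to be proved; Bakry--\'Emery indeed fails, as you note) and defer to a compactness-and-contradiction sketch. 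That sketch is not routine in the only delicate regime, $\mu_n\to 0$: after the rescaling $z=\sqrt{\mu_n}\,y$ the inequality becomes $\int n_\infty g^2\le C\int_{\{|z|\le\sqrt{\mu_n}\}}n_\infty g^2+C\int n_\infty|\nabla g|^2$, where the measures $n_\infty^{M(\mu_n)}dz$ collapse to $8\pi\delta_0$ while the pinning ball shrinks to a point yet carries half the mass; weak limits of normalized sequences do not retain the pinning information, and no Rellich-type argument applies directly. Moreover the inequality is sharp in its $\mu$-scaling, so there is no room to lose: taking $f$ radial with $f=0$ on $B_1$ and $f(r)$ the capacity-minimizing profile up to $r\sim\mu^{-1/2}$, one has $\int Q_\mu|\nabla f|^2\sim\mu^2$ and $\int Q_\mu f^2\sim\mu$, so both sides of your Step 2 inequality are of the same order $\mu$; a soft compactness argument that does not track this two-scale structure cannot conclude. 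As written, Step 2 is therefore an unproven claim of essentially the same depth as the lemma itself.

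The paper's proof shows the detour is unnecessary, and the difference is instructive. It keeps the precise lower bound on the drift, $y\cdot\nabla\phi_{Q_\mu}=-\frac{4|y|^2}{1+|y|^2}+\mu\,y\cdot\nabla\phi_{T_1}+y\cdot\nabla\sigma$ with $y\cdot\nabla\phi_{T_1}+\mu^{-1}y\cdot\nabla\sigma\le c|y|^2$, $0<c<1$ uniform in $\mu$ (Propositions \ref{proposition:approxQmu} and \ref{proposition:comp2}), inside the expansion of $\int Q_\mu|\nabla f-\gamma yf|^2\ge 0$. The zeroth-order term produced by $\nabla\cdot(yQ_\mu)$ then carries the weight $2-\frac{4|y|^2}{1+|y|^2}=\frac{2(1-|y|^2)}{1+|y|^2}$, which is nonpositive outside $B_1$, so only the $B_1$ contribution survives — the localization you are trying to obtain through a Poincar\'e inequality comes for free from the sign of this combination. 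Choosing $\gamma=\frac{(1-c)\mu}{2}$ leaves a positive multiple $\frac{(1-c)^2\mu^2}{4}$ of $\int Q_\mu|y|^2f^2$ on the good side and yields exactly the stated bound with $c'\sim(1-c)^{-2}$. In short: the term you discarded in Step 1 is the one that does the work; restoring it removes the need for Step 2 entirely.
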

\begin{proof}
The proof is based on the following identity with $\gamma$ a positive constant that will be fixed at the end of the proof,
\begin{align}
\int_{\RR^2}|\nabla f-\gamma y f|^2Q_\mu dy=\int_{\RR^2}|\nabla f|^2Q_\mu dy-2\gamma\int_{\RR^2}f y\cdot\nabla f Q_\mu dy+\gamma^2\int_{\RR^2}|f|^2|y|^2Q_\mu dy\geq0.\nonumber
\end{align}
By integrating by parts we get
$$-2\gamma\int_{\RR^2}f y\cdot\nabla f Q_\mu dy=\gamma\int_{\RR^2}|f|^2\nabla\cdot( y Q_\mu) dy=2\gamma\int_{\RR^2}|f|^2 Q_\mu dy+\gamma\int_{\RR^2}|f|^2 y\cdot\nabla Q_\mu dy.$$
Hence,
\begin{align}
\int_{\RR^2}|\nabla f|^2Q_\mu dy\geq -\gamma^2\int_{\RR^2}|f|^2|y|^2Q_\mu dy-2\gamma\int_{\RR^2}|f|^2 Q_\mu dy-\gamma\int_{\RR^2}|f|^2 y\cdot\nabla Q_\mu dy.
\end{align}
 From Proposition \ref{proposition:comp2} there exists $0<c<1$ uniform with respect to $\mu$ such that 
$$y\cdot\nabla\phi_{T_1}+\frac{y\cdot\nabla\sigma}{\mu}\leq c|y|^2.$$ 

Since $y\cdot\nabla Q_\mu=y\cdot\nabla\phi_{Q_\mu}-\mu|y|^2=-\frac{4|y|^2}{1+|y|^2}-\mu(|y|^2-y\cdot\nabla\phi_{T_1}-\frac{y\cdot\nabla\sigma}{\mu})$

we get $$\int_{\RR^2}|\nabla f|^2Q_\mu dy\geq -\gamma(\gamma-(1-c)\mu)\int_{\RR^2}|f|^2|y|^2Q_\mu dy-2\gamma\int_{\RR^2}\frac{1-|y|^2}{1+|y|^2}|f|^2Q_\mu dy.$$

If we choose $\gamma=\frac{(1-c)\mu}{2}$ the conclusion follows.
\end{proof}
\begin{proposition}\label{proposition:difbetE_hatE}
If $(\varepsilon,1)_{L^2}=(\varepsilon,|\cdot|^2)_{L^2}=0$, $\varepsilon=\alpha_\mu\Lambda Q_\mu+\hat{\varepsilon}$ and  $\int_{\RR^2}Q_\mu\mathcal{M}_\mu^y\hat{\varepsilon}dy=0$ then
\begin{align}
(\mathcal{L}_\mu^y\varepsilon,\mathcal{M}_\mu^y\varepsilon)=(\mathcal{L}_\mu^y\hat{\varepsilon},\mathcal{M}_\mu^y\hat{\varepsilon})+O(\mu^3|\log\mu|),
\end{align}
\begin{align}
({\mathcal{M}_\mu^y}\varepsilon,\varepsilon)=({\mathcal{M}_\mu^y}\hat{\varepsilon},\hat{\varepsilon})+O(\mu^2|\log\mu|).
\end{align}
\end{proposition}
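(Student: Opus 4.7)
The plan is to prove both identities by direct algebraic expansion using the decomposition $\varepsilon=\alpha_\mu\Lambda Q_\mu+\hat\varepsilon$ together with the eigenvalue relations of Proposition \ref{proposition:algebraic_identities}. Both estimates reduce to controlling the same two scalar products $(\mathcal{M}_\mu^y\Lambda Q_\mu,\Lambda Q_\mu)$ and $(\mathcal{M}_\mu^y\Lambda Q_\mu,\hat\varepsilon)$: the first depends only on $Q_\mu$, while the second is linear in $\hat\varepsilon$ and is controllable thanks to the orthogonality conditions on $\varepsilon$.

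For the second identity I would expand using bilinearity and the self-adjointness of $\mathcal{M}_\mu^y$ on $L^2_{Q_\mu}$ recalled in Lemma \ref{lemma:Mprop}, obtaining
\begin{align*}
(\mathcal{M}_\mu^y\varepsilon,\varepsilon)-(\mathcal{M}_\mu^y\hat\varepsilon,\hat\varepsilon)=\alpha_\mu^2(\mathcal{M}_\mu^y\Lambda Q_\mu,\Lambda Q_\mu)+2\alpha_\mu(\mathcal{M}_\mu^y\Lambda Q_\mu,\hat\varepsilon).
\end{align*}
Plugging in $\mathcal{M}_\mu^y\Lambda Q_\mu=2-\frac{M}{2\pi}-\mu|y|^2$ and using $(\Lambda Q_\mu,1)=0$ and $(\Lambda Q_\mu,|y|^2)=-2(Q_\mu,|y|^2)$ (both from integration by parts), I get $(\mathcal{M}_\mu^y\Lambda Q_\mu,\Lambda Q_\mu)=2\mu(Q_\mu,|y|^2)$. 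The hypotheses $(\varepsilon,1)=(\varepsilon,|\cdot|^2)=0$ translate into $(\hat\varepsilon,1)=0$ and $(\hat\varepsilon,|y|^2)=2\alpha_\mu(Q_\mu,|y|^2)$, giving $(\mathcal{M}_\mu^y\Lambda Q_\mu,\hat\varepsilon)=-2\mu\alpha_\mu(Q_\mu,|y|^2)$. Two size inputs established earlier finish the job: the moment asymptotic $(Q_\mu,|y|^2)=O(|\log\mu|)$ from \eqref{2M_Q_mu} and \eqref{approxM_Q_mu}, and $|\alpha_\mu|\lesssim\sqrt\mu$ from \eqref{alphamu} together with Proposition \ref{proposition:boundonphieps}. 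Each of the two contributions is then of order $\mu^2|\log\mu|$, as claimed.

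For the first identity I use in addition the eigenrelation $\mathcal{L}_\mu^y\Lambda Q_\mu=-2\mu\Lambda Q_\mu$ of Proposition \ref{proposition:algebraic_identities} and the symmetry of $\mathcal{L}_\mu^y$ with respect to the bilinear pairing $(\mathcal{L}_\mu^y\cdot,\mathcal{M}_\mu^y\cdot)$, which is immediate from the representation $\mathcal{L}_\mu^y u=\nabla\cdot(Q_\mu\nabla\mathcal{M}_\mu^y u)$. The same expansion then gives
\begin{align*}
(\mathcal{L}_\mu^y\varepsilon,\mathcal{M}_\mu^y\varepsilon)-(\mathcal{L}_\mu^y\hat\varepsilon,\mathcal{M}_\mu^y\hat\varepsilon)=-2\mu\alpha_\mu^2(\mathcal{M}_\mu^y\Lambda Q_\mu,\Lambda Q_\mu)-4\mu\alpha_\mu(\mathcal{M}_\mu^y\Lambda Q_\mu,\hat\varepsilon).
\end{align*}
Reusing the two scalar products already computed, the extra factor of $\mu$ upgrades each contribution to $O(\mu^3|\log\mu|)$, yielding the first estimate.

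The main obstacle is essentially bookkeeping: there is no genuine analytic difficulty, but one must track signs and cancellations carefully and verify that the $\hat\varepsilon$ pieces can be rewritten in terms of $(Q_\mu,|y|^2)$ using the zero-mass and zero-second-moment conditions on $\varepsilon$. The additional orthogonality on $\hat\varepsilon$ mentioned in the statement is used only to pin down $\alpha_\mu$ in the decomposition, and does not actively enter these two estimates beyond providing the a priori bound $|\alpha_\mu|\lesssim\sqrt\mu$.
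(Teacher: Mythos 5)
Your proposal is correct and follows essentially the same route as the paper's proof: both rest on the identities $\mathcal{M}_\mu^y(\Lambda Q_\mu)=2-\frac{M}{2\pi}-\mu|y|^2$ and $\mathcal{L}_\mu^y(\Lambda Q_\mu)=-2\mu\Lambda Q_\mu$, the zero mass and zero second moment of $\varepsilon$ to reduce everything to $(Q_\mu,|\cdot|^2)=O(|\log\mu|)$, and the bound $|\alpha_\mu|\lesssim\sqrt{\mu}$ from \eqref{alphamu} and Proposition \ref{proposition:boundonphieps}. The only difference is presentational (a symmetric bilinear expansion versus the paper's sequential substitution using $(\mathcal{M}_\mu^y\Lambda Q_\mu,\varepsilon)=0$), and your cross terms match the paper's after this cancellation.
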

\begin{proof}
The proof is just a consequence of the following identities $\mathcal{M}_\mu^y(\Lambda Q_\mu)=2-\frac{M}{2\pi}-\mu|y|^2$ and $\mathcal{L}_\mu^y(\Lambda Q_\mu)=-2\mu\Lambda Q_\mu$ and the orhtogonality conditions on $\varepsilon$.
Indeed, if we replace  $\varepsilon$ in $({\mathcal{M}_\mu^y}\varepsilon,\varepsilon)$ by $\alpha_\mu\Lambda Q_\mu+\hat{\varepsilon}$, it follows
\begin{align}
({\mathcal{M}_\mu^y}\varepsilon,\varepsilon)&= ({\mathcal{M}_\mu^y}\hat{\varepsilon},\varepsilon)+\alpha_\mu({\mathcal{M}_\mu^y}\Lambda Q_\mu,\varepsilon)=({\mathcal{M}_\mu^y}\hat{\varepsilon},\varepsilon)=({\mathcal{M}_\mu^y}\hat{\varepsilon},\hat{\varepsilon})+\alpha_\mu({\mathcal{M}_\mu^y}\hat{\varepsilon},\Lambda Q_\mu)\nonumber\\
&=({\mathcal{M}_\mu^y}\hat{\varepsilon},\hat{\varepsilon})+\alpha_\mu(\hat{\varepsilon},2-\frac{M}{2\pi}-\mu|\cdot|^2).
\end{align}
Since $\int_{\RR^2}\varepsilon dy=\int_{\RR^2}\Lambda Q_\mu dy=0$ it implies
$$\int_{\RR^2}\hat{\varepsilon} dy=0.$$
Hence,
\begin{align}
({\mathcal{M}_\mu^y}\varepsilon,\varepsilon)&=({\mathcal{M}_\mu^y}\hat{\varepsilon},\hat{\varepsilon})-\alpha_\mu\mu(\hat{\varepsilon},|\cdot|^2).
\end{align}
Moreover, since $\int_{\RR^2}\varepsilon |y|^2 dy=0$ it follows
$$\int_{\RR^2}\hat{\varepsilon}|y|^2 dy=-\alpha_\mu(\Lambda Q_\mu,|\cdot|^2)=2\alpha_\mu(Q_\mu,|\cdot|^2)=\alpha_\mu\frac{M}{\pi}|\log\mu|+O(1).$$
Then using that $\alpha_\mu\lesssim \sqrt{\mu}$, we deduce
\begin{align}
({\mathcal{M}_\mu^y}\varepsilon,\varepsilon)&=({\mathcal{M}_\mu^y}\hat{\varepsilon},\hat{\varepsilon})+O(\mu^2|\log\mu|).
\end{align}
Now we prove the second identity and we replace $\varepsilon$ in $(\mathcal{L}_\mu^y\varepsilon,\mathcal{M}_\mu^y\varepsilon)$ by $\hat{\varepsilon}+\alpha_\mu\Lambda Q_\mu$:
\begin{align*}
(\mathcal{L}_\mu^y\varepsilon,\mathcal{M}_\mu^y\varepsilon)&=(\mathcal{L}_\mu^y\hat{\varepsilon},\mathcal{M}_\mu^y\varepsilon)+\alpha_\mu(\mathcal{L}_\mu^y\Lambda Q_\mu,\mathcal{M}_\mu^y\varepsilon)=(\mathcal{L}_\mu^y\hat{\varepsilon},\mathcal{M}_\mu^y\varepsilon)-2\mu\alpha_\mu(\Lambda Q_\mu,\mathcal{M}_\mu^y\varepsilon)\nonumber\\
&=(\mathcal{L}_\mu^y\hat{\varepsilon},\mathcal{M}_\mu^y\varepsilon)-2\mu\alpha_\mu(2-\frac{M}{2\pi}-\mu|\cdot|^2,\varepsilon)=(\mathcal{L}_\mu^y\hat{\varepsilon},\mathcal{M}_\mu^y\varepsilon)\\
&=(\mathcal{L}_\mu^y\hat{\varepsilon},\mathcal{M}_\mu^y\hat{\varepsilon})+\alpha_\mu(\mathcal{L}_\mu^y\hat{\varepsilon},\mathcal{M}_\mu^y\Lambda Q_\mu)\nonumber\\ &=(\mathcal{L}_\mu^y\hat{\varepsilon},\mathcal{M}_\mu^y\hat{\varepsilon})+\alpha_\mu(\mathcal{M}_\mu^y\hat{\varepsilon},\mathcal{L}_\mu^y\Lambda Q_\mu)=(\mathcal{L}_\mu^y\hat{\varepsilon},\mathcal{M}_\mu^y\hat{\varepsilon})-2\mu\alpha_\mu(\mathcal{M}_\mu^y\hat{\varepsilon},\Lambda Q_\mu)\nonumber\\
&=(\mathcal{L}_\mu^y\hat{\varepsilon},\mathcal{M}_\mu^y\hat{\varepsilon})+\alpha_\mu(\mathcal{M}_\mu^y\hat{\varepsilon},\mathcal{L}_\mu^y\Lambda Q_\mu)=(\mathcal{L}_\mu^y\hat{\varepsilon},\mathcal{M}_\mu^y\hat{\varepsilon})-2\mu\alpha_\mu(\hat{\varepsilon},2-\frac{M}{2\pi}-\mu|\cdot|^2).
\end{align*}
As before we use that $\int_{\RR^2}\hat{\varepsilon} dy=0$, $\int_{\RR^2}\hat{\varepsilon}|y|^2 dy=\alpha_\mu\frac{M}{\pi}|\log\mu|+O(1)$ and $\alpha_\mu\lesssim \sqrt{\mu}$ which imply 
\begin{align}
(\mathcal{L}_\mu^y\varepsilon,\mathcal{M}_\mu^y\varepsilon)=(\mathcal{L}_\mu^y\hat{\varepsilon},\mathcal{M}_\mu^y\hat{\varepsilon})+O(\mu^3|\log\mu|).
\end{align}
This concludes the proof.
\end{proof}
The following estimations have been proved already in \cite{RS} for $\varepsilon\in L^2_Q(\RR^2)$ but since $Q_\mu<Q$ we get that $L^2_{Q_\mu}(\RR^2)\hookrightarrow L^2_Q(\RR^2)$ and we deduce easily from their proposition the following one:
\begin{proposition}\label{proposition:ineqpot}
Let $\varepsilon\in L^2_{Q_\mu}(\RR^2)$ then 
$$\|\nabla\phi_{\varepsilon}\|_{L^4}\leq C\|\varepsilon\|_{L^2_{Q_\mu}}.$$
If in addition $\int_{\RR^2}\varepsilon dy=0$, then 
$$\|\phi_{\varepsilon}\|_{L^\infty}\leq C \|\varepsilon\|_{L^2_{Q_\mu}},$$
\begin{equation}\label{gradphi1}
\|\nabla \phi_\varepsilon \|_{L^2}\leq C\| \varepsilon \|_{L^2_{Q_\mu}}.
\end{equation}
\end{proposition}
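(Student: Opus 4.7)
The plan is to reduce each of the three inequalities to the analogous estimate with weight $Q$ already proved in \cite{RS}. The only new ingredient needed is the pointwise comparison $Q_\mu\leq Q$ supplied by Proposition \ref{proposition:comp1}, so the argument will be essentially a one-line transfer.

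First, I would invoke Proposition \ref{proposition:comp1}, which yields $Q_\mu(y)\leq Q(y)$ for every $y\in\RR^2$ and every $\mu>0$. Taking reciprocals and integrating against $|\varepsilon|^2$ gives the continuous embedding
$$\|\varepsilon\|_{L^2_Q}^{\,2}\;=\;\int_{\RR^2}\frac{|\varepsilon|^2}{Q}\,dy\;\leq\;\int_{\RR^2}\frac{|\varepsilon|^2}{Q_\mu}\,dy\;=\;\|\varepsilon\|_{L^2_{Q_\mu}}^{\,2},$$
with embedding constant equal to $1$, in particular independent of $\mu$. Consequently, any bound of the form $\|\cdot\|\leq C\|\varepsilon\|_{L^2_Q}$ is stronger than the corresponding one with $L^2_{Q_\mu}$ on the right-hand side.

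Next, I would quote the estimates from \cite{RS}, valid for every $\varepsilon\in L^2_Q(\RR^2)$:
$$\|\nabla\phi_\varepsilon\|_{L^4(\RR^2)}\;\lesssim\;\|\varepsilon\|_{L^2_Q},$$
and, under the additional hypothesis $\int_{\RR^2}\varepsilon\,dy=0$,
$$\|\phi_\varepsilon\|_{L^\infty(\RR^2)}+\|\nabla\phi_\varepsilon\|_{L^2(\RR^2)}\;\lesssim\;\|\varepsilon\|_{L^2_Q}.$$
Combining these inequalities with the embedding from the previous paragraph yields at once the three bounds stated in the proposition, and the constants remain uniform in $\mu$ since the constant in the embedding is $1$ and the \cite{RS} constants depend only on $Q$.

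For completeness I would sketch why the \cite{RS} estimates hold. The $L^4$ bound is a Hardy--Littlewood--Sobolev / Riesz-transform argument applied to the representation $\nabla\phi_\varepsilon=\tfrac{1}{2\pi}\tfrac{y}{|y|^2}\star\varepsilon$, after controlling a suitable $L^p$ norm of $\varepsilon$ by its $L^2_Q$ norm via weighted Cauchy--Schwarz, using the algebraic decay $Q\sim\langle y\rangle^{-4}$. The $L^\infty$ and $L^2$-gradient bounds genuinely require the zero-mass condition, since in two dimensions $\phi_\varepsilon\sim-\tfrac{1}{2\pi}\log|y|\int\varepsilon$ at infinity and only the cancellation $\int\varepsilon=0$ restores enough decay to justify the identity $\int\varepsilon\,\phi_\varepsilon=\|\nabla\phi_\varepsilon\|_{L^2}^2$ together with the appropriate weighted estimate. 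There is no real obstacle in the present setting: once Proposition \ref{proposition:comp1} is recorded, the proof reduces to a direct invocation of the \cite{RS} bounds, which is precisely what the authors indicate.
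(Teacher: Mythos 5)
Your proposal is correct and follows essentially the same route as the paper: the authors also note that $Q_\mu<Q$ (Proposition \ref{proposition:comp1}) gives the embedding $L^2_{Q_\mu}(\RR^2)\hookrightarrow L^2_Q(\RR^2)$ with a $\mu$-independent constant and then simply transfer the corresponding estimates from \cite{RS}. Your additional sketch of why the \cite{RS} bounds hold goes beyond what the paper records but does not change the argument.
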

\begin{lemma}\label{lemma:poissonbound}
Let $\varepsilon\in L^2_{Q_\mu}(\RR^2)$ such that $\int_{\RR^2}\varepsilon (y)dy=\int_{\RR^2}y_i\varepsilon (y)dy=\int_{\RR^2}|y|^2\varepsilon (y)dy=0$, then
$$\|\phi_{\varepsilon}\|_{L^2}\leq C \|\varepsilon\|_{L^2_{Q_\mu}},$$
with $C$ uniform with respect to $\mu$.
\end{lemma}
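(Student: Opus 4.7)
The plan is to pass to the Fourier side, where the three moment hypotheses translate into vanishing of $\widehat\varepsilon$ to order two at the origin, and to exploit this via a two-dimensional Hardy--Rellich-type inequality. Two preliminary reductions: first, $Q_\mu\le Q$ by Proposition~\ref{proposition:comp1}, so $\|\varepsilon\|_{L^2_Q}\le \|\varepsilon\|_{L^2_{Q_\mu}}$ and it suffices to prove the bound with $Q$ in place of $Q_\mu$, with an absolute constant. Second, the elementary pointwise inequality $|y|^4 Q(y)=8|y|^4/(1+|y|^2)^2\le 8$ yields
\begin{equation*}
\int_{\RR^2}|y|^4|\varepsilon(y)|^2\,dy\le 8\|\varepsilon\|_{L^2_Q}^2,
\end{equation*}
so the problem reduces to the weighted estimate $\|\phi_\varepsilon\|_{L^2}^2\lesssim \int|y|^4|\varepsilon|^2\,dy$ under the three orthogonality assumptions.

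The three moment conditions translate into $\widehat\varepsilon(0)=0$, $\nabla\widehat\varepsilon(0)=0$, and $\Delta\widehat\varepsilon(0)=0$, since $\partial_i\widehat\varepsilon(0)=-i(\varepsilon,y_i)_{L^2}$ and $\Delta\widehat\varepsilon(0)=-(\varepsilon,|\cdot|^2)_{L^2}$. Combined with $\widehat{\phi_\varepsilon}=\widehat\varepsilon/|\xi|^2$ and $\Delta\widehat\varepsilon=-\widehat{|y|^2\varepsilon}$, Plancherel produces the two identities
\begin{equation*}
\|\phi_\varepsilon\|_{L^2}^2=(2\pi)^{-2}\int_{\RR^2}\frac{|\widehat\varepsilon(\xi)|^2}{|\xi|^4}\,d\xi,\qquad \int_{\RR^2}|\Delta\widehat\varepsilon|^2\,d\xi=(2\pi)^2\int_{\RR^2}|y|^4|\varepsilon|^2\,dy.
\end{equation*}
The heart of the argument is therefore a Hardy--Rellich-type inequality with vanishing at the origin,
\begin{equation*}
\int_{\RR^2}\frac{|v(\xi)|^2}{|\xi|^4}\,d\xi\le C\int_{\RR^2}|\Delta v|^2\,d\xi\quad \text{for } v\in H^2(\RR^2) \text{ with } v(0)=\nabla v(0)=\Delta v(0)=0,
\end{equation*}
with an absolute constant $C$.

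I would prove this inequality by projecting $v$ onto spherical harmonics (the nonzero modes contribute an extra positive angular term to $(\Delta v)^2$ and are therefore easier) and treating the radial mode directly. For radial $v$, the three vanishing conditions force $v(r)=O(r^3)$ near $0$, which legitimizes two successive integrations by parts in $I:=\int_0^\infty v^2 r^{-3}\,dr$ and yields $I=A+\int_0^\infty vv_{rr}\,r^{-1}\,dr$ with $A=\int_0^\infty v_r^2\, r^{-1}\,dr$. Cauchy--Schwarz on the last term gives the quadratic bound $I\le A+I^{1/2}B^{1/2}$ with $B=\int_0^\infty v_{rr}^2\, r\,dr$, hence $I\le 2A+B$. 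A direct computation with the radial Laplacian $\Delta v=v_{rr}+v_r/r$ produces $\int|\Delta v|^2\,d\xi=2\pi(A+B)$, so $\int|v|^2/|\xi|^4\,d\xi=2\pi I\le 4\pi(A+B)=2\int|\Delta v|^2\,d\xi$. A standard density argument with smooth compactly supported test functions having the same vanishing pattern justifies all boundary contributions.

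Chaining the pieces gives $\|\phi_\varepsilon\|_{L^2}^2\lesssim \int|y|^4|\varepsilon|^2\,dy\le 8\|\varepsilon\|_{L^2_Q}^2\le 8\|\varepsilon\|_{L^2_{Q_\mu}}^2$, with a constant independent of $\mu$, which is the claim. The main obstacle is the Hardy--Rellich step itself: the classical inequality $\int|u|^2/|x|^4\lesssim \int|\Delta u|^2$ holds in $\RR^d$ only for $d\ge 5$ and fails for $d=2$ without additional structure, so the three moment hypotheses on $\varepsilon$ are essential---they provide exactly the three vanishing conditions on $\widehat\varepsilon$ at the origin needed to compensate for the low dimension.
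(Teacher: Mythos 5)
There is a genuine gap, and it is located exactly at what you call the heart of the argument. The reduction to $\|\phi_\varepsilon\|_{L^2}^2\lesssim\int_{\RR^2}|y|^4|\varepsilon|^2dy$, i.e.\ to the two-dimensional Hardy--Rellich inequality $\int_{\RR^2}|v|^2|\xi|^{-4}d\xi\le C\int_{\RR^2}|\Delta v|^2d\xi$ for $v$ with $v(0)=\nabla v(0)=\Delta v(0)=0$, cannot work because that inequality is false, and the failure occurs precisely in the $|k|=1$ angular modes that you dismiss as ``easier''. Take $v(\xi)=\xi_1\chi(|\xi|)$ with $\chi$ smooth, $\chi\equiv1$ on $[1,R]$, supported in $[1/2,2R]$ and transitioning over scale $R$ at the outer edge. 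Then $v$ vanishes identically near the origin, so all three vanishing conditions hold; $\Delta v=0$ on $\{1\le|\xi|\le R\}$ since $\xi_1$ is harmonic, and the two transition annuli give $\|\Delta v\|_{L^2}^2=O(1)$ uniformly in $R$, while $\int|v|^2|\xi|^{-4}d\xi\ge\pi\log R\to\infty$. Mode by mode, for the harmonic $e^{ik\theta}$ the operator acts on $r^\alpha$ by the factor $\alpha^2-k^2$, and at the scale-invariant exponent $\alpha=1$ the constant in the inequality is $(1-k^2)^2$: positive for $k=0$ (so your radial integration by parts is fine) and for $|k|\ge2$, but zero for $|k|=1$. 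Translated back, $\widehat\varepsilon=v$ corresponds to a multiscale dipole $\varepsilon$ with all three moments vanishing, $\||y|^2\varepsilon\|_{L^2}=O(1)$ and $\|\phi_\varepsilon\|_{L^2}^2\gtrsim\log R$, so the reduced statement itself is false and no proof of it can be repaired. The information you lose is in the very first step: replacing the weight $1/Q_\mu\gtrsim(1+|y|^2)^2$ by $|y|^4$ discards the $L^2$ part of the norm, and for the dipole example $\|\varepsilon\|_{L^2}\sim R^2$, which is exactly what keeps the true lemma consistent; the moment conditions constrain $\widehat\varepsilon$ only at $\xi=0$ and cannot prevent a dipole profile running over many frequency scales away from the origin.

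For comparison, the paper's proof stays in physical space and keeps the full weight throughout: for $|x|\le1$ it bounds $|\phi_\varepsilon(x)|\lesssim\|\varepsilon\|_{L^2_{Q_\mu}}$ by Cauchy--Schwarz against the logarithmic kernel; for $|x|\ge1$ it uses $\int\varepsilon=0$ to write $\phi_\varepsilon(x)=-\frac{1}{2\pi}\int\log\bigl(|x-y|/|x|\bigr)\varepsilon(y)dy$, splits into $\{|y|\le|x|\}$ and $\{|y|\ge|x|\}$, Taylor-expands the logarithm in the inner region so that the conditions $\int y_i\varepsilon=\int|y|^2\varepsilon=0$ remove the slowly decaying terms, and uses $\log(1+t)\le t^{1-\eta}$ in the outer region, always paying with moments of $Q_\mu$ that the weight $1/Q_\mu$ affords. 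If you want to salvage a Fourier-side argument you would have to prove the inequality with the full weight retained, e.g.\ $\int|\widehat\varepsilon|^2|\xi|^{-4}d\xi\lesssim\|(1-\Delta_\xi)\widehat\varepsilon\|_{L^2}^2$ under the vanishing conditions, and the $|k|=1$ modes would have to be controlled using those lower-order terms rather than by the vanishing at the origin alone.
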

\begin{proof}
For $|x|\leq 1$ we have
\begin{align}
|\phi_{\varepsilon}(x)|&\leq \int_{\RR^2}|\log(|x-y|)||\varepsilon(y)| dy\nonumber\\
&\lesssim \|\varepsilon\|_{L^2_{Q_\mu}},
\end{align}
which implies
$$\int_{|x|\leq 1}|\phi_{\varepsilon}(x)|^2dx\lesssim \|\varepsilon\|_{L^2_{Q_\mu}}^2.$$
For $|x|\geq 1$ we have 
\begin{align}
\Big|\phi_{\varepsilon}(x)-\frac{\log(|x|)}{2\pi}\int_{\RR^2}\varepsilon(y)dy\Big|=\Big|\int_{\RR^2}\log\Big(\frac{|x-y|}{|x|}\Big)\varepsilon(y)dy\Big|. 
\end{align}
We separate the integral in teo pieces.
Indeed, we look first at $\{|y|\leq|x|\}$,
\begin{align}
\Big|\int_{\{|y|\leq|x|\}}\log\Big(\frac{|x-y|}{|x|}\Big)\varepsilon(y)dy\Big|&=\frac{1}{2}\Big|\int_{\{|y|\leq|x|\}}\log\Big(\frac{|x-y|^2}{|x|^2}\Big)\varepsilon(y)dy\Big|\nonumber\\
& =\Big|\int_{\{|y|\leq|x|\}}\log\Big(1+\frac{|y|^2}{|x|^2}-\frac{2x\cdot y}{|x|^2}\Big)\varepsilon(y)dy\Big|\nonumber\\
&=\Big|\int_{\{|y|\leq|x|\}}\Big(\frac{|y|^2}{|x|^2}-\frac{2x\cdot y}{|x|^2}+O\Big(\frac{|y|^{2-\eta}}{|x|^{2-\eta}}\Big)\Big)\varepsilon(y)dy\Big|,
\end{align}
with $\eta$ small.
Hence, by using $\int_{\RR^2}y_i\varepsilon (y)dy=\int_{\RR^2}|y|^2\varepsilon (y)dy=0$ we deduce 
\begin{align}
\Big|\int_{\{|y|\leq|x|\}}\log\Big(\frac{|x-y|}{|x|}\Big)\varepsilon(y)dy\Big|\lesssim \int_{\{|y|\leq|x|\}}\frac{|y|^{2-\eta}}{|x|^{2-\eta}}|\varepsilon(y)|dy\lesssim \frac{\|\varepsilon\|_{L^2_{Q_\mu}}}{|x|^{2-\eta}}.
\end{align}
Now we look the other part of the integral,
\begin{align}
\Big|\int_{\{|y|\geq|x|\}}\log\Big(\frac{|x-y|}{|x|}\Big)\varepsilon(y)dy\Big|
\end{align}
Since, $|x-y|^2\leq2(|x|^2+|y|^2)$ and $\log(1+t)\leq t^{1-\eta}$ with $\eta>0$ sufficiently small, it follows
\begin{align}
\Big|\int_{\{|y|\geq|x|\}}\log\Big(\frac{|x-y|}{|x|}\Big)\varepsilon(y)dy\Big|&=\frac{1}{2}\Big|\int_{\{|y|\geq|x|\}}\log\Big(\frac{|x-y|^2}{|x|^2}\Big)\varepsilon(y)dy\Big|\nonumber\\
& \leq \frac{1}{2}\Big|\int_{\{|y|\geq|x|\}}\log\Big(2+2\frac{|y|^2}{|x|^2}\Big)\varepsilon(y)dy\Big|\nonumber\\
&\leq\frac{1}{2}\Big|\int_{\{|y|\geq|x|\}}\frac{|y|^{2-2\eta}}{|x|^{2-2\eta}}\varepsilon(y)dy\Big|.
\end{align}
Hence, we deduce
\begin{align}
\int_{|x|\geq 1}|\phi_{\varepsilon}(x)|^2dx\lesssim \|\varepsilon\|^2_{L^2_{Q_\mu}},
\end{align}
which concludes the proof.
\end{proof}


\begin{thebibliography}{BKLN06b}

\bibitem[BCC08]{BlanchetCalvezCarrillo08}
A.~Blanchet, V.~Calvez, and J.A. Carrillo.
\newblock Convergence of the mass-transport steepest descent scheme for
  subcritical {Patlak-Keller-Segel} model.
\newblock 46:691--721, 2008.

\bibitem[BCC12]{BCC}
Adrien Blanchet, Eric~A. Carlen, and Jos{\'e}~A. Carrillo.
\newblock Functional inequalities, thick tails and asymptotics for the critical
  mass {P}atlak-{K}eller-{S}egel model.
\newblock {\em J. Funct. Anal.}, 262(5):2142--2230, 2012.

\bibitem[BCCar]{BlanchetCarlenCarrillo10}
A.~Blanchet, E.~Carlen, and J.A. Carrillo.
\newblock Functional inequalities, thick tails and asymptotics for the critical
  mass {Patlak-Keller-Segel} model.
\newblock {\em J. Func. Anal.}, to appear.

\bibitem[BCM08a]{Blanchet08}
A.~Blanchet, J.A. Carrillo, and N.~Masmoudi.
\newblock Infinite time aggregation for the critical {Patlak-Keller-Segel}
  model in {$\mathbb R^2$}.
\newblock 61:1449--1481, 2008.

\bibitem[BCM08b]{BCM}
Adrien Blanchet, Jos{\'e}~A. Carrillo, and Nader Masmoudi.
\newblock Infinite time aggregation for the critical {P}atlak-{K}eller-{S}egel
  model in {$\Bbb R^2$}.
\newblock {\em Comm. Pure Appl. Math.}, 61(10):1449--1481, 2008.

\bibitem[BDEF10]{BDEF}
Adrien Blanchet, Jean Dolbeault, Miguel Escobedo, and Javier Fern{\'a}ndez.
\newblock Asymptotic behaviour for small mass in the two-dimensional
  parabolic-elliptic {K}eller-{S}egel model.
\newblock {\em J. Math. Anal. Appl.}, 361(2):533--542, 2010.

\bibitem[BDP06a]{BlanchetEJDE06}
A.~Blanchet, J.~Dolbeault, and B.~Perthame.
\newblock Two-dimensional {Keller-Segel} model: Optimal critical mass and
  qualitative properties of the solutions.
\newblock 2006(44):1--32, 2006.

\bibitem[BDP06b]{BDP}
Adrien Blanchet, Jean Dolbeault, and Beno{\^{\i}}t Perthame.
\newblock Two-dimensional {K}eller-{S}egel model: optimal critical mass and
  qualitative properties of the solutions.
\newblock {\em Electron. J. Differential Equations}, pages No. 44, 32 pp.
  (electronic), 2006.

\bibitem[Bil95]{Biler95}
P.~Biler.
\newblock The {Cauchy} problem and self-similar solutions for a nonlinear
  parabolic equation.
\newblock {\em Studia Math.}, 114(2):181--192, 1995.

\bibitem[BKLN06a]{Biler06}
P.~Biler, G.~Karch, P.~Lauren\c{c}ot, and T.~Nadzieja.
\newblock The $8\pi$-problem for radially symmetric solutions of a chemotaxis
  model in the plane.
\newblock {\em Math. Meth. Appl. Sci}, 29:1563--1583, 2006.

\bibitem[BKLN06b]{BKLN}
Piotr Biler, Grzegorz Karch, Philippe Lauren{\c{c}}ot, and Tadeusz Nadzieja.
\newblock The {$8\pi$}-problem for radially symmetric solutions of a chemotaxis
  model in the plane.
\newblock {\em Math. Methods Appl. Sci.}, 29(13):1563--1583, 2006.

\bibitem[BM14]{BM}
Jacob Bedrossian and Nader Masmoudi.
\newblock Existence, uniqueness and {L}ipschitz dependence for
  {P}atlak-{K}eller-{S}egel and {N}avier-{S}tokes in {$\Bbb {R}^2$} with
  measure-valued initial data.
\newblock {\em Arch. Ration. Mech. Anal.}, 214(3):717--801, 2014.

\bibitem[CD14]{CD}
Juan~F. Campos and Jean Dolbeault.
\newblock Asymptotic estimates for the parabolic-elliptic {K}eller-{S}egel
  model in the plane.
\newblock {\em Comm. Partial Differential Equations}, 39(5):806--841, 2014.

\bibitem[CP81]{ChildressPercus81}
S.~Childress and J.K. Percus.
\newblock Nonlinear aspects of chemotaxis.
\newblock {\em Math. Biosci.}, 56:217--237, 1981.

\bibitem[CS06]{CS}
Pierre-Henri Chavanis and Cl{\'e}ment Sire.
\newblock Virial theorem and dynamical evolution of self-gravitating {B}rownian
  particles in an unbounded domain. {I}. {O}verdamped models.
\newblock {\em Phys. Rev. E (3)}, 73(6):066103, 16, 2006.

\bibitem[Hor03]{Hortsmann}
D.~Horstmann.
\newblock {From 1970 until present: the Keller-Segel model in chemotaxis and
  its consequences}.
\newblock {\em I, Jahresber. Deutsch. Math.-Verein}, 105(3):103--165, 2003.

\bibitem[HP09]{HandP}
T.~Hillen and K.~J. Painter.
\newblock A user's guide to {PDE} models for chemotaxis.
\newblock {\em J. Math. Biol.}, 58(1-2):183--217, 2009.

\bibitem[HV96a]{HerreroVelazquez96}
M.~Herrero and J.J.L. Vel\'azquez.
\newblock Singularity patterns in a chemotaxis model.
\newblock {\em Math. Ann.}, 306:583--623, 1996.

\bibitem[HV96b]{HV}
Miguel~A. Herrero and Juan J.~L. Vel{\'a}zquez.
\newblock Singularity patterns in a chemotaxis model.
\newblock {\em Math. Ann.}, 306(3):583--623, 1996.

\bibitem[JL92a]{JagerLuckhaus92}
W.~J\"ager and S.~Luckhaus.
\newblock On explosions of solutions to a system of partial differential
  equations modelling chemotaxis.
\newblock 329(2):819--824, 1992.

\bibitem[JL92b]{JL}
W.~J{\"a}ger and S.~Luckhaus.
\newblock On explosions of solutions to a system of partial differential
  equations modelling chemotaxis.
\newblock {\em Trans. Amer. Math. Soc.}, 329(2):819--824, 1992.

\bibitem[KS71]{KS}
E.~F. Keller and L.A. Segel.
\newblock Model for chemotaxis.
\newblock {\em J. Theor. Biol.}, 30:225--234, 1971.

\bibitem[MM01]{MM}
Y.~Martel and F.~Merle.
\newblock Instability of solitons for the critical generalized {K}orteweg-de
  {V}ries equation.
\newblock {\em Geom. Funct. Anal.}, 11(1):74--123, 2001.

\bibitem[MR03]{MR2}
F.~Merle and P.~Raphael.
\newblock Sharp upper bound on the blow-up rate for the critical nonlinear
  {S}chr\"odinger equation.
\newblock {\em Geom. Funct. Anal.}, 13(3):591--642, 2003.

\bibitem[MR05]{MR1}
Frank Merle and Pierre Raphael.
\newblock The blow-up dynamic and upper bound on the blow-up rate for critical
  nonlinear {S}chr\"odinger equation.
\newblock {\em Ann. of Math. (2)}, 161(1):157--222, 2005.

\bibitem[MRR11]{MRR}
F.~Merle, P.~Rapha{\"e}l, and I.~Rodnianski.
\newblock Blow up dynamics for smooth equivariant solutions to the energy
  critical {S}chr\"odinger map.
\newblock {\em C. R. Math. Acad. Sci. Paris}, 349(5-6):279--283, 2011.

\bibitem[Nag95]{Nagai95}
T.~Nagai.
\newblock Blow-up of radially symmetric solutions to a chemotaxis system.
\newblock {\em Adv. Math. Sci. Appl.}, 5(2):581--601, 1995.

\bibitem[Nai01]{N}
Y{\=u}ki Naito.
\newblock Symmetry results for semilinear elliptic equations in {${\bf R}^2$}.
\newblock In {\em Proceedings of the {T}hird {W}orld {C}ongress of {N}onlinear
  {A}nalysts, {P}art 6 ({C}atania, 2000)}, volume~47, pages 3661--3670, 2001.

\bibitem[Pat53]{Patlak}
C.~S. Patlak.
\newblock Random walk with persistence and external bias.
\newblock {\em Bull. Math. Biophys.}, 15:311--338, 1953.

\bibitem[RS13]{RS2}
Pierre Rapha{\"e}l and Remi Schweyer.
\newblock Stable blowup dynamics for the 1-corotational energy critical
  harmonic heat flow.
\newblock {\em Comm. Pure Appl. Math.}, 66(3):414--480, 2013.

\bibitem[RS14]{RS}
P.~Rapha{\"e}l and R.~Schweyer.
\newblock On the stability of critical chemotactic aggregation.
\newblock {\em Math. Ann.}, 359(1-2):267--377, 2014.

\bibitem[Sen07]{Senba07}
T.~Senba.
\newblock Type {II} blowup of solutions to a simplified {Keller-Segel} system
  in two dimensional domains.
\newblock {\em Nonlin. Anal.}, 66:1817--1839, 2007.

\bibitem[SS02]{Senba02}
T.~Senba and T.~Suzuki.
\newblock Weak solutions to a parabolic-elliptic system of chemotaxis.
\newblock {\em J. Func. Anal.}, 191:17--51, 2002.

\end{thebibliography}

\def\cprime{$'$}

\end{document}